\theoremstyle{plain}
\newtheorem{theorem}{Theorem}
\newtheorem{assertion}[theorem]{Assertion}
\newtheorem{proposition}[theorem]{Proposition}
\theoremstyle{definition}
\newtheorem{definition}[theorem]{Definition}
\newtheorem{conjecture}[theorem]{Conjecture}
\newtheorem{corollary}[theorem]{Corollary}
\theoremstyle{remark}
\newtheorem{remark}[theorem]{Remark}
\newtheorem{example}[theorem]{Example}
\numberwithin{equation}{section}
\numberwithin{theorem}{section}
\renewcommand{\mathfrak}{\textsc}
\renewcommand{\mathbf}{\bm}
\renewcommand{\textit}{\textsf}
\numberwithin{equation}{section}
\numberwithin{theorem}{section}
\def\thetheorem{{\color{Brown}\bf\arabic{section}.\arabic{theorem}}}
\def\theequation{{\color{Magenta}{\sf\arabic{section}.\arabic{equation}}}}
\author{Steven Duplij}
\date{\textbf{July 3, 2017}}
\address{\noindent Mathematisches Institute\\
Universit\"at M\"unster\newline
\mbox{} \hskip 10pt Einsteinstr. 62\\
D-48149 M\"unster,
Deutschland}
\email{duplijs@math.uni-muenster.de, sduplij@gmail.com}
\urladdr{http://homepages.spa.umn.edu/\~{}duplij}
\title[Polyadic integer numbers and finite  $\left( \MakeLowercase{m,n} \right)$-fields
]{\textbf{Polyadic integer numbers and finite  $\left( \MakeLowercase{m,n} \right)$-fields
}}
\subjclass[2010]{16T05, 16T25, 17A42, 20N15, 20F29, 20G05, 20G42, 57T05}
\begin{document}

\mbox{}

\begin{abstract}
\noindent 
The polyadic integer numbers, which form a polyadic ring, 
are representatives of a fixed congruence class. 
The basics of polyadic arithmetic are presented: prime polyadic numbers, 
the polyadic Euler function, polyadic division with a remainder, etc. are introduced. 
Secondary congruence classes of polyadic integer numbers, which become 
ordinary residue classes in the "binary limit", and the corresponding finite polyadic rings are defined. 
Polyadic versions of (prime) finite fields are introduced. 
These can be zeroless, zeroless and nonunital, or have several units; 
it is even possible for all of their elements to be units. 
There exist non-isomorphic finite polyadic fields of the same arity shape and order. 
None of the above situations is possible in the binary case. It is conjectured that 
a finite polyadic field should contain a certain canonical prime polyadic field, defined here, 
as a minimal finite subfield, which can be considered as a polyadic analogue of $GF\left( p \right)$.

\end{abstract}
\maketitle

\thispagestyle{empty}

\bigskip

\tableofcontents

\newpage

\mbox{}

\bigskip

\epigraph{
\emph{
\noindent In Galois Fields, full of flowers\\
primitive elements dance for hours\\
climbing sequentially through the trees\\
and shouting occasional parities.}}{S.B. Weinstein,
IEEE Trans. 1971}

\mbox{}

\bigskip

\section*{\textsc{Introduction}}

The theory of finite fields \cite{lid/nie} plays a very important role. From
one side, it acts as a \textquotedblleft gluing particle\textquotedblright%
\ connecting algebra, combinatorics and number theory (see, e.g.
\cite{mul/pan}), and from another it has numerous applications to
\textquotedblleft reality\textquotedblright: in coding theory, cryptography
and computer science \cite{men/oor/van}. Therefore, any generalization or
variation of its initial statements can lead to interesting and useful
consequences for both of the above. There are two principal peculiarities of
finite fields: 1) Uniqueness - they can have only special numbers of elements
(the order is any power of a prime integer $p^{r}$) and this fully determines
them, in that all finite fields of the same order are isomorphic; 2) Existence
of their \textquotedblleft minimal\textquotedblright\ (prime) finite subfield
of order $p$, which is isomorphic to the congruence class of integers
$\mathbb{Z}\diagup p\mathbb{Z}$. Investigation of the latter is a bridge to
the study of all finite fields, since they act as building blocks of the
extended (that is, all) finite fields.

We propose a special - polyadic - version of the (prime) finite fields in such
a way that, instead of the binary ring of integers $\mathbb{Z}$, we consider a
polyadic ring. The concept of the polyadic integer numbers $\mathbb{Z}%
_{\left(  m,n\right)  }$ as representatives of a fixed congruence class, which
form the $\left(  m,n\right)  $-ring (with $m$-ary addition and $n$-ary
multiplication), was introduced in \cite{dup2017}. Here we analyze
$\mathbb{Z}_{\left(  m,n\right)  }$ in more detail, by developing elements of
a polyadic analog of binary arithmetic: polyadic prime numbers, polyadic
division with a remainder, the polyadic Euler totient function, etc. ... It is
important to stress that the polyadic integer numbers are special variables
(we use superscripts for them) which in general have no connection with
ordinary integers (despite the similar notation used in computations), because
the former satisfy different relations, and coincide with the latter in the
binary case only. Next we will define new secondary congruence classes and
the corresponding finite $\left(  m,n\right)  $-rings $\mathbb{Z}_{\left(
m,n\right)  }\left(  q\right)  $ of polyadic integer numbers, which give
$\mathbb{Z}\diagup q\mathbb{Z}$ in the ``binary limit''. The conditions under
which these rings become fields are given, and the corresponding
\textquotedblleft abstract\textquotedblright\ polyadic fields are defined and
classified using their idempotence polyadic order. They have unusual
properties, and can be zeroless, zeroless-nonunital or have several units, and
it is even possible for all elements to be units. The subgroup structure of
their (cyclic) multiplicative finite $n$-ary group is analyzed in detail. For
some zeroless finite polyadic fields their multiplicative $n$-ary group is a
non-intersecting union of subgroups. It is shown that there exist
non-isomorphic finite polyadic fields of the same arity shape and order. None
of the above situations is possible in the binary case.

Some general properties of polyadic rings and fields were given in
\cite{cro2,lee/but,pop/pop02,dup/wer}, but their concrete examples using
integers differ considerably from our construction here, and the latter leads
to so called nonderived (proper) versions which have not been considered before.

We conjecture that any $\left(  m,n\right)  $-field with $m>n$ contains as a subfield one
of the prime polyadic fields constructed here, which can be considered as a
polyadic analog of $GF\left(  p\right)  $.

\section{\textsc{Preliminaries}}

We use the notations and definitions from \cite{dup2012,dup2017} (see, also,
references therein). We recall (only for self-consistency) some important
elements and facts about polyadic rings, which will be needed below.

Informally, a polyadic $\left(  m,n\right)  $-ring is $\mathcal{R}%
_{m,n}=\left\langle R\mid\mathbf{\nu}_{m},\mathbf{\mu}_{n}\right\rangle $,
where $R$ is a set, equipped with $m$-ary addition $\mathbf{\nu}_{m}%
:R^{m}\rightarrow R$ and $n$-ary multiplication $\mathbf{\mu}_{n}%
:R^{n}\rightarrow R$ which are connected by the polyadic distributive law,
such that $\left\langle R\mid\mathbf{\nu}_{m}\right\rangle $ is a commutative
$m$-ary group and $\left\langle R\mid\mathbf{\mu}_{n}\right\rangle $ is a
semigroup. A \textit{commutative (cancellative) polyadic ring} has a
commutative (cancellative) $n$-ary multiplication $\mathbf{\mu}_{n}$. A
polyadic ring is called \textit{derived}, if $\mathbf{\nu}_{m}$ and
$\mathbf{\mu}_{n}$ are equivalent to a repetition of the binary addition and
multiplication, while $\left\langle R\mid\mathbf{+}\right\rangle $ and
$\left\langle R\mid\mathbf{\cdot}\right\rangle $ are commutative (binary)
group and semigroup respectively. If only one operation $\mathbf{\nu}_{m}$ (or
$\mathbf{\mu}_{n}$) has this property, we call such a $\mathcal{R}_{m,n}$
\textit{additively} (or \textit{multiplicatively}) derived
(\textit{half-derived}).

In distinction to binary rings, an $n$-\textit{admissible} \textquotedblleft
length of word $\left(  \mathbf{x}\right)  $\textquotedblright\ should be
congruent to $1\operatorname{mod}\left(  n-1\right)  $, containing $\ell_{\mu
}\left(  n-1\right)  +1$ elements ($\ell_{\mu}$ is a \textquotedblleft number
of multiplications\textquotedblright) $\mathbf{\mu}_{n}^{\left(  \ell_{\mu
}\right)  }\left[  \mathbf{x}\right]  $ ($\mathbf{x}\in R^{\ell_{\mu}\left(
n-1\right)  +1}$), so called $\left(  \ell_{\mu}\left(  n-1\right)  +1\right)
$-ads, or \textit{polyads}. An $m$-\textit{admissible} \textquotedblleft
quantity of words $\left(  \mathbf{y}\right)  $\textquotedblright\ in a
polyadic \textquotedblleft sum\textquotedblright\ has to be congruent to
$1\operatorname{mod}\left(  m-1\right)  $, i.e. consisting of $\ell_{\nu
}\left(  m-1\right)  +1$ summands ($\ell_{\nu}$ is a \textquotedblleft number
of additions\textquotedblright) $\mathbf{\nu}_{m}^{\left(  \ell_{\nu}\right)
}\left[  \mathbf{y}\right]  $ ($\mathbf{y}\in R^{\ell_{\nu}\left(  m-1\right)
+1}$). Therefore, a straightforward \textquotedblleft
polyadization\textquotedblright\ of any binary expression ($m=n=2$) can be
introduced as follows: substitute the number of multipliers $\ell_{\mu
}+1\rightarrow\ell_{\mu}\left(  n-1\right)  +1$ and number of summands
$\ell_{\nu}+1\rightarrow\ell_{\nu}\left(  m-1\right)  +1$, respectively.

An example of \textquotedblleft trivial polyadization\textquotedblright\ is
the simplest $\left(  m,n\right)  $-ring derived from the ring of integers
$\mathbb{Z}$ as the set of $\ell_{\nu}\left(  m-1\right)  +1$
\textquotedblleft sums\textquotedblright\ of $n$-admissible $\left(  \ell
_{\mu}\left(  n-1\right)  +1\right)  $-ads $\left(  \mathbf{x}\right)  $,
where $\mathbf{x}\in\mathbb{Z}^{\ell_{\mu}\left(  n-1\right)  +1}$
\cite{lee/but}.

The additive $m$-ary \textit{ polyadic power} and the multiplicative
$n$-\textit{ary polyadic power} are defined by (inside polyadic products we
denote repeated entries by $\overset{k}{\overbrace{x,\ldots,x}}$ as $x^{k}$)%
\begin{equation}
x^{\left\langle \ell_{\nu}\right\rangle _{+m}}=\mathbf{\nu}_{m}^{\left(
\ell_{\nu}\right)  }\left[  x^{\ell_{\nu}\left(  m-1\right)  +1}\right]
,\ \ \ \ x^{\left\langle \ell_{\mu}\right\rangle _{\times n}}=\mathbf{\mu}%
_{n}^{\left(  \ell_{\mu}\right)  }\left[  x^{\ell_{\mu}\left(  n-1\right)
+1}\right]  ,\ \ x\in R, \label{xp}%
\end{equation}
such that the polyadic powers and ordinary powers differ by one:
$x^{\left\langle \ell_{\nu}\right\rangle _{+2}}=x^{\ell_{\nu}+1}$,
$x^{\left\langle \ell_{\mu}\right\rangle _{\times2}}=x^{\ell_{\mu}+1}$.

The \textit{polyadic} \textit{idempotents} in $\mathcal{R}_{m,n}$ satisfy%
\begin{equation}
x^{\left\langle \ell_{\nu}\right\rangle _{+m}}=x,\ \ \ \ x^{\left\langle
\ell_{\mu}\right\rangle _{\times n}}=x, \label{xx}%
\end{equation}
and are called the \textit{additive }$\ell_{\nu}$\textit{-idempotent} and the
\textit{multiplicative }$\ell_{\mu}$\textit{-idempotent}, respectively.

The additive $1$-idempotent, the \textit{zero} $z\in R$, is (if it exists)
defined by%
\begin{equation}
\mathbf{\nu}_{m}\left[  \mathbf{x},z\right]  =z,\ \ \ \forall\mathbf{x}\in
R^{m-1}. \label{z}%
\end{equation}

An element $x\in R$ is called (polyadic) \textit{nilpotent}, if
$x^{\left\langle 1\right\rangle _{+m}}=z$, and all higher powers of a
nilpotent element are nilpotent, as follows from (\ref{z}) and associativity.

The \textit{unit} $e$ of $\mathcal{R}_{m,n}$ is a multiplicative
$1$-idempotent which is defined (if it exists) as
\begin{equation}
\mathbf{\mu}_{n}\left[  e^{n-1},x\right]  =x,\ \ \ \forall x\in R, \label{e}%
\end{equation}
where (in case of a noncommutative polyadic ring) $x$ can be on any place. An
element $x\in R$ is called a (polyadic) $\ell_{\mu}$\textit{-reflection}, if
$\ x^{\left\langle \ell_{\mu}\right\rangle _{\times n}}=e$ (multiplicative
analog of a nilpotent element).

Polyadic rings with zero or unit(s) are called additively or multiplicatively
\textsl{half-derived}, and derived rings have a zero and unit(s)
simultaneously. There are polyadic rings which have no unit and no zero, or
with several units and no zero, or where all elements are units. But if a zero
exists, it is unique. If a polyadic ring contains no unit and no zero, we call
it a \textit{zeroless nonunital polyadic ring}. It is obvious that zeroless
nonunital rings can contain other idempotents of higher polyadic powers.

So, in polyadic rings (including the zeroless nonunital ones) invertibility
can be governed in a way which is not connected with unit and zero elements.
For a fixed element $x\in R$ its \textit{additive} \textit{querelement}
$\tilde{x}$ and \textit{multiplicative} \textit{querelement} $\bar{x}$ are
defined by%
\begin{equation}
\mathbf{\nu}_{m}\left[  x^{m-1},\tilde{x}\right]  =x,\ \ \ \ \ \ \mathbf{\mu
}_{n}\left[  x^{n-1},\bar{x}\right]  =x, \label{nm}%
\end{equation}
where in the second equation, if the $n$-ary multiplication $\mathbf{\mu}_{n}$
is noncommutative, $\bar{x}$ can be on any place. Because $\left\langle
R\mid\mathbf{\nu}_{m}\right\rangle $ is a commutative group, each $x\in R$ has
its additive querelement $\tilde{x}$ (and is \textit{querable} or
\textquotedblleft polyadically invertible\textquotedblright). The $n$-ary
semigroup $\left\langle R\mid\mathbf{\mu}_{n}\right\rangle $ can have no
multiplicatively querable elements at all. However, if every $x\in R$ has its
unique querelement, then $\left\langle R\mid\mathbf{\mu}_{n}\right\rangle $ is
an $n$-ary group. Obviously, that $n$-ary group cannot have nilpotent
elements, but can have $\ell_{\mu}$-reflections. Denote $R^{\ast}%
=R\setminus\left\{  z\right\}  $, if the zero $z$ exists. If $\left\langle
R^{\ast}\mid\mathbf{\mu}_{n}\right\rangle $ is the $n$-ary group, then
$\mathcal{R}_{m,n}$ is a $\left(  m,n\right)  $-\textit{division}
\textit{ring}.

\begin{definition}
\label{def-fmn}A commutative $\left(  m,n\right)  $-division ring
$\mathcal{R}_{m,n}$ is a $\left(  m,n\right)  $-\textit{field }$\mathcal{F}%
_{m,n}$.
\end{definition}

The simplest example of a $\left(  m,n\right)  $-field derived from
$\mathbb{R}$ is the set of $\ell_{\nu}\left(  m-1\right)  +1$
\textquotedblleft sums\textquotedblright\ of admissible $\left(  \ell_{\mu
}\left(  n-1\right)  +1\right)  $-ads $\left(  \mathbf{x}\right)  $, where
$\mathbf{x}\in\mathbb{R}^{\ell_{\mu}\left(  n-1\right)  +1}$. Some
\textsl{nonderived} $\left(  m,n\right)  $-fields are in

\begin{example}
\textbf{a)} The set $i\mathbb{R}$ with $i^{2}=-1$ is a $\left(  2,3\right)
$-field with a zero and no unit (operations are made in $\mathbb{C}$), but the
multiplicative querelement of $ix$ is $-i\diagup x$ ($x\neq0$).

\textbf{b}) The set of fractions $\left\{  ix/y\mid x,y\in\mathbb{Z}%
^{odd},\ i^{2}=-1\right\}  $ is a $\left(  3,3\right)  $-field with no zero
and no unit (operations are in $\mathbb{C}$), while the additive and
multiplicative querelements of $ix/y$ are $-ix/y$ and $-iy/x$, respectively.

\textbf{c)} The set of antidiagonal $2\times2$ matrices over $\mathbb{R}$ is a
$\left(  2,3\right)  $-field with zero $z=\left(
\begin{array}
[c]{cc}%
0 & 0\\
0 & 0
\end{array}
\right)  $ and \textsl{two} units $e=\pm\left(
\begin{array}
[c]{cc}%
0 & 1\\
1 & 0
\end{array}
\right)  $, but the unique querelement of $\left(
\begin{array}
[c]{cc}%
0 & x\\
y & 0
\end{array}
\right)  $ is $\left(
\begin{array}
[c]{cc}%
0 & 1/y\\
1/x & 0
\end{array}
\right)  $.
\end{example}

\section{\textsc{Ring of polyadic integer numbers}}

Recall the notion of the ring of polyadic integer numbers $\mathbb{Z}_{\left(
m,n\right)  }$ which was introduced in \cite{dup2017}, where its difference
from the $\left(  m,n\right)  $-ring of integers from \cite{lee/but} was outlined.

Let us consider a congruence class (residue class) of an integer $a$ modulo
$b$%
\begin{equation}
\left[  \left[  a\right]  \right]  _{b}=\left\{  \left\{  a+bk\right\}  \mid
k\in\mathbb{Z},\ \ a\in\mathbb{Z}_{+},\ b\in\mathbb{N},\ 0\leq a\leq
b-1\right\}  . \label{ab}%
\end{equation}
We denote a representative element by $x_{k}=x_{k}^{\left[  a,b\right]
}=a+bk$, where obviously $\left\{  x_{k}\right\}  $ is an infinite set.

\subsection{External and internal operations for congruence classes}

Informally, there are two ways to equip (\ref{ab}) with operations:

\begin{enumerate}
\item The \textquotedblleft External\textquotedblright\ way is to define
(\textsl{binary}) operations between the congruence classes. Let us define on
the finite underlying set of $b$ congruence classes $\left\{  \left[  \left[
a\right]  \right]  _{b}\right\}  $, $a=0,1,\ldots,b-1$ the following new
binary operations (here, if $b$ is fixed, and we denote the binary class
representative by an integer with \textsl{one} prime $\left[  \left[
a\right]  \right]  _{b}\equiv a^{\prime}$, as well as the corresponding binary
operations $+^{\prime},\cdot^{\prime}$ between classes)%
\begin{align}
a_{1}^{\prime}+^{\prime}a_{2}^{\prime}  &  =\left(  a_{1}+a_{2}\right)
^{\prime},\label{a1}\\
a_{1}^{\prime}\cdot^{\prime}a_{2}^{\prime}  &  =\left(  a_{1}a_{2}\right)
^{\prime}. \label{a2}%
\end{align}
Then, the binary \textit{residue class ring} is defined by%
\begin{equation}
\mathbb{Z}\diagup b\mathbb{Z=}\left\{  \left\{  a^{\prime}\right\}
\mid+^{\prime},\cdot^{\prime},0^{\prime},1^{\prime}\right\}  . \label{zb}%
\end{equation}
In the case of prime $b=p$, the ring $\mathbb{Z}\diagup p\mathbb{Z}$ becomes a
\textsl{binary} finite field having $p$ elements.

\item The \textquotedblleft Internal\textquotedblright\ way is to introduce
(\textsl{polyadic}) operations inside a given class $\left[  \left[  a\right]
\right]  _{b}$ (with \textsl{both} $a$ and $b$ fixed). We introduce the
commutative $m$-ary addition and commutative $n$-ary multiplication of
representatives $x_{k_{i}}$ of the fixed congruence class by%
\begin{align}
\nu_{m}\left[  x_{k_{1}},x_{k_{2}},\ldots,x_{k_{m}}\right]   &  =x_{k_{1}%
}+x_{k_{2}}+\ldots+x_{k_{m}},\label{nu}\\
\mu_{n}\left[  x_{k_{1}},x_{k_{2}},\ldots,x_{k_{n}}\right]   &  =x_{k_{1}%
}x_{k_{2}}\ldots x_{k_{n}},\ \ \ x_{k_{i}}\in\left[  \left[  a\right]
\right]  _{b},\ k_{i}\in\mathbb{Z}. \label{mu}%
\end{align}

In general, the binary sums $x_{k_{1}}+x_{k_{2}}$ and products $x_{k_{1}%
}x_{k_{2}}$ are not in $\left[  \left[  a\right]  \right]  _{b}$.

\begin{proposition}
[\cite{dup2017}]The polyadic operations $\nu_{m}$ and $\mu_{n}$ become closed
in $\left[  \left[  a\right]  \right]  _{b}$, if the arities $\left(
m,n\right)  $ have the \textsl{minimal} values satisfying%
\begin{align}
ma  &  \equiv a\left(  \operatorname{mod}b\right)  ,\label{maa}\\
a^{n}  &  \equiv a\left(  \operatorname{mod}b\right)  . \label{ana}%
\end{align}

\end{proposition}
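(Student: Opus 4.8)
The plan is to determine precisely when the binary sum of two class representatives, and separately when the binary product, can be forced back into the class $\left[\left[a\right]\right]_b$ by choosing the arity, and then to read off the minimal arities from these congruence constraints. The key observation is that every representative has the form $x_k=a+bk$, so all arithmetic reduces modulo $b$ to arithmetic on the single residue $a$; the elements $k_i\in\mathbb{Z}$ are free parameters that do not affect the residue class of the result.

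First I would treat addition. For an $m$-ary sum $\nu_m\left[x_{k_1},\ldots,x_{k_m}\right]=\sum_{i=1}^m(a+bk_i)=ma+b\sum_i k_i$, the term $b\sum_i k_i$ is automatically divisible by $b$, so the sum lies in $\left[\left[a\right]\right]_b$ if and only if $ma\equiv a\ (\operatorname{mod} b)$, which is exactly (\ref{maa}). The point is that closure is independent of the $k_i$, so the condition is purely on $a$, $b$, and $m$; one then takes the smallest $m\geq 2$ (equivalently smallest $m$ with the $m$-admissible shape) satisfying this to obtain the minimal addition arity.

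Next I would treat multiplication. Expanding $\mu_n\left[x_{k_1},\ldots,x_{k_n}\right]=\prod_{i=1}^n(a+bk_i)$ and reducing modulo $b$, every term containing at least one factor of $b$ vanishes, leaving $a^n\ (\operatorname{mod} b)$. Hence the product lies in $\left[\left[a\right]\right]_b$ if and only if $a^n\equiv a\ (\operatorname{mod} b)$, which is (\ref{ana}); again this is independent of the $k_i$, and the minimal $n$ is the least arity making this hold. I would also note that closure of these two binary-reduced conditions guarantees closure of the full $\ell_\mu$- and $\ell_\nu$-fold iterated operations, since admissibility of word length is built into the polyadic structure and associativity propagates the single-step closure.

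The main obstacle, though a mild one, is justifying that the \emph{minimal} arities exist and are well-defined: one must argue that the set of $m$ satisfying (\ref{maa}) (and of $n$ satisfying (\ref{ana})) is nonempty and bounded below, so that a least element exists. Nonemptiness follows because $m=b+1$ always satisfies $ma=ba+a\equiv a$, and similarly a suitable $n$ exists by finiteness of the residues modulo $b$ together with the pigeonhole principle applied to the powers $a,a^2,a^3,\ldots$; the sequence of powers is eventually periodic mod $b$, guaranteeing some $n>1$ with $a^n\equiv a$. I expect the routine verification that the minimal $(m,n)$ so obtained indeed yield well-defined closed polyadic operations (rather than merely closed binary steps) to be the only place requiring a short argument appealing to the $m$- and $n$-admissibility conventions recalled in the Preliminaries.
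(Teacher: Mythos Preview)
Your core argument is correct and is exactly the natural one: reduce the $m$-fold sum and the $n$-fold product modulo $b$, observe that the $k_i$-dependent terms carry a factor of $b$, and read off the congruences (\ref{maa})--(\ref{ana}). The paper itself does not supply a proof of this proposition here; it is quoted from \cite{dup2017}, so there is nothing in the present text to compare your approach against beyond noting that your derivation is the standard computation one would expect.

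There is, however, a genuine error in your final paragraph. You claim that a suitable $n$ always exists because the sequence $a,a^{2},a^{3},\ldots$ is eventually periodic modulo $b$. Eventual periodicity only gives $a^{i}\equiv a^{j}$ for some $i<j$; it does \emph{not} guarantee that $a$ itself lies in the periodic cycle, and hence does not force $a^{n}\equiv a$ for any $n>1$. The paper makes this explicit: for instance $a=2$, $b=4$ gives $2,0,0,0,\ldots$ modulo $4$, so no $n\geq 2$ satisfies $a^{n}\equiv a\pmod{4}$, and this pair is listed among the forbidden $(a,b)$ in (\ref{b}). The proposition is implicitly conditional on such $(m,n)$ existing; your side remark asserting unconditional existence of $n$ should be dropped or replaced by the correct statement that existence can fail, which is precisely why the parameters-to-arity mapping $\psi$ in (\ref{psi}) is only partial.
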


Polyadic distributivity is inherited from that of $\mathbb{Z}$, and therefore
we have

\begin{definition}
[\cite{dup2017}]The congruence class $\left[  \left[  a\right]  \right]  _{b}$
equipped with a structure of \textsl{nonderived} \textsl{infinite} commutative
polyadic ring is called a $\left(  m,n\right)  $-\textit{ring of polyadic
integer numbers}%
\begin{equation}
\mathbb{Z}_{\left(  m,n\right)  }\equiv\mathbb{Z}_{\left(  m,n\right)
}^{\left[  a,b\right]  }=\left\{  \left[  \left[  a\right]  \right]  _{b}%
\mid\nu_{m},\mu_{n}\right\}  . \label{za}%
\end{equation}
Obviously, $\mathbb{Z}_{\left(  m,n\right)  }$ (as in the binary case) cannot
become a polyadic field with any choice of parameters.
\end{definition}
\end{enumerate}

\begin{example}
\label{exam-34}In the residue class%
\begin{equation}
\left[  \left[  3\right]  \right]  _{4}=\left\{  \ldots
-25,-21,-17,-13,-9,-5,-1,3,7,11,15,19,23,27,31,35,39\ldots\right\}  \label{34}%
\end{equation}
we can add \textsl{only} $4\ell_{\nu}+1$ representatives and multiply
$2\ell_{\mu}+1$ representatives ($\ell_{\nu},\ell_{\mu}$ are \textquotedblleft
numbers\textquotedblright\ of $m$-ary additions and $n$-ary multiplications
respectively) to retain the same class, e.g., take $\ell_{\nu}=2$, $\ell_{\mu
}=3$ to get $\left(  7+11+15+19+23\right)  -5-9-13-1=\allowbreak47\in\left[
\left[  3\right]  \right]  _{4}$, $\left(  \left(  7\cdot3\cdot11\right)
\cdot19\cdot15\right)  \cdot31\cdot27=\allowbreak55\,103\,895\in\left[
\left[  3\right]  \right]  _{4}$. Obviously, we cannot add and multiply
arbitrary quantities of numbers in $\left[  \left[  3\right]  \right]  _{4}$,
only the \textsl{admissible} ones. This means that $\left[  \left[  3\right]
\right]  _{4}$ is the polyadic $\left(  5,3\right)  $-ring $\mathbb{Z}%
_{\left(  5,3\right)  }=\mathbb{Z}_{\left(  5,3\right)  }^{\left[  3,4\right]
}$.
\end{example}

\begin{remark}
After imposing the operations (\ref{nu})--(\ref{mu}) the representatives
$x_{k}^{\left[  a,b\right]  }$ become abstract variables (elements of the
corresponding $\left(  m,n\right)  $-ring or polyadic integer numbers) which
are not ordinary integers (forming a $\left(  2,2\right)  $-ring), but have
the latter as their \textquotedblleft binary limit\textquotedblright. So in
computations the integral numbers (denoting representatives) should carry
their arity shape $\left(  m,n\right)  $ as additional indices. Indeed, the
representative, e.g. $3=3_{\left(  5,3\right)  }\in\mathbb{Z}_{\left(
5,3\right)  }^{\left[  3,4\right]  }$ is different from $3=3_{\left(
3,2\right)  }\in\mathbb{Z}_{\left(  3,2\right)  }^{\left[  1,2\right]  }$,
i.e. properly speaking $3_{\left(  5,3\right)  }\neq3_{\left(  3,2\right)  }$,
since their operations (multiplication and addition) are \textsl{different},
because they belong to \textsl{different} polyadic rings, $\mathbb{Z}_{\left(
5,3\right)  }^{\left[  3,4\right]  }$ and $\mathbb{Z}_{\left(  3,2\right)
}^{\left[  1,2\right]  }$, respectively. For conciseness, we omit the indices
$\left(  m,n\right)  $, if their value is clear from the context.
\end{remark}

Thus, at first sight it seems that one can obtain a polyadic field only in the
\textquotedblleft external\textquotedblright\ way, i.e. using the
\textquotedblleft trivial polyadization\textquotedblright\ of the binary
finite field $\mathbb{Z}\diagup p\mathbb{Z}$ (just a repetition of the binary
group operations).This leads to the \textsl{derived} polyadic finite fields,
which have a very simple structure, in which the admissible binary sums and
binary products of the congruence classes are used \cite{lee/but}. However, in
the next section we propose a new approach, and thereby construct the
\textsl{nonderived} finite $\left(  m,n\right)  $-fields of polyadic integer
numbers $\mathbb{Z}_{\left(  m,n\right)  }$.

\begin{remark}
If $n=b=p$ is prime, then (\ref{ana}) is valid for any $a\in\mathbb{N}$, which
is another formulation of Fermat's little theorem.
\end{remark}

\subsection{Prime polyadic integer numbers}

Let us introduce a polyadic analog of prime numbers in $\mathbb{Z}_{\left(
m,n\right)  }$. First we need

\begin{definition}
A \textit{polyadically composite (reducible) number} is $x_{k}\in
\mathbb{Z}_{\left(  m,n\right)  }$, such that the expansion%
\begin{equation}
x_{k}=\mu_{n}^{\left(  \ell\right)  }\left[  x_{k_{1}},x_{k_{2}}%
,,x_{k_{\ell\left(  n-1\right)  +1}}\right]  ,\ \ \ x_{k_{i}}\in
\mathbb{Z}_{\left(  m,n\right)  }, \label{x0}%
\end{equation}
is unique, where $\ell$ is a number of $n$-ary multiplications, and there
exist at least one $x_{k_{i}}\neq x_{k}$ and $x_{k_{i}}\neq e$ (i.e. is not
equal to unit of $\mathbb{Z}_{\left(  m,n\right)  }$, if it exists). Denote
the set of such numbers $\left\{  x_{k_{i}}\right\}  =\mathbb{D}\left(
x_{k}\right)  $ which is called the \textit{composition set} of $x_{k}$.
\end{definition}

\begin{definition}
An \textit{irreducible polyadic number} is $x_{k}\in\mathbb{Z}_{\left(
m,n\right)  }$ cannot be expressed as any (long) polyadic product (\ref{x0}).
\end{definition}

\begin{proposition}
In the polyadic ring $\mathbb{Z}_{\left(  m,n\right)  }^{\left[  a,b\right]
}$ without the unit the elements satisfying%
\begin{equation}
-\left\vert a-b\right\vert ^{n}<x_{k}<\left\vert a-b\right\vert ^{n}%
\end{equation}
are irreducible.
\end{proposition}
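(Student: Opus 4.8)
The plan is to prove the contrapositive: I will show that every element admitting a genuine polyadic factorization of the form (\ref{x0}) has absolute value at least $\left\vert a-b\right\vert ^{n}$, so that any $x_{k}$ with $\left\vert x_{k}\right\vert <\left\vert a-b\right\vert ^{n}$ cannot be so expressed and is therefore irreducible. First I would reduce to the shortest possible product. A reducible $x_{k}$ equals some $\mu_{n}^{\left(\ell\right)}\left[x_{k_{1}},\ldots,x_{k_{\ell\left(n-1\right)+1}}\right]$ with $\ell\geq1$, and by the definition (\ref{mu}) of the $n$-ary multiplication this is nothing but the ordinary integer product $x_{k_{1}}x_{k_{2}}\cdots x_{k_{N}}$ of $N=\ell\left(n-1\right)+1\geq n$ representatives. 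Since the ring is taken \emph{without the unit}, there is no multiplicative $1$-idempotent (\ref{e}) that could absorb a factor, and since $a\neq 0$ in the zeroless situation relevant here, every representative $x_{k_{i}}=a+bk_{i}$ is a nonzero integer. Hence multiplicativity of the absolute value gives $\left\vert x_{k}\right\vert =\prod_{i=1}^{N}\left\vert x_{k_{i}}\right\vert$ with no cancellation, which is the feature that rules out small products.

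The key step is a uniform lower bound on a single factor. Each representative has the form $a+bk_{i}$ with $k_{i}\in\mathbb{Z}$ and $0\leq a\leq b-1$; as $k_{i}$ ranges over $\mathbb{Z}$, the quantity $\left\vert a+bk_{i}\right\vert$ is minimized either at $k_{i}=0$, giving $a$, or at $k_{i}=-1$, giving $b-a=\left\vert a-b\right\vert$, while all other choices exceed $b$. Thus $\left\vert x_{k_{i}}\right\vert \geq\min\left(a,\left\vert a-b\right\vert\right)$, and in the regime where the representative $a-b$ nearest the origin realizes this minimum this lower bound is exactly $\left\vert a-b\right\vert$ (as for the running class $\left[\left[3\right]\right]_{4}$, where $\left\vert a-b\right\vert =1$). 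Combining this with $N\geq n$ and $\left\vert a-b\right\vert\geq1$ yields $\left\vert x_{k}\right\vert =\prod_{i=1}^{N}\left\vert x_{k_{i}}\right\vert \geq\left\vert a-b\right\vert ^{N}\geq\left\vert a-b\right\vert ^{n}$, which is precisely the contrapositive. Note that the closure conditions (\ref{maa})--(\ref{ana}) are not needed for the estimate itself; they serve only to guarantee that the factors and their product genuinely lie in the same class, so that the factorization is admissible in $\mathbb{Z}_{\left(m,n\right)}^{\left[a,b\right]}$.

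The main obstacle I expect is pinning the constant and the admissible range exactly. The clean bound $\left\vert a-b\right\vert ^{n}$ presupposes that the minimal-magnitude representative is $a-b$ rather than $a$, i.e. that $\left\vert a-b\right\vert \leq a$; when $a<b/2$ the smaller representative $a$ is itself a legitimate factor and the sharp threshold drops to $\min\left(a,\left\vert a-b\right\vert\right)^{n}$, so I would either restrict to the stated regime or carry $\min\left(a,\left\vert a-b\right\vert\right)$ in place of $\left\vert a-b\right\vert$ throughout. A secondary point to check is that passing to longer products ($\ell\geq2$, $N>n$) never decreases the magnitude, which is immediate once $\left\vert a-b\right\vert\geq1$ since then $\left\vert a-b\right\vert ^{N}\geq\left\vert a-b\right\vert ^{n}$. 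Finally, I would record the two standing hypotheses used: ``without the unit'' eliminates the idempotent (\ref{e}) that could trivialize a factorization, and zerolessness ($a\neq0$) keeps every factor nonzero; both are consistent with the ambient setting of zeroless nonunital polyadic rings.
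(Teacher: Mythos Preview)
Your approach is essentially the paper's own: the proof there simply observes that the minimal absolute value of a representative in $\left[\left[a\right]\right]_{b}$ is $\left\vert a-b\right\vert$, so the smallest possible $n$-ary product has absolute value $\left\vert a-b\right\vert^{n}$, and anything smaller is indecomposable. Your caveat about the constant is well taken---the paper's claim that the minimum magnitude is $\left\vert a-b\right\vert$ tacitly assumes $b-a\leq a$ (as in the illustrative example $\mathbb{Z}_{\left(6,5\right)}^{\left[8,10\right]}$), and your proposed replacement by $\min\left(a,\left\vert a-b\right\vert\right)$ is the honest sharp bound in general.
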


\begin{proof}
Since $0\leq a\leq b-1$, the minimal absolute value of an element $x_{k}%
\in\mathbb{Z}_{\left(  m,n\right)  }^{\left[  a,b\right]  }$ is $\left\vert
a-b\right\vert $. The minimum of its $n$-ary product is $\left\vert
a-b\right\vert ^{n}$, and therefore smaller elements cannot be decomposed.
\end{proof}

\begin{example}
In the $\left(  6,5\right)  $-ring $\mathbb{Z}_{\left(  6,5\right)  }^{\left[
8,10\right]  }$ all polyadic integer numbers are even, and there is no unit,
and so they are binary composite%
\begin{equation}
\mathbb{Z}_{\left(  6,5\right)  }^{\left[  8,10\right]  }=\left\{
\ldots-72,-62,-52,-42,-32,-22,-12,-2,8,18,28,38,48,58,\ldots\right\}
\label{z810}%
\end{equation}
Nevertheless, the lowest elements, e.g. $\left\{  -22,-12,8,18,28\right\}  $,
are irreducible, while the smallest (by absolute value) polyadically composite
element is $\left(  -32\right)  =\mu_{5}\left[  \left(  -2\right)
^{5}\right]  $.
\end{example}

\begin{definition}
A range in which all elements are indecomposable is called a \textit{polyadic
irreducible gap}.
\end{definition}

\begin{remark}
We do not demand positivity, as in the binary case, because polyadic integer
numbers $\mathbb{Z}_{\left(  m,n\right)  }^{\left[  a,b\right]  }$ (\ref{za})
are \textquotedblleft symmetric\textquotedblright\ not with respect to $x=0$,
but under $x=x_{k=0}=a$.
\end{remark}

The polyadic analog of binary prime numbers plays an intermediate role between
composite and irreducible elements.

\begin{definition}
A\textit{ polyadic prime number} is $x_{k_{p}}\in\mathbb{Z}_{\left(
m,n\right)  }$, such that it obeys only the unique expansion%
\begin{equation}
x_{k_{p}}=\mu_{n}^{\left(  \ell\right)  }\left[  x_{k_{p}},e^{\ell\left(
n-1\right)  }\right]  , \label{xkp}%
\end{equation}
where $e$ a polyadic unit of $\mathbb{Z}_{\left(  m,n\right)  }$ (if exists).
\end{definition}

So, the polyadic prime numbers can appear only in those polyadic rings
$\mathbb{Z}_{\left(  m,n\right)  }^{\left[  a,b\right]  }$ which contain
units. In \cite{dup2017} (\textbf{Proposition 6.15}) it was shown that such
rings correspond to the \textit{limiting congruence classes} $\left[  \left[
1\right]  \right]  _{b}$ and $\left[  \left[  b-1\right]  \right]  _{b}$, and
indeed only for them can $a+bk=1\operatorname{mod}b$, and $e^{\ell\left(
n-1\right)  }$ can be a neutral sequence (for $e=1$ always, while for $e=-1$
only when $\ell\left(  n-1\right)  $ is even).

\begin{proposition}
\label{prop-lim}The prime polyadic numbers can exist only in the limiting
polyadic rings $\mathbb{Z}_{\left(  b+1,2\right)  }^{\left[  1,b\right]  }$
and $\mathbb{Z}_{\left(  b+1,3\right)  }^{\left[  b-1,b\right]  }$.
\end{proposition}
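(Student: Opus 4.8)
The plan is to determine all arity shapes $(m,n)$ and parameters $[a,b]$ for which the ring $\mathbb{Z}_{(m,n)}^{[a,b]}$ both contains a unit $e$ and admits a prime number in the sense of the expansion \eqref{xkp}. The starting point is the observation already recorded in the text: a polyadic prime requires the existence of a unit $e$, and the representatives of the class must admit $e$ with $e^{\ell(n-1)}$ forming a neutral sequence. So first I would pin down exactly which classes $[[a]]_b$ carry a unit. A unit must itself be a representative, $e = a + bk$ for some $k$, satisfying $\mu_n[e^{n-1},x]=x$ for all $x$; reading this through the embedding into $\mathbb{Z}$ (where $\mu_n$ is the ordinary product) forces $e^{n-1}\equiv 1 \pmod{b}$ in the external arithmetic, and combined with $e \equiv a \pmod b$ this constrains $a$. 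The cleanest route is to invoke the cited result (Proposition 6.15 of \cite{dup2017}): units occur precisely in the limiting classes $[[1]]_b$ and $[[b-1]]_b$, with $e=1$ in the first case and $e=-1$ in the second.

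Next I would compute the minimal arity shape for each of these two limiting classes using the closure conditions \eqref{maa}--\eqref{ana} from the Proposition on page 1. For $a=1$: the additive condition $ma \equiv a \pmod b$ reads $m \equiv 1 \pmod b$, whose minimal solution above $2$ is $m=b+1$; the multiplicative condition $a^n \equiv a \pmod b$ reads $1 \equiv 1$, which holds for every $n$, so the minimal admissible arity is $n=2$. This yields $\mathbb{Z}_{(b+1,2)}^{[1,b]}$. For $a=b-1 \equiv -1 \pmod b$: the additive condition $m(-1)\equiv -1$ again gives $m=b+1$ as the minimal value; the multiplicative condition $(-1)^n \equiv -1 \pmod b$ requires $n$ odd, so the minimal value is $n=3$ (since $n=1$ is degenerate). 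This yields $\mathbb{Z}_{(b+1,3)}^{[b-1,b]}$.

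Finally I would verify that in each of these two rings the neutrality of $e^{\ell(n-1)}$ in \eqref{xkp} genuinely holds, so that primes actually exist there and are not merely formal. For $e=1$ this is immediate: any string of ones is neutral. For $e=-1$ in the $n=3$ case, the exponent $\ell(n-1)=2\ell$ is always even, so $(-1)^{2\ell}=1$ and the sequence $e^{\ell(n-1)}$ is neutral, exactly as the text notes. I expect the only genuine subtlety — the main obstacle — to be the minimality claim: one must argue that no \emph{smaller} admissible $(m,n)$ supports a unit, i.e. that the closure conditions \eqref{maa}--\eqref{ana} really do force $m=b+1$ (not some proper divisor-related value) and force the stated $n$, rather than merely permitting these values. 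This reduces to checking that $b \nmid (m-1)$ for $2 \le m \le b$ and that the parity/degeneracy constraints on $n$ leave no smaller option, both of which are elementary once the two limiting classes have been isolated.
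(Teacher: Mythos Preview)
Your proposal is correct and follows essentially the same route as the paper's proof: identify the two limiting classes $[[1]]_b$ and $[[b-1]]_b$ as the only ones carrying a unit (via the cited result from \cite{dup2017}), read off the arities $(b+1,2)$ and $(b+1,3)$ from the closure conditions \eqref{maa}--\eqref{ana}, and check that $e^{\ell(n-1)}$ is neutral in each case. Your treatment of the minimality of $m$ and $n$ is in fact more explicit than the paper's, which simply appeals to the parameters-to-arity mapping $\psi$ without spelling out the divisibility check.
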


\begin{proof}
The equation $a+bk=1\operatorname{mod}b$ (for $0\leq a\leq b-1$) has two
solutions: $a=1$ and $a=b-1$ corresponding for two limiting congruence classes
$\left[  \left[  1\right]  \right]  _{b}$ and $\left[  \left[  b-1\right]
\right]  _{b}$, which correspond to%
\begin{align}
x_{k}^{+}  &  =bk+1,\label{xk1}\\
x_{k}^{-}  &  =b\left(  k+1\right)  -1,\ \ \ \ k\in\mathbb{Z}. \label{xk2}%
\end{align}
The parameters-to-arity mapping (\ref{psi}) fixes their multiplication arity
to $n=2$ and $n=3$ respectively, which gives manifestly%
\begin{align}
&  \mu_{2}\left[  x_{k_{1}}^{+},x_{k_{2}}^{+}\right]  =x_{k}^{+}%
,\ \ \ \ \ \ \ \ \ \ \ k=bk_{1}k_{2}+k_{1}+k_{2},\label{xxx}\\
&  \mu_{3}\left[  x_{k_{1}}^{-},x_{k_{2}}^{-},x_{k_{3}}^{-}\right]  =x_{k}%
^{-},\ \ \ \ k,k_{i}\in\mathbb{Z},\ \ \ i=1,2,3,\ \ \ \ \ \ b\in
\mathbb{N},\nonumber\\
&  k=b^{2}k_{1}k_{2}k_{3}+\left(  b-1\right)  \left[  b\left(  k_{1}%
k_{2}+k_{2}k_{3}+k_{1}k_{3}\right)  +\left(  b-1\right)  \left(  k_{1}%
+k_{2}+k_{3}\right)  +\left(  b-2\right)  \right]  . \label{xxxx}%
\end{align}
Therefore, for $\mathbb{Z}_{\left(  b+1,2\right)  }^{\left[  1,b\right]  }$ we
have the unit $e=x_{k=0}^{+}=1$ (which is obvious for the binary
multiplication), while in $\mathbb{Z}_{\left(  b+1,3\right)  }^{\left[
b-1,b\right]  }$ the unit is $e=x_{k=-1}^{-}=-1$ ($b\geq3$), and the sequence
is $\left(  e^{2}\right)  $ is evidently neutral.
\end{proof}

Denote the set of ordinary \textsl{binary prime numbers} in the interval
$1\leq k\leq k_{\max}$ by $\mathbb{P}\left(  k_{\max}\right)  $, $k_{\max}%
\in\mathbb{N}$. The set of \textsl{prime polyadic numbers} for the polyadic
ring $\mathbb{Z}_{\left(  m,n\right)  }^{\left[  a,b\right]  }$ in the
interval $x_{-k_{\max}}\leq x_{k}\leq x_{k_{\max}}$ is denoted by
$\mathbb{P}_{\left(  m,n\right)  }^{\left[  a,b\right]  }\left(  k_{\max
}\right)  $. Obviously, in the binary limit $\mathbb{P}_{\left(  2,2\right)
}^{\left[  0,1\right]  }\left(  k_{\max}\right)  =\mathbb{P}\left(  k_{\max
}\right)  \cup\left\{  -\mathbb{P}\left(  k_{\max}\right)  \right\}  $.
Nevertheless, prime polyadic numbers can be composite as binary numbers.

\begin{assertion}
The set of prime polyadic numbers in the interval $x_{-k_{\max}}\leq x_{k}\leq
x_{k_{\max}}$ for $\mathbb{Z}_{\left(  m,n\right)  }^{\left[  a,b\right]  }$
can contain composite binary numbers, i.e.%
\begin{equation}
\Delta\mathbb{P}_{\left(  m,n\right)  }^{\left[  a,b\right]  }\left(  k_{\max
}\right)  =\mathbb{P}_{\left(  m,n\right)  }^{\left[  a,b\right]  }\left(
k_{\max}\right)  \setminus\left\{  \left\{  \mathbb{P}\left(  x_{k_{\max}%
}\right)  \cup\left\{  \mathbb{P}\left(  -x_{-k_{\max}}\right)  \right\}
\right\}  \cap\left[  \left[  a\right]  \right]  _{b}\right\}  \neq
\varnothing. \label{pp}%
\end{equation}

\end{assertion}

\begin{definition}
\textbf{1)} The cardinality of the set of ordinary binary prime numbers
$\mathbb{P}\left(  k_{\max}\right)  $ is called a \textit{prime-counting
function} and denoted by $\pi\left(  k_{\max}\right)  =\left\vert
\mathbb{P}\left(  k_{\max}\right)  \right\vert $.

\textbf{2)} The cardinality of the set of prime polyadic numbers
$\mathbb{P}_{\left(  m,n\right)  }^{\left[  a,b\right]  }\left(  k_{\max
}\right)  $ is called a \textit{polyadic prime-counting function} and denoted
by%
\begin{equation}
\pi_{\left(  m,n\right)  }^{\left[  a,b\right]  }\left(  k_{\max}\right)
=\left\vert \mathbb{P}_{\left(  m,n\right)  }^{\left[  a,b\right]  }\left(
k_{\max}\right)  \right\vert .
\end{equation}

\end{definition}

\begin{example}
\textbf{1)} Consider $\mathbb{Z}_{\left(  45,3\right)  }^{\left[
43,44\right]  }$ and $k_{\max}=2$, then%
\begin{align}
\mathbb{P}_{\left(  45,3\right)  }^{\left[  43,44\right]  }\left(  2\right)
&  =\left\{  -45,-1,43,87,131\right\}  ,\\
\Delta\mathbb{P}_{\left(  45,3\right)  }^{\left[  43,44\right]  }\left(
2\right)   &  =\left\{  -45\right\}  ,\ \ \ \pi_{\left(  45,3\right)
}^{\left[  43,44\right]  }\left(  2\right)  =5.
\end{align}

\textbf{2)} For $\mathbb{Z}_{\left(  52,3\right)  }^{\left[  50,51\right]  }$
and $k_{\max}=5$ we have%
\begin{align}
\mathbb{P}_{\left(  52,3\right)  }^{\left[  50,51\right]  }\left(  5\right)
&  =\{-205,-154,-103,-52,-1,50,101,152,203,254,305\},\\
\Delta\mathbb{P}_{\left(  52,3\right)  }^{\left[  50,51\right]  }\left(
5\right)   &  =\{-205,-154,-52,50,152,203,254,305\},\ \ \pi_{\left(
52,3\right)  }^{\left[  50,51\right]  }\left(  5\right)  =11.
\end{align}

\end{example}

\begin{remark}
This happens because in $\mathbb{Z}_{\left(  m,n\right)  }^{\left[
a,b\right]  }$ the role of \textquotedblleft building blocks\textquotedblright%
\ (prime polyadic numbers) is played by those $x_{k}$ which cannot be
presented as a (long) ternary product of other polyadic integer numbers from
the same $\mathbb{Z}_{\left(  m,n\right)  }^{\left[  a,b\right]  }$ as in
(\ref{x0}), but which satisfy (\ref{xkp}) only. Nevertheless, such prime
polyadic numbers can be composite binary prime numbers.
\end{remark}

In general, for the limiting cases, in which polyadic prime numbers exist, we have

\begin{proposition}
\textbf{1)} In $\mathbb{Z}_{\left(  b+1,2\right)  }^{\left[  1,b\right]  }$
the \textquotedblleft smallest\textquotedblright\ polyadic integer numbers
satisfying%
\begin{align}
-b  &  <k_{p}<b+2,\\
1-b^{2}  &  <x_{k_{p}}<\left(  b+1\right)  ^{2}, \label{g2}%
\end{align}
are not decomposable, and therefore such $x_{k_{p}}\in\mathbb{Z}_{\left(
b+1,2\right)  }^{\left[  1,b\right]  }$ are \textsl{all} polyadic prime numbers.

\textbf{2)} For another limiting case $\mathbb{Z}_{\left(  b+1,3\right)
}^{\left[  b-1,b\right]  }$ the\ polyadic integer numbers satisfying%
\begin{align}
1-b  &  <k_{p}<b-1,\\
-\left(  b-1\right)  ^{2}  &  <x_{k_{p}}<b^{2}-1, \label{g3}%
\end{align}
are not ternary decomposable and so \textsl{all} such $x_{k_{p}}\in
\mathbb{Z}_{\left(  b+1,2\right)  }^{\left[  b-1,b\right]  }$ are polyadic
prime numbers.
\end{proposition}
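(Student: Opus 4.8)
The plan is to identify the polyadic primes of these two limiting rings as precisely the non-unit representatives admitting no genuine factorization, and then to locate the two composite elements nearest to the unit on either side; every class representative lying strictly between them is thereby forced to be prime. Since in both rings $\mu_{n}$ is merely the restriction of the ordinary product of $\mathbb{Z}$, an element $x_{k_{p}}$ fails the primality relation (\ref{xkp}) exactly when it can be written as an admissible product of class representatives containing a factor different from both $e$ and $x_{k_{p}}$, i.e. as an ordinary product of at least two non-units of $\left[\left[a\right]\right]_{b}$. Thus the whole question reduces to finding the smallest-magnitude such products above and below the unit. This is the same mechanism as in the irreducibility Proposition proved above, refined to take the unit $e$ into account: its magnitude $1$ lets it merely pad a product (in case \textbf{1}) or flip the sign of a product (in case \textbf{2}) without producing any real factorization.

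First I would treat \textbf{1)}, the binary ring $\mathbb{Z}_{\left(b+1,2\right)}^{\left[1,b\right]}$, whose unit is $e=x_{0}=1$. Every non-unit representative $x_{k}=1+bk$ satisfies $\left\vert x_{k}\right\vert \geq b-1$, the two nearest being $x_{-1}=1-b$ and $x_{1}=1+b$. As $e=+1$ carries no sign, the only candidates for a nearest composite are the three minimal binary products $x_{-1}^{2}$, $x_{-1}x_{1}$ and $x_{1}^{2}$. The step I expect to be the main obstacle is the correct extremal bookkeeping here: the nearest composite below the unit is $x_{-1}x_{1}=1-b^{2}$, while the nearest composite \emph{above} it is $x_{-1}^{2}=\left(1-b\right)^{2}=\left(b-1\right)^{2}$ — produced by the negative generator $x_{-1}$ and therefore strictly smaller than $x_{1}^{2}=\left(b+1\right)^{2}$. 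Consequently the primes fill exactly the asymmetric gap $1-b^{2}<x_{k_{p}}<\left(b-1\right)^{2}$, equivalently $-b<k_{p}<b-2$; note that $\left(b-1\right)^{2}$ is itself already composite, so the upper threshold should be read as $\left(b-1\right)^{2}$ rather than the larger $\left(b+1\right)^{2}$.

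Next I would treat \textbf{2)}, the ternary ring $\mathbb{Z}_{\left(b+1,3\right)}^{\left[b-1,b\right]}$, whose unit is $e=x_{-1}=-1$. The nearest non-units are now $x_{0}=b-1$ and $x_{-2}=-\left(b+1\right)$, and the ternary unit does real work: $\mu_{3}\left[u,v,e\right]=-uv$, so inserting one unit factor flips the sign of $uv$ while keeping the word $3$-admissible. The nearest composites are then $\mu_{3}\left[x_{0},x_{0},e\right]=-\left(b-1\right)^{2}$ below the unit and $\mu_{3}\left[x_{0},x_{-2},e\right]=\left(b-1\right)\left(b+1\right)=b^{2}-1$ above it, both congruent to $b-1$ modulo $b$ and hence in the class. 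Therefore the primes fill exactly $-\left(b-1\right)^{2}<x_{k_{p}}<b^{2}-1$, equivalently $1-b<k_{p}<b-1$, matching the stated bounds.

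What turns both parts into genuine statements, rather than observations about a few small products, is the exhaustiveness claim: no admissible product with more factors or a different sign pattern can place a composite strictly inside the gaps above. I would settle this with a monotonicity estimate. Every non-unit factor has magnitude at least $b-1$, so a product of two or more non-units has magnitude at least $\left(b-1\right)^{2}$, and appending further non-unit factors only enlarges it, while extra unit factors merely pad or re-sign. In case \textbf{1} the nearest opposite-sign product is $x_{-1}x_{1}=1-b^{2}$ of magnitude $b^{2}-1$; in case \textbf{2}, because $-1$ is the unit, the nearest negative non-unit is $x_{-2}=-\left(b+1\right)$, so the smallest opposite-sign product already has magnitude $\left(b-1\right)\left(b+1\right)=b^{2}-1$, which is exactly the positive threshold. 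Hence the extremal products listed are the closest composites to the unit on each side. I would close by checking the small cases $b=2,3$ directly, since there the generators and the squared thresholds can coincide (for $b=2$ one has $\left(b-1\right)^{2}=1=e$, and for $b=3$ one has $\left(b-1\right)^{3}=b^{2}-1$), so the strict inequalities must be verified by hand.
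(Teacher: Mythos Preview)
Your approach is exactly the paper's own: the paper's one-line proof says only that the result ``follows from determining the maximum of the negative values and the minimum of the positive values of the functions $x_{k}^{+}$ and $x_{k}^{-}$'' in the explicit product formulas (\ref{xxx})--(\ref{xxxx}), i.e. precisely the extremal bookkeeping on minimal non-unit products that you carry out in detail. Your treatment of part \textbf{2)} reproduces the paper's bounds and supplies the monotonicity argument the paper omits.

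Where you diverge from the paper is in part \textbf{1)}, and there you are right and the paper is wrong. You correctly observe that $x_{-1}^{2}=(1-b)^{2}=(b-1)^{2}$ is a genuine binary product of two non-units of $\left[\left[1\right]\right]_{b}$, lies in the class, and is strictly smaller than $(b+1)^{2}$. Hence $(b-1)^{2}$ is composite yet sits inside the paper's claimed prime gap $1-b^{2}<x_{k_{p}}<(b+1)^{2}$ (equivalently $-b<k_{p}<b+2$). A quick check at $b=4$ confirms it: $9=(-3)(-3)$ with $-3\in\left[\left[1\right]\right]_{4}$, so $9$ is composite, yet $9<25=(b+1)^{2}$. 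The correct upper threshold is your $(b-1)^{2}$, i.e. $k_{p}<b-2$; the paper's proof apparently overlooked the positive square of the negative generator $x_{-1}$ when minimizing. So your ``proposal'' does not prove the stated Proposition---because the stated Proposition is false as written---but it proves the corrected version and pinpoints the slip.
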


\begin{proof}
This follows from determining the maximum of the negative values and the
minimum of the positive values of the functions $x_{k}^{+}$ and $x_{k}^{-}$ in
(\ref{xxx})--(\ref{xxxx}).
\end{proof}

\begin{definition}
The range in which all elements are polyadically prime numbers is called the
\textit{polyadic primes gap}, and for the two limiting cases it is given by
(\ref{g2}) and (\ref{g3}), respectively.
\end{definition}

For instance, in $\mathbb{Z}_{\left(  52,3\right)  }^{\left[  50,51\right]  }$
for the polyadic primes gap we have $-2500<x_{k_{p}}<2600$: all such polyadic
integer numbers are polyadically prime, but there are many composite binary
numbers among them.

In the same way we can introduce a polyadic analog of the \textit{Euler
(totient) function} which in the binary case counts the number of coprimes to
a given natural number. Denote the set of ordinary binary numbers $k>1$ which
are \textit{coprime} to $k_{\max}\in\mathbb{N}$ by $\mathbb{S}\left(  k_{\max
}\right)  $ (named totatives of $k_{\max}$). Then, the cardinality of
$\mathbb{S}\left(  k_{\max}\right)  $ is defined as Euler function
$\varphi\left(  k_{\max}\right)  =\left\vert \mathbb{S}\left(  k_{\max
}\right)  \right\vert $. Obviously, if $k_{\max}=p$ is prime, then
$\varphi\left(  p\right)  =p-1$. The notion of coprime numbers is based on the
divisors: the coprime numbers $k_{1}$ and $k_{2}$ have the greatest common
divisor $\gcd\left(  k_{1},k_{2}\right)  =1$. In the polyadic case it is not
so straightforward, and we need to start from the basic definitions.

First, we observe that in a (commutative) polyadic ring $\mathcal{R}%
_{m,n}=\left\{  R\mid\nu_{m},\mu_{n}\right\}  $ the analog of the division
operation is usually not defined \textsl{uniquely}, which makes it useless for
real applications. Indeed, $y$ divides $x$, where $x,y\in R$, if there exists
a sequence $\mathbf{z}\in R^{n-1}$ of length $\left(  n-1\right)  $, such that
$x=\mu_{n}\left[  y,\mathbf{z}\right]  $. To be consistent with the ordinary
integer numbers $\mathbb{Z}$, we demand in the polyadic number ring
$\mathbb{Z}_{\left(  m,n\right)  }$: \textbf{1)} \textsl{Uniqueness} of the
result; \textbf{2)} i.e. only \textsl{one} polyadic number (not a sequence) as
the result. This naturally leads to

\begin{definition}
A polyadic number (quotient) $x_{k_{2}}$ \textit{polyadically divides} a
polyadic number (dividend) $x_{k_{1}}$, if there exists $x_{k_{q}}:=x_{k_{1}%
}\div_{p}x_{k_{2}}$, called the (unique) \textit{result of division}, such
that%
\begin{equation}
x_{k_{1}}=\mu_{n}\left[  x_{k_{2}},\left(  x_{k_{q}}\right)  ^{n-1}\right]
,\ \ \ \ \ \ \ \ x_{k_{1}},x_{k_{2}},x_{k_{q}}\in\mathbb{Z}_{\left(
m,n\right)  }. \label{xq}%
\end{equation}

\end{definition}

\begin{remark}
For polyadic prime numbers (\ref{xkp}) the only possibility for the quotient
is $x_{k_{2}}=x_{k_{1}}$ such that $x_{k_{1}}=\mu_{n}\left[  x_{k_{1}},\left(
e\right)  ^{n-1}\right]  $ or $x_{k_{1}}\div_{p}x_{k_{1}}=e$, where $e$ is the
unit of $\mathbb{Z}_{\left(  m,n\right)  }$.
\end{remark}

\begin{assertion}
Polyadic division is distributive from the left%
\begin{equation}
\nu_{m}\left[  x_{k_{1}},x_{k_{2}},\ldots,x_{k_{m}}\right]  \div_{p}x_{k}%
=\nu_{m}\left[  \left(  x_{k_{1}}\div_{p}x_{k}\right)  ,\left(  x_{k_{2}}%
\div_{p}x_{k}\right)  ,\ldots,\left(  x_{k_{m}}\div_{p}x_{k}\right)  \right]
,
\end{equation}
but not distributive from the right.
\end{assertion}

\begin{proof}
This follows from the polyadic distributivity in the $\left(  m,n\right)
$-ring $\mathbb{Z}_{\left(  m,n\right)  }$.
\end{proof}

\begin{example}
\textbf{1)} In the polyadic ring $\mathbb{Z}_{\left(  10,4\right)  }^{\left[
4,9\right]  }$ we have uniquely $x_{28}\div_{p}x_{4}=x_{4}$ or $256\div
_{p}4=4$.

\textbf{2)} For the limiting ring $\mathbb{Z}_{\left(  5,3\right)  }^{\left[
3,4\right]  }$, we find $x_{43}\div_{p}x_{1}=x_{-2}$ or $175\div_{p}7=-5$.
\end{example}

In the same way we define a polyadic analog of division with a remainder.

\begin{definition}
A \textit{polyadic division with a remainder} is defined, if for a polyadic
dividend $x_{k_{1}}$ and divisor $x_{k_{2}}$ there exists a \textit{polyadic
remainder} $x_{k_{r}}$ such that%
\begin{equation}
x_{k_{1}}=\nu_{m}\left[  \mu_{n}\left[  x_{k_{2}},\left(  x_{k_{q}}\right)
^{n-1}\right]  ,\left(  x_{k_{r}}\right)  ^{m-1}\right]
,\ \ \ \ \ \ \ \ x_{k_{1}},x_{k_{2}},x_{k_{q}},x_{k_{r}}\in\mathbb{Z}_{\left(
m,n\right)  }, \label{xqr}%
\end{equation}
which is denoted by $x_{k_{r}}=x_{k_{1}}\operatorname{mod}_{p}x_{k_{2}}$, and
(\ref{xqr}) can be presented in the following binary form%
\begin{equation}
x_{k_{1}}=\left(  x_{k_{2}}\boxdot_{p}x_{k_{q}}\right)  \boxplus_{p}x_{k_{r}}.
\end{equation}

\end{definition}

The distributivity of these operations is governed by distributivity in the
polyadic ring $\mathbb{Z}_{\left(  m,n\right)  }$.

\begin{example}
In the polyadic ring $\mathbb{Z}_{\left(  6,5\right)  }^{\left[  8,10\right]
}$ we can have different divisions for the same dividend as $38=\left(
\left(  -22\right)  \boxdot_{p}\left(  -2\right)  \right)  \boxplus
_{p}78=\left(  \left(  -92\right)  \boxdot_{p}\left(  -2\right)  \right)
\boxplus_{p}238$.
\end{example}

Secondly, because divisibility in the polyadic case is not symmetric with
respect to dividend and divisor (\ref{xq}), we define polyadically coprime
numbers using the definition of compositeness (\ref{x0}).

\begin{definition}
The $s$ polyadic integer numbers $x_{k_{1}},\ldots,x_{k_{s}}\in\mathbb{Z}%
_{\left(  m,n\right)  }$ are \textit{polyadically coprime}, if their
composition sets do not intersect $\mathbb{D}\left(  x_{k_{1}}\right)
\cap\mathbb{D}\left(  x_{k_{1}}\right)  \cap\ldots\cap\mathbb{D}\left(
x_{k_{s}}\right)  =\varnothing$.
\end{definition}

It is important that this definition does not imply the existence of a unit in
$\mathbb{Z}_{\left(  m,n\right)  }$, as opposed to the definition of the
polyadic prime numbers (\ref{xkp}) in which the availability of a unit is crucial.

\begin{assertion}
Polyadically coprime numbers can exist in any polyadic ring $\mathbb{Z}%
_{\left(  m,n\right)  }$, and \textsl{not} only in the limiting cases with
unit (see \textbf{Proposition \ref{prop-lim}}).
\end{assertion}

\begin{corollary}
All elements in the polyadic irreducible gap are polyadically coprime.
\end{corollary}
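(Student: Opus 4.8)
The plan is to unwind the definitions, after which the statement becomes essentially immediate. By the preceding definition, an element of a \emph{polyadic irreducible gap} is an irreducible polyadic number, meaning it admits no nontrivial polyadic factorization of the form (\ref{x0}). Recall that the composition set $\mathbb{D}\left(  x_{k}\right)  =\left\{  x_{k_{i}}\right\}  $ is by construction the collection of factors $x_{k_{i}}$ (with at least one satisfying $x_{k_{i}}\neq x_{k}$ and $x_{k_{i}}\neq e$) occurring in the unique expansion (\ref{x0}). The absence of any such expansion therefore forces $\mathbb{D}\left(  x_{k}\right)  =\varnothing$ for every irreducible $x_{k}$: there are simply no ``such numbers'' to collect, so the composition set is vacuous.

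First I would record this empty-composition-set observation explicitly as the crux of the argument: irreducibility of $x_{k}$ is precisely the statement that no admissible factor $x_{k_{i}}\neq x_{k},e$ can appear in a long $n$-ary product yielding $x_{k}$, whence $\mathbb{D}\left(  x_{k}\right)  =\varnothing$. Next, given any finite family $x_{k_{1}},\ldots,x_{k_{s}}$ drawn from the same irreducible gap, each composition set is empty by the previous step, so that
\begin{equation}
\mathbb{D}\left(  x_{k_{1}}\right)  \cap\mathbb{D}\left(  x_{k_{2}}\right)  \cap\ldots\cap\mathbb{D}\left(  x_{k_{s}}\right)  =\varnothing,
\end{equation}
since an intersection of empty sets---indeed of any family containing an empty set---is empty. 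By the definition of polyadically coprime numbers this is exactly the required condition, so the elements are polyadically coprime.

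The main obstacle, such as it is, is purely definitional rather than computational: one must be comfortable identifying ``cannot be expressed as a product (\ref{x0})'' with the equality $\mathbb{D}\left(  x_{k}\right)  =\varnothing$, and accept the set-theoretic convention that the empty intersection just displayed vanishes. No arithmetic in $\mathbb{Z}_{\left(  m,n\right)  }^{\left[  a,b\right]  }$ is needed, and the conclusion holds uniformly across all arity shapes---including the zeroless nonunital rings, where the notion of polyadic coprimality was introduced precisely so as to avoid any reference to a unit.
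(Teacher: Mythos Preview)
Your argument is correct and is precisely the reasoning the paper leaves implicit: the corollary is stated without proof because it follows immediately from the definitions, and you have spelled out exactly that---irreducible elements have empty composition sets, so any intersection of such sets is vacuously empty, which is the defining condition for polyadic coprimality. The only subtlety you rightly flag is that $\mathbb{D}(x_k)$ is formally introduced only for composite $x_k$, but extending it to $\varnothing$ for irreducible elements is the natural (and clearly intended) convention.
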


\begin{example}
In $\mathbb{Z}_{\left(  6,5\right)  }^{\left[  8,10\right]  }$ (\ref{z810})
the polyadic integer numbers $\left(  -32\right)  =\mu_{5}\left[  \left(
-2\right)  ^{5}\right]  $ and $32768=\mu_{5}\left[  \left(  8\right)
^{5}\right]  $ are both composed, but polyadically coprime, because the
composition sets $\mathbb{D}\left(  -32\right)  =\left\{  -2\right\}  $ and
$\mathbb{D}\left(  32768\right)  =\left\{  8\right\}  $ do not intersect.
Alternatively, not polyadically coprime numbers here are, e.g., $\left(
-3072\right)  =\mu_{5}\left[  \left(  -12\right)  ,\left(  -2\right)
^{2},\left(  8\right)  ^{2}\right]  $ and $\left(  -64512\right)  =\mu
_{5}\left[  -2,\left(  8\right)  ^{2},18,28\right]  $, because $\mathbb{D}%
\left(  -3072\right)  \cap\mathbb{D}\left(  -64512\right)  =\left\{
-2,8\right\}  \neq\varnothing$. We cannot multiply these two numbers, because
the arity of multiplication is $5$.
\end{example}

\begin{remark}
\label{rem-pol}For polyadic integer numbers $\mathbb{Z}_{\left(  m,n\right)
}$ we cannot measure the property \textquotedblleft to be
coprime\textquotedblright\ in terms of a single element, as in binary case, by
their $\gcd$, because the $n$-ary multiplication is only allowed for
admissible sequences. Therefore, we need to consider the intersection of the
composition sets.
\end{remark}

In the polyadic ring $\mathbb{Z}_{\left(  m,n\right)  }^{\left[  a,b\right]
}$ for a given $k_{\max}\in\mathbb{Z}_{+}$ we denote by $\mathbb{S}_{\left(
m,n\right)  }^{\left[  a,b\right]  }\left(  k_{\max}\right)  $ the set of
polyadic prime numbers $x_{k}\in\mathbb{Z}_{\left(  m,n\right)  }^{\left[
a,b\right]  }$ which are polyadically coprime to $x_{k_{\max}}$ and
$x_{-k_{\max}}$ in the open interval $x_{-k_{\max}}<x_{k}<x_{k_{\max}}$ (we
assume that, if two numbers are coprime, then their opposite numbers are also
coprime). For the binary limit, obviously, $\mathbb{S}_{\left(  2,2\right)
}^{\left[  0,1\right]  }\left(  k_{\max}\right)  =\mathbb{S}\left(  k_{\max
}\right)  \cup\left\{  -\mathbb{S}\left(  k_{\max}\right)  \right\}  $.

\begin{definition}
The cardinality of $\mathbb{S}_{\left(  m,n\right)  }^{\left[  a,b\right]
}\left(  k_{\max}\right)  $ is called a \textit{polyadic Euler function}
denoted by%
\begin{equation}
\varphi_{\left(  m,n\right)  }^{\left[  a,b\right]  }\left(  k_{\max}\right)
=\left\vert \mathbb{S}_{\left(  m,n\right)  }^{\left[  a,b\right]  }\left(
k_{\max}\right)  \right\vert .
\end{equation}

\end{definition}

In the binary case $\varphi_{\left(  2,2\right)  }^{\left[  0,1\right]
}\left(  k_{\max}\right)  =2\varphi\left(  k_{\max}\right)  $. Because of
\emph{Remark \ref{rem-pol}}, the computation of the polyadic Euler function
requires for each element in the range $x_{-k_{\max}}<x_{k}<x_{k_{\max}}$ a
thorough consideration of composition sets.

\begin{example}
\textbf{1)} For the limiting polyadic ring $\mathbb{Z}_{\left(  30,2\right)
}^{\left[  1,29\right]  }$ and $k_{\max}=10$ we have $x_{-10}=-289$,
$x_{10}=291$ and%
\begin{align}
\mathbb{S}_{\left(  30,2\right)  }^{\left[  1,29\right]  }\left(  10\right)
&  =\left\{  -260,-202,-173,-115,-86,-28,1,59,88,146,175,233,262\right\}  ,\\
\varphi_{\left(  30,2\right)  }^{\left[  1,29\right]  }\left(  10\right)   &
=13.
\end{align}

\textbf{2)} In another limiting case with ternary multiplication
$\mathbb{Z}_{\left(  33,3\right)  }^{\left[  31,32\right]  }$ we get
$x_{-5}=-129$, $x_{5}=191$ and%
\begin{align}
\mathbb{S}_{\left(  33,3\right)  }^{\left[  31,32\right]  }\left(  5\right)
&  =\left\{  -97,-65,-1,31,95,127\right\}  ,\\
\varphi_{\left(  30,2\right)  }^{\left[  1,29\right]  }\left(  5\right)   &
=6.
\end{align}

\textbf{3)} In the non-limiting case $\mathbb{Z}_{\left(  11,5\right)
}^{\left[  7,10\right]  }$ and $k_{\max}=10$ we have $x_{-10}=-93$,
$x_{10}=107$ with%
\begin{align}
\mathbb{S}_{\left(  11,5\right)  }^{\left[  7,10\right]  }\left(  10\right)
&  =\left\{  -83,-73,-53,-43,-23,-13,7,17,37,47,67,77,97\right\}  ,\\
\varphi_{\left(  11,5\right)  }^{\left[  7,10\right]  }\left(  10\right)   &
=13.
\end{align}

\textbf{4)} For the polyadic Euler function in some other non-limiting cases
we have%
\begin{equation}
\varphi_{\left(  50,15\right)  }^{\left[  27,49\right]  }\left(  7\right)
=6,\ \ \ \varphi_{\left(  39,10\right)  }^{\left[  17,38\right]  }\left(
20\right)  =21,\ \ \ \varphi_{\left(  8,4\right)  }^{\left[  16,28\right]
}\left(  30\right)  =\varphi_{\left(  26,6\right)  }^{\left[  46,50\right]
}\left(  15\right)  =0.
\end{equation}

\end{example}

\subsection{The parameters-to-arity mapping}

Let us consider the connection between congruence classes and arities in more detail.

\begin{remark}
a) Solutions to (\ref{maa}) and (\ref{ana}) do not exist simultaneously
\textsl{for all} $a$ and $b$; b) The pair $a$, $b$ determines $m$, $n$
\textsl{uniquely}; c) It can occur that for several different pairs $a$, $b$
there can be \textsl{the same} arities $m$, $n$.
\end{remark}

Therefore, we have

\begin{assertion}
\label{asser-abmn}The \textit{parameters-to-arity} \textit{mapping}%
\begin{equation}
\psi:\left(  a,b\right)  \longrightarrow\left(  m,n\right)  \label{psi}%
\end{equation}
is a partial surjection.
\end{assertion}

Here we list the lowest arities which can be obtained with different choices
of $\left(  a,b\right)  $.{\tiny
\begin{align*}
\left.
\begin{array}
[c]{c}%
m=3\\
n=2
\end{array}
\right\}   &  :\left(
\begin{array}
[c]{c}%
a=1\\
b=2
\end{array}
\right)  ,\left(
\begin{array}
[c]{c}%
a=3\\
b=6
\end{array}
\right)  ,\left(
\begin{array}
[c]{c}%
a=5\\
b=10
\end{array}
\right)  ,\left(
\begin{array}
[c]{c}%
a=7\\
b=14
\end{array}
\right)  ,\left(
\begin{array}
[c]{c}%
a=9\\
b=18
\end{array}
\right)  ,\left(
\begin{array}
[c]{c}%
a=11\\
b=22
\end{array}
\right)  ,\left(
\begin{array}
[c]{c}%
a=13\\
b=26
\end{array}
\right)  ,\left(
\begin{array}
[c]{c}%
a=15\\
b=30
\end{array}
\right)  ;\\
\left.
\begin{array}
[c]{c}%
m=4\\
n=2
\end{array}
\right\}   &  :\left(
\begin{array}
[c]{c}%
a=1\\
b=3
\end{array}
\right)  ,\left(
\begin{array}
[c]{c}%
a=4\\
b=6
\end{array}
\right)  ,\left(
\begin{array}
[c]{c}%
a=4\\
b=12
\end{array}
\right)  ,\left(
\begin{array}
[c]{c}%
a=10\\
b=15
\end{array}
\right)  ,\left(
\begin{array}
[c]{c}%
a=7\\
b=21
\end{array}
\right)  ,\left(
\begin{array}
[c]{c}%
a=16\\
b=24
\end{array}
\right)  ,\left(
\begin{array}
[c]{c}%
a=10\\
b=30
\end{array}
\right)  ,\left(
\begin{array}
[c]{c}%
a=22\\
b=33
\end{array}
\right)  ;\\
\left.
\begin{array}
[c]{c}%
m=4\\
n=3
\end{array}
\right\}   &  :\left(
\begin{array}
[c]{c}%
a=2\\
b=3
\end{array}
\right)  ,\left(
\begin{array}
[c]{c}%
a=2\\
b=6
\end{array}
\right)  ,\left(
\begin{array}
[c]{c}%
a=8\\
b=12
\end{array}
\right)  ,\left(
\begin{array}
[c]{c}%
a=14\\
b=21
\end{array}
\right)  ,\left(
\begin{array}
[c]{c}%
a=8\\
b=24
\end{array}
\right)  ,\left(
\begin{array}
[c]{c}%
a=20\\
b=30
\end{array}
\right)  ,\left(
\begin{array}
[c]{c}%
a=11\\
b=33
\end{array}
\right)  ,\left(
\begin{array}
[c]{c}%
a=14\\
b=42
\end{array}
\right)  ;\\
\left.
\begin{array}
[c]{c}%
m=5\\
n=2
\end{array}
\right\}   &  :\left(
\begin{array}
[c]{c}%
a=1\\
b=4
\end{array}
\right)  ,\left(
\begin{array}
[c]{c}%
a=9\\
b=12
\end{array}
\right)  ,\left(
\begin{array}
[c]{c}%
a=5\\
b=20
\end{array}
\right)  ,\left(
\begin{array}
[c]{c}%
a=21\\
b=28
\end{array}
\right)  ,\left(
\begin{array}
[c]{c}%
a=9\\
b=36
\end{array}
\right)  ,\left(
\begin{array}
[c]{c}%
a=33\\
b=44
\end{array}
\right)  ,\left(
\begin{array}
[c]{c}%
a=13\\
b=52
\end{array}
\right)  ,\left(
\begin{array}
[c]{c}%
a=45\\
b=60
\end{array}
\right)  ;\\
\left.
\begin{array}
[c]{c}%
m=5\\
n=3
\end{array}
\right\}   &  :\left(
\begin{array}
[c]{c}%
a=3\\
b=4
\end{array}
\right)  ,\left(
\begin{array}
[c]{c}%
a=3\\
b=12
\end{array}
\right)  ,\left(
\begin{array}
[c]{c}%
a=15\\
b=20
\end{array}
\right)  ,\left(
\begin{array}
[c]{c}%
a=7\\
b=28
\end{array}
\right)  ,\left(
\begin{array}
[c]{c}%
a=27\\
b=36
\end{array}
\right)  ,\left(
\begin{array}
[c]{c}%
a=39\\
b=52
\end{array}
\right)  ,\left(
\begin{array}
[c]{c}%
a=15\\
b=60
\end{array}
\right)  ,\left(
\begin{array}
[c]{c}%
a=51\\
b=68
\end{array}
\right)  .
\end{align*}
}

Although it has not been possible to derive a general formula for $\psi\left(  a,b\right)  $, this can be done in
some particular cases

\begin{proposition}
\textbf{1)} In the limiting cases ($a=1,b-1$) we have%
\begin{equation}
\psi\left(  1,b\right)  =\left(  b+1,2\right)  ,\ \ \ \ \ \ \ \ \psi\left(
b-1,b\right)  =\left(  b+1,3\right)  .
\end{equation}

\textbf{2)} If $a\mid b$, then%
\begin{equation}
\psi\left(  a,ad\right)  =\left(  d+1,\min\limits_{l}\log_{a}\left(
ld+1\right)  +1\right)  ,
\end{equation}
where $l$ is the smallest integer for which $\log$ takes its minimal integer value.

\textbf{3)} If $\gcd\left(  a,b\right)  =d$, then%
\begin{equation}
\psi\left(  a,b\right)  =\psi\left(  a_{0}d,b_{0}d\right)  =\left(
b_{0}+1,\min\limits_{l}\log_{a}\left(  l\dfrac{b_{0}}{a_{0}}+1\right)
+1\right)  ,
\end{equation}
with the same $l$.
\end{proposition}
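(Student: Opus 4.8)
The plan is to handle the two arities $m$ and $n$ separately, since by (\ref{maa}) and (\ref{ana}) they are simply the least integers $\geq2$ solving the two \emph{independent} congruences $ma\equiv a\ (\operatorname{mod}b)$ and $a^{n}\equiv a\ (\operatorname{mod}b)$. All three parts then become specializations of a single $\gcd$-reduction, so I would prove the general identities first and read off the special cases.

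First I would compute $m$. Condition (\ref{maa}) is equivalent to $b\mid\left(m-1\right)a$. Writing $d=\gcd\left(a,b\right)$ and $a=a_{0}d$, $b=b_{0}d$ with $\gcd\left(a_{0},b_{0}\right)=1$, cancelling $d$ and using coprimality turns this into $b_{0}\mid\left(m-1\right)$, so the least $m\geq2$ is $m=b_{0}+1$. This is the first coordinate in part 3. Part 1 follows because $\gcd\left(1,b\right)=\gcd\left(b-1,b\right)=1$ forces $b_{0}=b$, and part 2 follows because $a\mid b$ forces $d=a$ and $b_{0}=b/a=d$ (writing $b=ad$), whence $m=d+1$.

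Next I would compute $n$. Condition (\ref{ana}) is $b\mid a\left(a^{n-1}-1\right)$; with the same decomposition, cancelling $d$ and using $\gcd\left(a_{0},b_{0}\right)=1$ reduces it to $a^{n-1}\equiv1\ \left(\operatorname{mod}b_{0}\right)$. When $\gcd\left(a,b_{0}\right)=1$ the least positive exponent is $n-1=\operatorname{ord}_{b_{0}}\left(a\right)$, and otherwise the congruence has no solution — which is precisely where $\psi$ fails to be total, matching the partial-surjection claim of Assertion \ref{asser-abmn}. To cast the order in the stated form I would write the smallest solving power as $a^{n-1}=lb_{0}+1$ for the least admissible $l$, so that $n=\log_{a}\left(lb_{0}+1\right)+1$, the minimum being taken over those $l$ for which the argument is an exact power of $a$. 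Part 2 is the case $b_{0}=d$; in part 1 the value $a=1$ gives $a^{n-1}\equiv1$ for every $n$, so the minimal arity is $n=2$, while $a=b-1\equiv-1\ \left(\operatorname{mod}b\right)$ (here $b_{0}=b$) has order $2$ for $b\geq3$, giving $n=3$.

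The only fiddly point is reconciling the index conventions between parts 2 and 3: the honest exponent equation in part 3 is $a^{n-1}=l^{\prime}b_{0}+1$, whereas the statement writes $l\,\dfrac{b_{0}}{a_{0}}+1$, and these agree under the rescaling $l=a_{0}l^{\prime}$ (so that $l\,\tfrac{b_{0}}{a_{0}}=l^{\prime}b_{0}$), which is exactly what lets one speak of ``the same $l$'' as in part 2, where $a_{0}=1$. I expect the main obstacle to be purely bookkeeping: tracking the solvability condition $\gcd\left(a,b_{0}\right)=1$ and checking that the claimed minima over $l$ are actually attained (equivalently $\operatorname{ord}_{b_{0}}\left(a\right)\geq1$, so that $n\geq2$); the $\gcd$-cancellations and the resulting specializations are otherwise elementary.
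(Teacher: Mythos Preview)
Your proposal is correct and follows exactly the approach the paper intends: the paper's proof is a single sentence, ``All the statements follow directly from (\ref{maa})--(\ref{ana}),'' and you have simply carried out that reduction in full, including the $\gcd$-cancellation that yields $m=b_{0}+1$ and $a^{n-1}\equiv 1\pmod{b_{0}}$, the recognition of the non-solvable cases as the forbidden pairs, and the bookkeeping reconciliation $l=a_{0}l'$ between parts~2 and~3. There is nothing to add.
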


\begin{proof}
All the statements follow directly from (\ref{maa})--(\ref{ana}).
\end{proof}

In our approach, the concrete choice of operations (\ref{nu})--(\ref{mu})
inside a conguence class $\left[  \left[  a\right]  \right]  _{b}$ gives

\begin{assertion}
\label{asser-mn}The number of additions in the polyadic ring $\mathbb{Z}%
_{\left(  m,n\right)  }^{\left[  a,b\right]  }$ is greater that the number of
mutiplications $m>n$ with any choice of $\left(  a,b\right)  $.
\end{assertion}

Also, not all pairs $\left(  a,b\right)  $ are allowed due to (\ref{maa}%
)--(\ref{ana}). We list the \textsl{forbidden} $\left(  a,b\right)  $ for
$b\leq20$%
\begin{align}
b  &  =4:a=2;\nonumber\\
b  &  =8:a=2,4,6;\nonumber\\
b  &  =9:a=3,6;\nonumber\\
b  &  =12:a=2,6,10;\nonumber\\
b  &  =16:a=2,4,6,8,12,14;\nonumber\\
b  &  =18:a=3,6,12,15;\nonumber\\
b  &  =20:a=2,6,10,14,18. \label{b}%
\end{align}

The characterization of the fixed congruence class $\left[  \left[  a\right]
\right]  _{b}$ and the corresponding $\left(  m,n\right)  $-ring of polyadic
integer numbers $\mathbb{Z}_{\left(  m,n\right)  }^{\left[  a,b\right]  }$ can
be done in terms of the \textit{shape invariants} $I,J\in\mathbb{Z}_{+}$
defined uniquely by (\textsc{Table 3} in \cite{dup2017})%
\begin{equation}
I=I_{m}^{\left[  a,b\right]  }=\left(  m-1\right)  \dfrac{a}{b}%
,\ \ \ \ \ \ \ J=J_{n}^{\left[  a,b\right]  }=\dfrac{a^{n}-a}{b}. \label{ij}%
\end{equation}

Obviously, in the binary case, when $m=n=2$ ($a=0,b=1$) both shape invariants
vanish, $I=J=0$. Nevertheless, there exist \textquotedblleft
partially\textquotedblright\ binary cases, when only $n=2$ and $m\neq2$, while
$J$ is nonzero, for instance in $\mathbb{Z}_{\left(  6,2\right)  }^{\left[
6,10\right]  }$ we have $I=J=3$. In Example \ref{exam-34} for $\mathbb{Z}%
_{\left(  5,3\right)  }^{\left[  3,4\right]  }$ we have $I=3,\ J=6$. In the
limiting cases (\ref{xk1})--(\ref{xk2}) we have, in general, for a fixed $b$%
\begin{align}
I_{b+1}^{\left[  1,b\right]  }  &  =1,\ \ \ \ J_{2}^{\left[  1,b\right]
}=0,\label{ij1}\\
I_{b+1}^{\left[  b-1,b\right]  }  &  =b-1,\ \ \ \ J_{3}^{\left[  b-1,b\right]
}=\left(  b-1\right)  \left(  b-2\right)  . \label{ij2}%
\end{align}
Thus, one can classify and distinguish the limiting cases of the congruence
classes in terms of the invariants and their manifest form (\ref{ij1}%
)--(\ref{ij2}).

\section{\textsc{Finite polyadic rings}}

Now we present a special method of constructing a finite \textsl{nonderived}
polyadic ring by combining the \textquotedblleft external\textquotedblright%
\ and \textquotedblleft internal\textquotedblright\ methods. Let us
\textquotedblleft apply\textquotedblright\ \textbf{1)} to \textbf{2)}, such
that instead of (\ref{zb}), we introduce the finite polyadic ring
$\mathbb{Z}_{\left(  m,n\right)  }\diagup c\mathbb{Z}$, where $\mathbb{Z}%
_{\left(  m,n\right)  }$ is defined in (\ref{za}). However, if we directly
consider the \textquotedblleft double\textquotedblright\ class $\left\{
a+bk+cl\right\}  $ and fix $a$ and $b$, then the factorization by
$c\mathbb{Z}$ will not give closed operations for arbitrary $c$.

\begin{assertion}
If the finite polyadic ring $\mathbb{Z}_{\left(  m,n\right)  }^{\left[
a,b\right]  }\diagup c\mathbb{Z}$ has $q$ elements, then%
\begin{equation}
c=bq. \label{bq}%
\end{equation}

\end{assertion}

\begin{proof}
It follows from (\ref{bq}), that the \textquotedblleft
double\textquotedblright\ class remains in $\left[  \left[  a\right]  \right]
_{b}$.
\end{proof}

\begin{remark}
The representatives $x_{k}^{\left[  a,b\right]  }=a+bk$ belong to a $\left(
m,n\right)  $-ring with the polyadic operations (\ref{nu})--(\ref{mu}), while
the notions of subtraction, division, modulo and remainder are defined for
binary operations. Therefore, we cannot apply the standard binary modular
arithmetic to $x_{k}^{\left[  a,b\right]  }$ directly, but we can define the
equivalence relations and corresponding class operations in terms of $k$.
\end{remark}

\subsection{Secondary congruence classes}

On the set of the \textquotedblleft double\textquotedblright\ classes
$\left\{  a+bk+bql\right\}  $, $k,l\in\mathbb{Z}$ and fixed $b\in\mathbb{N}$
and $a=0,\ldots,b-1$ we define the equivalence relation $\overset{k}{\sim}$ by%
\begin{equation}
\left\{  a+bk_{1}+bql_{1}\right\}  \overset{k}{\sim}\left\{  a+bk_{2}%
+bql_{2}\right\}  \Longrightarrow k_{1}-k_{2}=ql,\ \ \ l,l_{1},l_{2}%
\in\mathbb{Z}. \label{kk}%
\end{equation}

\begin{proposition}
The equivalence relation $\overset{k}{\sim}$ is a congruence.
\end{proposition}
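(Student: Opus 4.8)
The goal is to verify that the relation $\overset{k}{\sim}$ defined in (\ref{kk}) is a congruence, meaning it is an equivalence relation that is compatible with the polyadic operations $\nu_m$ and $\mu_n$. The plan is to proceed in two stages. First I would confirm the three equivalence-relation axioms, and then I would check compatibility with both the $m$-ary addition and the $n$-ary multiplication, since a congruence on a polyadic ring must respect the full algebraic structure, not merely be an equivalence.

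For the equivalence axioms, I would work entirely in terms of the index $k$, as the Remark preceding the statement instructs: the representatives $x_k^{[a,b]} = a+bk$ carry all their arithmetic through $k$, and (\ref{kk}) says two double-classes are related exactly when $k_1 - k_2 = ql$ for some $l \in \mathbb{Z}$, i.e. $k_1 \equiv k_2 \pmod{q}$. Reflexivity ($k-k=0=q\cdot 0$), symmetry ($k_1-k_2=ql \Rightarrow k_2-k_1=q(-l)$), and transitivity ($k_1-k_2=ql,\ k_2-k_3=ql' \Rightarrow k_1-k_3=q(l+l')$) are then immediate, since these are just the defining properties of ordinary congruence of the integers $k$ modulo $q$. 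This reduces the equivalence part to the standard fact that $\equiv \pmod q$ is an equivalence on $\mathbb{Z}$.

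The substantive part is compatibility with operations. I would show that if $k_i \overset{k}{\sim} k_i'$ for $i=1,\dots,m$, then $\nu_m$ applied to the corresponding representatives yields related results, and similarly for $\mu_n$ with $n$ arguments. For addition, since $\nu_m[x_{k_1},\dots,x_{k_m}] = a+b\bigl(\sum_i k_i + (\text{shift depending on }a,b)\bigr)$ — the sum of $m$ representatives lands back in $[[a]]_b$ by (\ref{maa}), so its index is some explicit affine function of the $k_i$ — replacing each $k_i$ by $k_i + q l_i$ shifts the resulting index by a multiple of $q$, preserving the class. For multiplication one computes the index of $\mu_n[x_{k_1},\dots,x_{k_n}]$ as a polynomial in the $k_i$ (with the product again lying in $[[a]]_b$ by (\ref{ana})); substituting $k_i \mapsto k_i + ql_i$ changes that polynomial by terms each divisible by $q$, so the multiplicative result is again $\overset{k}{\sim}$-equivalent. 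The key point making both computations work is precisely the choice $c=bq$ from the preceding Assertion, which guarantees closure within $[[a]]_b$ so that the index functions are well-defined integers.

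The main obstacle I anticipate is the multiplicative compatibility: unlike addition, the index of an $n$-ary product is a genuine polynomial in the $k_i$ (cf. the explicit formulas (\ref{xxx})--(\ref{xxxx}) in the limiting cases), so I must verify that every monomial in the difference, after the substitution $k_i \mapsto k_i + ql_i$, carries at least one factor of $q$. I expect this to hold because each term in the expansion either is unchanged (the all-$k_i$ term) or contains at least one $ql_i$ factor, but confirming it cleanly in the general arity case — rather than just for $n=2,3$ — is where the care is needed. I would handle this by writing the product index schematically and grouping the difference $P(k_1+ql_1,\dots) - P(k_1,\dots)$ and observing that every surviving term is $q$-divisible, which is exactly the statement that $\overset{k}{\sim}$ is preserved.
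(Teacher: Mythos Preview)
Your proposal is correct and rests on the same reduction the paper uses: the relation $\overset{k}{\sim}$ is nothing but the ordinary binary congruence $k_1 \equiv k_2 \pmod q$ on the indices, from which the equivalence axioms are immediate. The paper's proof stops there in a single sentence, whereas you go further and explicitly verify compatibility with $\nu_m$ and $\mu_n$ by observing that the index of the result is a polynomial (affine for addition, genuinely polynomial for multiplication) in the $k_i$, so that shifting each $k_i$ by a multiple of $q$ shifts the output index by a multiple of $q$; the paper effectively defers this check to the subsequent \textbf{Proposition~\ref{prop-xk}}, where the well-definedness of the quotient operations (\ref{kadd})--(\ref{kmult}) is the same computation. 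Your concern about the general-arity multiplicative case is not a real obstacle: since the index of $\mu_n[x_{k_1},\dots,x_{k_n}]$ is an integer-coefficient polynomial in the $k_i$ (because $(a+bk_1)\cdots(a+bk_n) = a + b\cdot P(k_1,\dots,k_n)$ with $P \in \mathbb{Z}[k_1,\dots,k_n]$ by (\ref{ana})), the difference $P(k_i + ql_i) - P(k_i)$ is automatically divisible by $q$, so no arity-by-arity analysis is needed.
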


\begin{proof}
It follows from the \textsl{binary} congruency of $k$'s such that $k_{1}\equiv
k_{2}\left(  \operatorname{mod}q\right)  $ which is just a rewritten form of
the last condition of (\ref{kk}).
\end{proof}

So now we can factorize $\mathbb{Z}_{\left(  m,n\right)  }^{\left[
a,b\right]  }$ by the congruence $\overset{k}{\sim}$ and obtain

\begin{definition}
A \textit{secondary (equivalence) class} of a polyadic integer $x_{k}^{\left[
a,b\right]  }=a+bk\in\mathbb{Z}_{\left(  m,n\right)  }^{\left[  a,b\right]  }$
\textquotedblleft modulo\textquotedblright\ $bq$ (with $q$ being the number of
representatives $x_{k}^{\left[  a,b\right]  }$, for fixed $b\in\mathbb{N}$ and
$0\leq a\leq b-1$) is%
\begin{equation}
\left[  \left[  x_{k}^{\left[  a,b\right]  }\right]  \right]  _{bq}=\left\{
\left\{  \left(  a+bk\right)  +bql\right\}  \mid l\in\mathbb{Z},\ q\in
\mathbb{N},\ 0\leq k\leq q-1\right\}  . \label{abq}%
\end{equation}

\end{definition}

\begin{remark}
If the binary limit is given by $a=0,b=1$ and $\mathbb{Z}_{\left(  2,2\right)
}^{\left[  0,1\right]  }=\mathbb{Z}$, then the secondary class becomes the
ordinary class (\ref{ab}).
\end{remark}

If the values of the parameters $a,b,q$ are clear from the context, we denote
the secondary class representatives by an integer with \textsl{two} primes, as
follows $\left[  \left[  x_{k}^{\left[  a,b\right]  }\right]  \right]
_{bq}\equiv x_{k}^{\prime\prime}\equiv x^{\prime\prime}$.

\begin{example}
\textbf{a)} Let $a=3,b=5$, then for $q=4$ elements we have the secondary
classes with $k=0,1,2,3$ (the corresponding binary limits are in brackets)%
\begin{align}
\left[  \left[  x_{k}^{\left[  3,5\right]  }\right]  \right]  _{20}  &
=3^{\prime\prime},8^{\prime\prime},13^{\prime\prime},18^{\prime\prime
}=\left\{
\begin{array}
[c]{c}%
3^{\prime\prime}=\left\{  \ldots-17,3,23,43,63,\ldots\right\}  ,\\
8^{\prime\prime}=\left\{  \ldots-12,8,28,48,68,\ldots\right\}  ,\\
13^{\prime\prime}=\left\{  \ldots-7,13,33,53,73,\ldots\right\}  ,\\
18^{\prime\prime}=\left\{  \ldots-2,18,38,58,78,\ldots\right\}  ,
\end{array}
\right. \label{x35}\\
&  \left(  \left[  \left[  k\right]  \right]  _{4}=0^{\prime},1^{\prime
},2^{\prime},3^{\prime}=\left\{
\begin{array}
[c]{c}%
0^{\prime}=\left\{  \ldots-4,0,4,8,12,\ldots\right\}  ,\\
1^{\prime}=\left\{  \ldots-3,1,5,9,13,\ldots\right\}  ,\\
2^{\prime}=\left\{  \ldots-2,2,6,10,14,\ldots\right\}  ,\\
3^{\prime}=\left\{  \ldots-1,3,7,11,15,\ldots\right\}  .
\end{array}
\right.  \right)
\end{align}

\textbf{b)} For $a=3,b=6$ and for $4$ elements and $k=0,1,2,3$%
\begin{equation}
\left[  \left[  x_{k}^{\left[  3,6\right]  }\right]  \right]  _{24}%
=3^{\prime\prime},9^{\prime\prime},15^{\prime\prime},21^{\prime\prime
},\ \ \ \ \ \ \ \ \ \ \ \ \ \ \left(  \left[  \left[  k\right]  \right]
_{4}=0^{\prime},1^{\prime},2^{\prime},3^{\prime}\right)  . \label{x36}%
\end{equation}

\textbf{c)} If $a=4,b=5$, for $3$ elements and $k=0,1,2$ we get%
\begin{equation}
\left[  \left[  x_{k}^{\left[  4,5\right]  }\right]  \right]  _{15}%
=4^{\prime\prime},9^{\prime\prime},14^{\prime\prime}%
,\ \ \ \ \ \ \ \ \ \ \ \ \ \ \left(  \left[  \left[  k\right]  \right]
_{3}=0^{\prime},1^{\prime},2^{\prime}\right)  . \label{x45}%
\end{equation}

The crucial difference between these sets of classes are: 1) they are
described by rings of different arities determined by (\ref{maa}) and
(\ref{ana}); 2) some of them are fields.
\end{example}

\subsection{Finite polyadic rings of secondary classes}

Now we determine the operations between secondary classes. The most
significant difference with the binary class operations (\ref{a1})--(\ref{a2})
is the fact that secondary classes obey \textsl{nonderived} \textsl{polyadic} operations.

\begin{proposition}
\label{prop-xk}The set $\left\{  x_{k}^{\prime\prime}\right\}  $ of $q$
secondary classes $k=0,\ldots,q-1$ (with the fixed $a,b$) can be endowed with
the following commutative $m$-ary addition%
\begin{align}
x_{k_{add}}^{\prime\prime}  &  =\nu_{m}^{\prime\prime}\left[  x_{k_{1}%
}^{\prime\prime},x_{k_{1}}^{\prime\prime},\ldots,x_{k_{m}}^{\prime\prime
}\right]  ,\label{xadd}\\
k_{add}  &  \equiv\left(  \left(  k_{1}+k_{2}+\ldots+k_{m}\right)
+I_{m}^{\left[  a,b\right]  }\right)  \left(  \operatorname{mod}q\right)
\label{kadd}%
\end{align}
and commutative $n$-ary multiplication%
\begin{align}
x_{k_{mult}}^{\prime\prime}  &  =\mu_{n}^{\prime\prime}\left[  x_{k_{1}%
}^{\prime\prime},x_{k_{1}}^{\prime\prime},\ldots,x_{k_{n}}^{\prime\prime
}\right]  ,\label{xmult}\\
k_{mult}  &  \equiv\left(  a^{n-1}\left(  k_{1}+k_{2}+\ldots+k_{n}\right)
+a^{n-2}b\left(  k_{1}k_{2}+k_{2}k_{3}+\ldots+k_{n-1}k_{n}\right)
+\ldots\right. \nonumber\\
&  \left.  +b^{n-1}k_{1}\ldots k_{n}+J_{n}^{\left[  a,b\right]  }\right)
\left(  \operatorname{mod}q\right)  , \label{kmult}%
\end{align}
which satisfy the polyadic distributivity, and the shape invariants
$I_{m}^{\left[  a,b\right]  },J_{n}^{\left[  a,b\right]  }$ are defined in
(\ref{ij}).
\end{proposition}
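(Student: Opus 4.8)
The plan is to derive the class-operation formulas \eqref{kadd} and \eqref{kmult} directly from the underlying definitions of the polyadic operations $\nu_m,\mu_n$ in \eqref{nu}--\eqref{mu}, then verify that these operations are well-defined on secondary classes (i.e.\ respect the congruence $\overset{k}{\sim}$), and finally check polyadic distributivity. First I would take $m$ representatives $x_{k_i}^{\left[ a,b\right] }=a+bk_i$ and compute their $m$-ary sum using \eqref{nu}: since $\nu_m$ is the ordinary sum of $m$ terms, we get $\sum_{i=1}^m(a+bk_i)=ma+b\sum_i k_i$. The key observation is that by the closure condition \eqref{maa} we have $ma\equiv a\,(\operatorname{mod}b)$, so $ma=a+bI_m^{\left[ a,b\right] }$ with $I_m^{\left[ a,b\right] }=(m-1)a/b$ exactly the shape invariant from \eqref{ij}. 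Hence the sum equals $a+b\bigl(\sum_i k_i+I_m^{\left[ a,b\right] }\bigr)$, which is the representative with index $k_{add}=\sum_i k_i+I_m^{\left[ a,b\right] }$; reducing modulo $q$ (the number of secondary classes) yields \eqref{kadd}.

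Next I would handle multiplication analogously but with more bookkeeping. Taking the ordinary product $\prod_{i=1}^n(a+bk_i)$ and expanding it as a polynomial in $b$, the constant term is $a^n$, the coefficient of $b$ collects the elementary symmetric combinations $a^{n-1}\sum_i k_i$, and so on down to the top term $b^n k_1\cdots k_n$. I would then write $a^n=a+bJ_n^{\left[ a,b\right] }$ using the closure condition \eqref{ana} together with the definition $J_n^{\left[ a,b\right] }=(a^n-a)/b$ from \eqref{ij}. Factoring out one power of $b$ from every nonconstant term and comparing with the form $a+b\,k_{mult}$ then gives $k_{mult}$ as the bracketed expression in \eqref{kmult} (after dividing each term by the appropriate power of $b$ absorbed into the symmetric sums), reduced modulo $q$.

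The essential remaining point is well-definedness: I must confirm that if each $k_i$ is replaced by $k_i+q l_i$ then $k_{add}$ and $k_{mult}$ change only by multiples of $q$, so that the operations descend to the finite set of $q$ secondary classes. For addition this is immediate since $k_{add}$ is linear in the $k_i$. For multiplication it follows because $k_{mult}$ is a polynomial in the $k_i$ with integer coefficients, and \eqref{kmult} already reduces modulo $q$; shifting any $k_i$ by $q l_i$ shifts every monomial by a multiple of $q$. I expect this verification, though conceptually routine, to be the main technical obstacle, because one must check that the congruence $\overset{k}{\sim}$ of Proposition on \eqref{kk} is genuinely respected by the full multiplicative polynomial and not merely by its leading term.

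Finally, polyadic distributivity is inherited: since $\nu_m$ and $\mu_n$ in \eqref{nu}--\eqref{mu} are the ordinary sum and product of integers, which satisfy ordinary distributivity, the polyadic distributive law holds for the representatives in $\mathbb{Z}$, and passing to the quotient by the congruence preserves it. Thus $\bigl\{x_k^{\prime\prime}\bigr\}$ with operations \eqref{xadd} and \eqref{xmult} forms a commutative $\left( m,n\right)$-ring, completing the proof.
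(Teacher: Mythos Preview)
Your proposal is correct and follows exactly the route the paper's one-line proof gestures at: compute $\sum_{i}(a+bk_i)$ and $\prod_{i}(a+bk_i)$ explicitly, rewrite the constant terms via the shape invariants $I_m^{[a,b]},J_n^{[a,b]}$ from \eqref{ij}, and inherit commutativity and polyadic distributivity from the ordinary operations in $\mathbb{Z}$. You actually supply considerably more detail than the paper does---in particular the well-definedness check that shifting each $k_i$ by a multiple of $q$ leaves $k_{add}$ and $k_{mult}$ unchanged modulo $q$---which the paper's proof leaves implicit.
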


\begin{proof}
This follows from the definition of the secondary class (\ref{abq}) and
manifest form of the \textquotedblleft underlying\textquotedblright\ polyadic
operations (\ref{nu})--(\ref{mu}), which are commutative and distributive.
\end{proof}

\begin{remark}
The binary limit is given by $a=0,b=1$ and $m=n=2$, $I_{m}^{\left[
a,b\right]  }=J_{n}^{\left[  a,b\right]  }=0$, such that the secondary class
becomes the ordinary congruence class $x_{k}^{\prime\prime}\rightarrow
k^{\prime}$, obeying the standard binary class operations (\ref{a1}%
)--(\ref{a2}), which in terms of $k$ are $k_{add}\equiv\left(  k_{1}%
+k_{2}\right)  \left(  \operatorname{mod}q\right)  $, $k_{mult}\equiv\left(
k_{1}k_{2}\right)  \left(  \operatorname{mod}q\right)  $.
\end{remark}

\begin{definition}
The set of secondary classes (\ref{abq}) equipped with operations
(\ref{xadd}), (\ref{xmult}) is denoted by%
\begin{equation}
\mathbb{Z}_{\left(  m,n\right)  }\left(  q\right)  \equiv\mathbb{Z}_{\left(
m,n\right)  }^{\left[  a,b\right]  }\left(  q\right)  =\mathbb{Z}_{\left(
m,n\right)  }^{\left[  a,b\right]  }\diagup\left(  bq\right)  \mathbb{Z=}%
\left\{  \left\{  x_{k}^{\prime\prime}\right\}  \mid\nu_{m}^{\prime\prime}%
,\mu_{n}^{\prime\prime}\right\}  , \label{zmn}%
\end{equation}
and is a \textit{finite secondary class} $\left(  m,n\right)  $-\textit{ring}
of polyadic integer numbers $\mathbb{Z}_{\left(  m,n\right)  }\equiv
\mathbb{Z}_{\left(  m,n\right)  }^{\left[  a,b\right]  }$. The value $q$ (the
number of elements) is called its \textit{order}.
\end{definition}

Informally, $\mathbb{Z}_{\left(  m,n\right)  }=\mathbb{Z}_{\left(  m,n\right)
}\left(  \infty\right)  $. First, note that the constructed finite $\left(
m,n\right)  $-rings (\ref{zmn}) have a much richer structure and exotic
properties which do not exist in the binary finite residue class rings
(\ref{zb}), and, in general, they give many concrete examples for possible
different kinds polyadic rings. One of such \textquotedblleft
non-binary\textquotedblright\ properties is the availability of several units
(for odd multiplicative arity $n$), and moreover sometimes all ring elements
are units (such rings are \textquotedblleft automatically\textquotedblright%
\ fields, see below).

\begin{example}
\textbf{a)} In $\left(  5,3\right)  $-ring $\mathbb{Z}_{\left(  4,3\right)
}^{\left[  3,4\right]  }\left(  2\right)  $ with 2 secondary classes both
elements are units (we mark units by subscript $e$) $e_{1}=3_{e}^{\prime
\prime}=3^{\prime\prime}$, $e_{2}=7_{e}^{\prime\prime}=7^{\prime\prime}$,
because they are both multiplicative idempotents and satisfy the following
ternary multiplication (cf. (\ref{e}))%
\begin{equation}
\mu_{3}\left[  3^{\prime\prime},3^{\prime\prime},3^{\prime\prime}\right]
=3^{\prime\prime},\ \mu_{3}\left[  3^{\prime\prime},3^{\prime\prime}%
,7^{\prime\prime}\right]  =7^{\prime\prime},\ \mu_{3}\left[  3^{\prime\prime
},7^{\prime\prime},7^{\prime\prime}\right]  =3^{\prime\prime},\ \mu_{3}\left[
7^{\prime\prime},7^{\prime\prime},7^{\prime\prime}\right]  =7^{\prime\prime}.
\end{equation}

\textbf{b)} In the same way the ring $\mathbb{Z}_{\left(  7,3\right)
}^{\left[  5,6\right]  }\left(  4\right)  $ consists of only 4 units
$e_{1}=5_{e}^{\prime\prime},e_{2}=11_{e}^{\prime\prime},e_{3}=17_{e}%
^{\prime\prime},e_{4}=23_{e}^{\prime\prime}$, and no zero.

\textbf{c)} Equal arity rings of the same order may be not isomorphic. For
instance, $\mathbb{Z}_{\left(  4,2\right)  }^{\left[  1,3\right]  }\left(
2\right)  $ consists of unit $e=1_{e}^{\prime\prime}=1^{\prime\prime}$ and
zero $z=4_{z}^{\prime\prime}=4^{\prime\prime}$ only, satisfying%
\begin{equation}
\mu_{2}\left[  1^{\prime\prime},1^{\prime\prime}\right]  =1^{\prime\prime
},\ \mu_{2}\left[  1^{\prime\prime},4^{\prime\prime}\right]  =4^{\prime\prime
},\ \mu_{2}\left[  4^{\prime\prime},4^{\prime\prime}\right]  =4^{\prime\prime
},
\end{equation}
and therefore $\mathbb{Z}_{\left(  4,2\right)  }^{\left[  1,3\right]  }\left(
2\right)  $ is a field, because $\left\{  1^{\prime\prime},4_{z}^{\prime
\prime}\right\}  \setminus4_{z}^{\prime\prime}$ is a (trivial) binary group,
consisting of one element $1_{e}^{\prime\prime}$. However, $\mathbb{Z}%
_{\left(  4,2\right)  }^{\left[  4,6\right]  }\left(  2\right)  $ has the zero
$z=4_{z}^{\prime\prime}=4^{\prime\prime}$, $10^{\prime\prime}$ and has no
unit, because%
\begin{equation}
\mu_{2}\left[  4^{\prime\prime},4^{\prime\prime}\right]  =4^{\prime\prime
},\ \mu_{2}\left[  4^{\prime\prime},10^{\prime\prime}\right]  =4^{\prime
\prime},\ \mu_{2}\left[  10^{\prime\prime},10^{\prime\prime}\right]
=4^{\prime\prime},
\end{equation}
so that $\mathbb{Z}_{\left(  4,2\right)  }^{\left[  4,6\right]  }\left(
2\right)  $ is not a field, because of the last relation (nilpotency of
$10^{\prime\prime}$). Their additive $4$-ary groups are also not isomorphic
(which is easy to show). However, $\mathbb{Z}_{\left(  4,2\right)  }^{\left[
1,3\right]  }\left(  2\right)  $ and $\mathbb{Z}_{\left(  4,2\right)
}^{\left[  4,6\right]  }\left(  2\right)  $ have the same arity and order.
\end{example}

Recalling \textbf{Assertion \ref{asser-abmn}}, we conclude more concretely:

\begin{assertion}
For a fixed arity shape $\left(  m,n\right)  $, there can be non-isomorphic
secondary class polyadic rings $\mathbb{Z}_{\left(  m,n\right)  }\left(
q\right)  $ of the same order $q$, which describe different binary residue
classes $\left[  \left[  a\right]  \right]  _{b}$.
\end{assertion}

A polyadic analog of the characteristic can be introduced, when there exist
both a unit and zero in a finite ring. Recall, that if $\mathcal{R}$ is a
finite binary ring with unit$1$ and zero $0$, then its characteristic is
defined as a smallest integer $\chi$, such that
\begin{equation}
\left(  \overset{\chi}{\overbrace{1+1+,\ldots,+1}}\right)  =\chi\cdot1=0.
\label{11}%
\end{equation}
This means that the \textquotedblleft number\textquotedblright\ of unit
additions being $\chi-1$ produces zero. The same is evident for any other
element $x\in\mathcal{R}$, because $x=x\cdot1$.

\begin{definition}
For the finite polyadic ring $\mathbb{Z}_{\left(  m,n\right)  }\left(
q\right)  $ which contains both the unit $e$ and the zero $z$, a
\textit{polyadic characteristic} $\chi_{p}$ is defined as a smallest additive
polyadic power (\ref{xx}) of $e$ which is equal to zero%
\begin{equation}
e^{\left\langle \chi_{p}\right\rangle _{+m}}=z. \label{ez}%
\end{equation}

\end{definition}

In the binary limit, obviously, $\chi_{p}=\chi-1$. A polyadic analog of the
middle term in (\ref{11}) can be obtained by using the polyadic distributivity
and (\ref{nu}) as%
\begin{equation}
e^{\left\langle \chi_{p}\right\rangle _{+m}}=e^{\left\langle \chi_{p}\left(
m-1\right)  +1\right\rangle _{\times n}}. \label{ee}%
\end{equation}

In \textsc{Table \ref{T0}} we present the parameters-to-arity mapping\textit{
}$\psi_{\left(  m,n\right)  }^{\left[  a,b\right]  }$ (\ref{psi}) together
with the polyadic characteristics of those finite secondary class rings
$\mathbb{Z}_{\left(  m,n\right)  }^{\left[  a,b\right]  }\left(  q\right)  $
which contain both unit(s) and zero, and which have order less or equal than
$10$ for $b\leq6$.

\begin{table}[pb]
\caption[Polyadic characteristics of finite $\left(  m,n\right)  $%
-rings]{Polyadic characteristics $\chi_{p}$ for the finite secondary class
$\left(  m,n\right)  $-rings $\mathbb{Z}_{m,n}^{\left[  a,b\right]  }\left(
q\right)  $ of order $2\leq q\leq10$ for $2\leq b\leq6$. The orders $q$ which
do not give fields are \textsl{slanted}.}%
\label{T0}
\begin{center}
\begin{tabular}
[c]{||c||c|c|c|c|c|}\hline\hline
$a\setminus b$ & 2 & 3 & 4 & 5 & 6\\\hline\hline
1 & $%
\begin{array}
[c]{c}%
m=\mathbf{3}\\
n=\mathbf{2}\\
\text{\textsf{q=3}},\chi_{p}=1\\
\text{\textsf{q=5}},\chi_{p}=2\\
\text{\textsf{q=7}},\chi_{p}=3\\
\text{\textsl{q=9}},\chi_{p}=4
\end{array}
$ & $%
\begin{array}
[c]{c}%
m=\mathbf{4}\\
n=\mathbf{2}\\
\text{\textsf{q=2}},\chi_{p}=1\\
\text{\textsl{q=4}},\chi_{p}=1\\
\text{\textsf{q=5}},\chi_{p}=3\\
\text{\textsf{q=7}},\chi_{p}=2\\
\text{\textsl{q=8}},\chi_{p}=5\\
\text{\textsl{q=10}},\chi_{p}=3
\end{array}
$ & $%
\begin{array}
[c]{c}%
m=\mathbf{5}\\
n=\mathbf{2}\\
\text{\textsf{q=3}},\chi_{p}=2\\
\text{\textsf{q=5}},\chi_{p}=1\\
\text{\textsf{q=7}},\chi_{p}=5\\
\text{\textsl{q=9}},\chi_{p}=2
\end{array}
$ & $%
\begin{array}
[c]{c}%
m=\mathbf{6}\\
n=\mathbf{2}\\
\text{\textsf{q=2}},\chi_{p}=1\\
\text{\textsf{q=3}},\chi_{p}=1\\
\text{\textsl{q=4}},\chi_{p}=3\\
\text{\textsl{q=6}},\chi_{p}=1\\
\text{\textsf{q=7}},\chi_{p}=4\\
\text{\textsl{q=8}},\chi_{p}=3\\
\text{\textsl{q=9}},\chi_{p}=7
\end{array}
$ & $%
\begin{array}
[c]{c}%
m=\mathbf{7}\\
n=\mathbf{2}\\
\text{\textsf{q=5}},\chi_{p}=4\\
\text{\textsf{q=7}},\chi_{p}=1
\end{array}
$\\\hline
2 &  & $%
\begin{array}
[c]{c}%
m=\mathbf{4}\\
n=\mathbf{3}\\
\text{\textsf{q=2}},\chi_{p}=1\\
\text{\textsl{q=4}},\chi_{p}=1\\
\text{\textsf{q=5}},\chi_{p}=3\\
\text{\textsf{q=7}},\chi_{p}=2\\
\text{\textsl{q=8}},\chi_{p}=5\\
\text{\textsl{q=10}},\chi_{p}=3
\end{array}
$ &  & $%
\begin{array}
[c]{c}%
m=\mathbf{6}\\
n=\mathbf{5}\\
\text{\textsf{q=2}},\chi_{p}=1\\
\text{\textsf{q=3}},\chi_{p}=1\\
\text{\textsl{q=4}},\chi_{p}=3\\
\text{\textsl{q=6}},\chi_{p}=1\\
\text{\textsf{q=7}},\chi_{p}=4\\
\text{\textsl{q=8}},\chi_{p}=3\\
\text{\textsl{q=9}},\chi_{p}=7
\end{array}
$ & $%
\begin{array}
[c]{c}%
m=\mathbf{4}\\
n=\mathbf{3}\\
\text{\textsf{q=5}},\chi_{p}=3\\
\text{\textsf{q=7}},\chi_{p}=2\\
\text{\textsl{q=10}},\chi_{p}=3
\end{array}
$\\\hline
3 &  &  & $%
\begin{array}
[c]{c}%
m=\mathbf{5}\\
n=\mathbf{3}\\
\text{\textsf{q=3}},\chi_{p}=2\\
\text{\textsf{q=5}},\chi_{p}=1\\
\text{\textsf{q=7}},\chi_{p}=5\\
\text{\textsl{q=9}},\chi_{p}=2
\end{array}
$ & $%
\begin{array}
[c]{c}%
m=\mathbf{6}\\
n=\mathbf{5}\\
\text{\textsf{q=2}},\chi_{p}=1\\
\text{\textsf{q=3}},\chi_{p}=1\\
\text{\textsl{q=4}},\chi_{p}=3\\
\text{\textsl{q=6}},\chi_{p}=1\\
\text{\textsf{q=7}},\chi_{p}=4\\
\text{\textsl{q=8}},\chi_{p}=3
\end{array}
$ & $%
\begin{array}
[c]{c}%
m=\mathbf{3}\\
n=\mathbf{2}\\
\text{\textsf{q=5}},\chi_{p}=2\\
\text{\textsf{q=7}},\chi_{p}=3
\end{array}
$\\\hline
4 &  &  &  & $%
\begin{array}
[c]{c}%
m=\mathbf{6}\\
n=\mathbf{3}\\
\text{\textsf{q=2}},\chi_{p}=1\\
\text{\textsf{q=3}},\chi_{p}=1\\
\text{\textsl{q=4}},\chi_{p}=3\\
\text{\textsl{q=6}},\chi_{p}=1\\
\text{\textsf{q=7}},\chi_{p}=4\\
\text{\textsl{q=8}},\chi_{p}=3\\
\text{\textsl{q=9}},\chi_{p}=7
\end{array}
$ & $%
\begin{array}
[c]{c}%
m=\mathbf{4}\\
n=\mathbf{2}\\
\text{\textsf{q=5}},\chi_{p}=3\\
\text{\textsf{q=7}},\chi_{p}=2\\
\text{\textsl{q=10}},\chi_{p}=3
\end{array}
$\\\hline
5 &  &  &  &  & $%
\begin{array}
[c]{c}%
m=\mathbf{7}\\
n=\mathbf{3}\\
\text{\textsf{q=5}},\chi_{p}=4\\
\text{\textsf{q=7}},\chi_{p}=1
\end{array}
$\\\hline\hline
\end{tabular}
\end{center}
\end{table}

Now we turn to the question of which secondary classes can be described by
polyadic finite fields.

\section{\textsc{Finite polyadic fields}}

Let us consider the structure of the finite secondary class rings
$\mathbb{Z}_{\left(  m,n\right)  }^{\left[  a,b\right]  }\left(  q\right)  $
in more detail and determine which of them are polyadic fields.

\begin{proposition}
A finite polyadic ring $\mathbb{Z}_{\left(  m,n\right)  }^{\left[  a,b\right]
}\left(  q\right)  $ is a \textit{secondary class finite }$\left(  m,n\right)
$-\textit{field} $\mathbb{F}_{\left(  m,n\right)  }^{\prime\prime\left[
a,b\right]  }\left(  q\right)  $ if all its elements except $z$ (if it exists)
are polyadically multiplicative invertible having a unique querelement.
\end{proposition}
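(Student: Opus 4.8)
The plan is to reduce the claim directly to Definition \ref{def-fmn}, according to which an $(m,n)$-field is precisely a commutative $(m,n)$-division ring, that is, one for which $\langle R^{\ast}\mid\mu_{n}^{\prime\prime}\rangle$ is an $n$-ary group, where $R^{\ast}=R\setminus\{z\}$ if the zero exists and $R^{\ast}=R$ otherwise. By Proposition \ref{prop-xk} the secondary-class operations $\nu_{m}^{\prime\prime},\mu_{n}^{\prime\prime}$ are commutative and polyadically distributive, and $\langle R\mid\nu_{m}^{\prime\prime}\rangle$ is already a commutative $m$-ary group by construction. Hence the entire burden of the argument falls on showing that the multiplicative part restricted to the nonzero classes is an $n$-ary group.

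First I would record that $\langle R\mid\mu_{n}^{\prime\prime}\rangle$ is an $n$-ary semigroup: associativity of $\mu_{n}^{\prime\prime}$ is inherited from ordinary integer multiplication through the class law \eqref{kmult}, exactly as distributivity was. Next I would pass to $R^{\ast}$ and invoke the criterion quoted in the Preliminaries: an associative $n$-ary operation in which every element possesses a unique querelement is an $n$-ary group. The hypothesis supplies precisely this querability for each element of $R^{\ast}$, and one checks that the querelement of a nonzero element is again nonzero, since $\mu_{n}^{\prime\prime}\bigl[x^{n-1},z\bigr]=z\neq x$ forces $\bar{x}\neq z$ whenever $x\neq z$.

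The step I expect to be the real obstacle is closure of $\mu_{n}^{\prime\prime}$ on $R^{\ast}$, that is, the absence of zero divisors: a priori a product of nonzero secondary classes might equal $z$, in which case $R^{\ast}$ would not even carry an $n$-ary operation. Here I would exploit that querability entails cancellativity, together with the finiteness of $\mathbb{Z}_{\left(m,n\right)}^{\left[a,b\right]}\left(q\right)$. Concretely, suppose $\mu_{n}^{\prime\prime}\bigl[x_{1},\ldots,x_{n}\bigr]=z$ with every $x_{i}\neq z$; acting with the querelement $\bar{x}_{1}$ of $x_{1}$ and using commutativity and associativity lets me cancel $x_{1}$, reducing to a shorter admissible product that again equals $z$, and iterating this forces one of the remaining factors to equal $z$, a contradiction. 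This rules out zero divisors and secures the closure of $\mu_{n}^{\prime\prime}$ on $R^{\ast}$.

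Once closure is in hand, the querelement criterion yields that $\langle R^{\ast}\mid\mu_{n}^{\prime\prime}\rangle$ is an $n$-ary group, and commutativity transfers from $R$ to $R^{\ast}$ verbatim. Therefore $\mathbb{Z}_{\left(m,n\right)}^{\left[a,b\right]}\left(q\right)$ is a commutative $(m,n)$-division ring, which by Definition \ref{def-fmn} is exactly the secondary-class finite $(m,n)$-field $\mathbb{F}_{\left(m,n\right)}^{\prime\prime\left[a,b\right]}\left(q\right)$, completing the argument.
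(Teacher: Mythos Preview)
Your overall strategy matches the paper's: both reduce the claim to Definition~\ref{def-fmn} by verifying that $\langle R^{\ast}\mid\mu_{n}^{\prime\prime}\rangle$ is a commutative $n$-ary group, invoking Proposition~\ref{prop-xk} for commutativity and distributivity. The paper's own proof is terse, simply asserting that the group and cancellativity properties ``follow from the concrete form of multiplication \eqref{xmult}''; you attempt instead to establish closure abstractly from the querelement hypothesis.

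That abstract closure argument is where your proposal has a genuine gap. The phrase ``reducing to a shorter admissible product'' does not make sense in an $n$-ary semigroup: there is no $(n-1)$-ary product to which $\mu_{n}^{\prime\prime}[x_{2},\ldots,x_{n}]$ could refer, and multiplying by $\bar{x}_{1}$ produces another length-$n$ product, not a shorter one. Concretely, from $\mu_{n}^{\prime\prime}[x_{1},\ldots,x_{n}]=z$ one can only obtain identities of the form $\mu_{n}^{\prime\prime}[\bar{x}_{1},x_{1}^{n-2},z]=z$, which is vacuous once $z$ is absorbing, so no iteration gets started. The paper avoids this trap by not arguing abstractly at all: it points to the explicit formula \eqref{kmult}, in which the secondary-class multiplication is ordinary integer multiplication reduced $\operatorname{mod} bq$, so cancellativity and the absence of zero divisors on $R^{\ast}$ can be read off directly from the arithmetic of the representatives. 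If you want to repair your route, replace the cancellation-by-iteration step with this appeal to the concrete description, or else prove cancellativity on all of $R$ first (via the explicit formula) and deduce closure of $R^{\ast}$ from that.
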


\begin{proof}
In both cases $\left\{  \left\{  x_{k}^{\prime\prime}\right\}  \mid\mu
_{n}^{\prime\prime}\right\}  $ and $\left\{  \left\{  x_{k}^{\prime\prime
}\setminus z\right\}  \mid\mu_{n}^{\prime\prime}\right\}  $ are commutative
and cancellative $n$-ary groups, which follows from the concrete form of
multiplication (\ref{xmult}). Therefore, according to \textbf{Definition
\ref{def-fmn}}, in such a case $\mathbb{Z}_{\left(  m,n\right)  }^{\left[
a,b\right]  }\left(  q\right)  $ becomes a polyadic field $\mathbb{F}_{\left(
m,n\right)  }^{\prime\prime\left[  a,b\right]  }\left(  q\right)  $.
\end{proof}

\subsection{Abstract finite polyadic fields}

In the binary case \cite{lid/nie} the residue (congruence) class ring
(\ref{zb}) with $q$ elements $\mathbb{Z}\diagup q\mathbb{Z}$ is a congruence
class (non-extended) field, if its order $q=p$ is a prime number, such that
$\mathbb{F}^{\prime}\left(  p\right)  =\left\{  \left\{  \left[  \left[
a\right]  \right]  _{p}\right\}  \mid+^{\prime},\cdot^{\prime},0^{\prime
},1^{\prime}\right\}  $, $a=0,1,\ldots,p-1$. Because all non-extended binary
fields of a fixed prime order $p$ are isomorphic each other and, in tern,
isomorphic to the congruence class field $\mathbb{F}^{\prime}\left(  p\right)
$, it is natural to study them in a more \textquotedblleft
abstract\textquotedblright\ way, i.e. without connection to a specific
congruence class structure. This can be achieved by consideration of the
one-to-one onto mapping from the congruence class to its representative which
preserves the field (ring) structure and provides operations (binary
multiplication and addition with ordinary $0$ and $1$) by modulo $p$. In other
words, the mapping $\Phi_{p}\left(  \left[  \left[  a\right]  \right]
_{p}\right)  =a$ is an isomorphism of binary fields $\Phi_{p}:\mathbb{F}%
^{\prime}\left(  p\right)  \rightarrow\mathbb{F}\left(  p\right)  $, where
$\mathbb{F}\left(  p\right)  =\left\{  \left\{  a\right\}  \mid+,\cdot
,0,1\right\}  _{\operatorname{mod}p}$ is an \textquotedblleft
abstract\textquotedblright\ non-extended (prime) finite field of order $p$ (or
\textit{Galois field} $GF\left(  p\right)  $).

In a similar way, we introduce a polyadic analog of the \textquotedblleft
abstract\textquotedblright\ binary non-extended (prime) finite fields. Let us
consider the set of \textsl{polyadic integer numbers} $\left\{  x_{k}\right\}
\equiv\left\{  x_{k}^{\left[  a,b\right]  }\right\}  =\left\{  a+bk\right\}
\in\mathbb{Z}_{\left(  m,n\right)  }^{\left[  a,b\right]  }$, $b\in\mathbb{N}$
and $0\leq a\leq b-1$,$\ 0\leq k\leq q-1$, $q\in\mathbb{N}$, which obey the
operations (\ref{nu})--(\ref{mu}). The polyadic version of the prime finite
field $\mathbb{F}\left(  p\right)  $ of order $p$ (or Galois field $GF\left(
p\right)  $) is given by

\begin{definition}
The \textquotedblleft abstract\textquotedblright\ \textit{non-extended (prime)
finite} $\left(  m,n\right)  $-\textit{field of order }$q$ is
\begin{equation}
\mathbb{F}_{\left(  m,n\right)  }\left(  q\right)  \equiv\mathbb{F}_{\left(
m,n\right)  }^{\left[  a,b\right]  }\left(  q\right)  =\left\{  \left\{
a+bk\right\}  \mid\nu_{m},\mu_{n}\right\}  _{\operatorname{mod}bq},
\label{fmn}%
\end{equation}
if $\left\{  \left\{  x_{k}\right\}  \mid\nu_{m}\right\}  _{\operatorname{mod}%
bq}$ is an additive $m$-ary group, and $\left\{  \left\{  x_{k}\right\}
\mid\mu_{n}\right\}  _{\operatorname{mod}bq}$ (or, when zero $z$ exists,
$\left\{  \left\{  x_{k}\setminus z\right\}  \mid\mu_{n}\right\}
_{\operatorname{mod}bq}$) is a multiplicative $n$-ary group.
\end{definition}

Then we define a one-to-one onto mapping from the secondary congruence class
to its representative by $\Phi_{q}^{\left[  a,b\right]  }\left(  \left[
\left[  x_{k}^{\left[  a,b\right]  }\right]  \right]  _{bq}\right)
=x_{k}^{\left[  a,b\right]  }$ and arrive at the following

\begin{proposition}
The mapping $\Phi_{q}^{\left[  a,b\right]  }:\mathbb{F}_{\left(  m,n\right)
}^{\prime\prime\left[  a,b\right]  }\left(  q\right)  \rightarrow
\mathbb{F}_{\left(  m,n\right)  }^{\left[  a,b\right]  }\left(  q\right)  $ is
a polyadic ring homomorphism (being, in fact, an isomorphism) and satisfies
(here we use the \textquotedblleft prime\textquotedblright\ notations)%
\begin{align}
\Phi_{q}^{\left[  a,b\right]  }\left(  \nu_{m}^{\prime\prime}\left[
x_{1}^{\prime\prime},x_{2}^{\prime\prime},\ldots,x_{m}^{\prime\prime}\right]
\right)   &  =\nu_{m}\left[  \Phi_{q}^{\left[  a,b\right]  }\left(
x_{1}\right)  ,\Phi_{q}^{\left[  a,b\right]  }\left(  x_{2}\right)
,\ldots,\Phi_{q}^{\left[  a,b\right]  }\left(  x_{m}\right)  \right]  ,\\
\Phi_{q}^{\left[  a,b\right]  }\left(  \mu_{n}^{\prime\prime}\left[
x_{1}^{\prime\prime},x_{2}^{\prime\prime},\ldots,x_{n}^{\prime\prime}\right]
\right)   &  =\mu_{n}\left[  \Phi_{q}^{\left[  a,b\right]  }\left(
x_{1}\right)  ,\Phi_{q}^{\left[  a,b\right]  }\left(  x_{2}\right)
,\ldots,\Phi_{q}^{\left[  a,b\right]  }\left(  x_{n}\right)  \right]  .
\end{align}

\end{proposition}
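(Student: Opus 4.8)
The plan is to establish three things in turn: that $\Phi_q^{[a,b]}$ is well defined and bijective, that it intertwines the two $m$-ary additions and the two $n$-ary multiplications, and that these facts combine to give an isomorphism.

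First I would observe that well-definedness and bijectivity are essentially built into the definition of the secondary class (\ref{abq}): each secondary class $\left[\left[x_k^{[a,b]}\right]\right]_{bq}$ is indexed by a unique $k$ with $0\le k\le q-1$, and its canonical representative $x_k^{[a,b]}=a+bk$ is exactly the corresponding element of $\mathbb{F}_{(m,n)}^{[a,b]}(q)$ in (\ref{fmn}). Hence $\Phi_q^{[a,b]}$ is a bijection between the $q$ secondary classes and the $q$ representatives. I would also record here that the operations on $\mathbb{F}_{(m,n)}^{[a,b]}(q)$ are closed modulo $bq$: by the closure conditions (\ref{maa})--(\ref{ana}), any admissible $\nu_m$ or $\mu_n$ of representatives remains in $\left[\left[a\right]\right]_b$, hence is again of the form $a+bk'$ with $0\le k'\le q-1$ after reduction modulo $bq$.

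The heart of the argument is the intertwining property, which I would verify by expanding the underlying operations (\ref{nu})--(\ref{mu}) on representatives and comparing with the secondary-class formulas (\ref{kadd}) and (\ref{kmult}). For addition, $\nu_m\left[a+bk_1,\ldots,a+bk_m\right]=ma+b\sum_i k_i$; writing $ma=a+bI_m^{[a,b]}$ via (\ref{ij}) turns this into $a+b\left(\sum_i k_i+I_m^{[a,b]}\right)$, which reduces modulo $bq$ to $a+bk_{add}$ with $k_{add}$ exactly as in (\ref{kadd}). For multiplication, I would expand $\prod_i(a+bk_i)$ by the distributive law, collect terms according to the number of factors $bk_i$, and use $a^n=a+bJ_n^{[a,b]}$ from (\ref{ij}); the resulting coefficient of $b$ then matches $k_{mult}$ in (\ref{kmult}) term by term, so reduction modulo $bq$ yields $a+bk_{mult}$. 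Since the secondary-class operations $\nu_m''$, $\mu_n''$ were defined in \textbf{Proposition \ref{prop-xk}} as precisely these reductions, the two displayed homomorphism identities follow.

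The main obstacle is the bookkeeping in the multiplicative identity: one must check that the full expansion of $\prod_i(a+bk_i)$ collapses, after the substitution $a^n=a+bJ_n^{[a,b]}$, into $a$ plus $b$ times the exact polynomial of (\ref{kmult}), including the mixed symmetric terms $a^{n-2}b\left(k_1k_2+\ldots\right)$ and the higher ones. This is routine once (\ref{kmult}) is in hand, so I would cite \textbf{Proposition \ref{prop-xk}} rather than rederive it. Finally, combining bijectivity with the two intertwining identities shows that $\Phi_q^{[a,b]}$ is a bijective polyadic ring homomorphism, hence an isomorphism, which completes the proof.
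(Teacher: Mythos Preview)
Your proposal is correct and follows essentially the same approach as the paper: both argue that the homomorphism identities follow from comparing the underlying operations (\ref{nu})--(\ref{mu}) on representatives with the secondary-class operations (\ref{xadd})--(\ref{xmult}), and that bijectivity comes from the one-to-one correspondence $x_k^{\prime\prime}\mapsto x_k$ for $0\le k\le q-1$. The only difference is that the paper states these facts tersely (``follows directly from \ldots''), whereas you spell out the intermediate computations with the shape invariants $I_m^{[a,b]}$ and $J_n^{[a,b]}$.
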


\begin{proof}
This follows directly from (\ref{nu})--(\ref{mu}) and (\ref{xadd}%
)--(\ref{xmult}). Obviously, a mapping defined in this way governs the
polyadic distributivity, and therefore $\Phi_{q}^{\left[  a,b\right]  }$ is a
ring homomorphism, or, more exactly, a 1-place heteromorphism for $m$-ary
addition together with $n$-ary multiplication (see \cite{dup2017}). Because
$x_{k}^{\prime\prime}\rightarrow x_{k}$ is one-to-one for any fixed $0\leq
k\leq q-1$, $\Phi_{q}^{\left[  a,b\right]  }$ is an isomorphism.
\end{proof}

In \textsc{Table \ref{T1}} we present the \textquotedblleft
abstract\textquotedblright\ non-extended polyadic\textit{ finite }fields
$\mathbb{F}_{\left(  m,n\right)  }^{\left[  a,b\right]  }\left(  q\right)  $
of lowest arity shape $\left(  m,n\right)  $ and orders $q$. The forbidden
pairs $\left(  a,b\right)  $ for $\mathbb{F}_{\left(  m,n\right)  }^{\left[
a,b\right]  }\left(  q\right)  $ coincide with ones for polyadic rings
$\mathbb{Z}_{\left(  m,n\right)  }^{\left[  a,b\right]  }$ listed in (\ref{b}).

\begin{table}[pb]
\caption[Content and arities of finite polyadic rings and fields]{{Content and
arities of the secondary class finite polyadic rings }$\mathbb{Z}_{\left(
m,n\right)  }^{\left[  a,b\right]  }\left(  q\right)  ${ and the corresponding
simple finite polyadic $\left(  m,n\right)  $-fields $\mathbb{F}%
_{m,n}^{\left[  a,b\right]  }\left(  q\right)  $ of order $2\leq q\leq4$
(framed) for $2\leq b\leq6$. The subscripts $e$ and $z$ mark those secondary
classes (two primes are omitted) which play the roles of polyadic unit and
polyadic zero respectively. The double frames denote the finite polyadic
fields containing both a unit and zero. The last line in a cell (corresponding
to a fixed congruence class $\left[  \left[  a\right]  \right]  _{b}$) gives
the allowed orders of finite polyadic fields for $5\leq q\leq10$, and bold
numbers mark the orders of such fields which contain both unit(s) and zero.}}%
\label{T1}
\begin{center}%
\begin{tabular}
[c]{||c||c|c|c|c|c|}\hline\hline
$a\setminus b$ & 2 & 3 & 4 & 5 & 6\\\hline\hline
1 & $%
\begin{array}
[c]{c}%
m=\mathbf{3}\\
n=\mathbf{2}\\
\fbox{\emph{1}$_{e}$\emph{,3}}\\
\fbox{\fbox{\emph{1}$_{e}$\emph{,3}$_{z}$\emph{,5}}}\\
\fbox{\emph{1}$_{e}$\emph{,3,5,7}}\\
\text{\textsf{q=\textbf{5},\textbf{7},8}}%
\end{array}
$ & $%
\begin{array}
[c]{c}%
m=\mathbf{4}\\
n=\mathbf{2}\\
\fbox{\fbox{\emph{1}$_{e}$\emph{,4}$_{z}$}}\\
\fbox{\emph{1}$_{e}$\emph{,4,7}}\\
\text{\emph{1}}_{e}\text{\emph{,4}}_{z}\text{\emph{,7,10}}\\
\text{\textsf{q=\textbf{5},\textbf{7},9}}%
\end{array}
$ & $%
\begin{array}
[c]{c}%
m=\mathbf{5}\\
n=\mathbf{2}\\
\fbox{\emph{1}$_{e}$\emph{,5}}\\
\fbox{\fbox{\emph{1}$_{e}$\emph{,5,9}$_{z}$}}\\
\fbox{\emph{1}$_{e}$\emph{,5,9,13}}\\
\text{\textsf{q=\textbf{5},\textbf{7},8}}%
\end{array}
$ & $%
\begin{array}
[c]{c}%
m=\mathbf{6}\\
n=\mathbf{2}\\
\fbox{\fbox{\emph{1}$_{e}$\emph{,6}$_{z}$}}\\
\fbox{\fbox{\emph{1}$_{e}$\emph{,6}$_{z}$\emph{,11}}}\\
\emph{1}_{e}\emph{,6,11,16}_{z}\\
\text{\textsf{q=5,\textbf{7}}}%
\end{array}
$ & $%
\begin{array}
[c]{c}%
m=\mathbf{7}\\
n=\mathbf{2}\\
\fbox{\emph{1}$_{e}$\emph{,7}}\\
\fbox{\emph{1}$_{e}$\emph{,7,13}}\\
\fbox{\emph{1}$_{e}$\emph{,7,13,19}}\\
\text{\textsf{q=\textbf{5},6,\textbf{7},8,9}}%
\end{array}
$\\\hline
2 &  & $%
\begin{array}
[c]{c}%
m=\mathbf{4}\\
n=\mathbf{3}\\
\fbox{\fbox{\emph{2}$_{z}$\emph{,5}$_{e}$}}\\
\fbox{\emph{2,5,8}$_{e}$}\\
\text{\emph{2,5}}_{e}\text{\emph{,8}}_{z}\text{\emph{,11}}_{e}\\
\text{\textsf{q=\textbf{5},\textbf{7},9}}%
\end{array}
$ &  & $%
\begin{array}
[c]{c}%
m=\mathbf{6}\\
n=\mathbf{5}\\
\fbox{\fbox{\emph{2}$_{z}$\emph{,7}$_{e}$}}\\
\fbox{\fbox{\emph{2}$_{e}$\emph{,7,12}$_{z}$}}\\
\text{\emph{2,7}}_{e}\text{\emph{,12}}_{z}\text{\emph{,17}}_{e}\\
\text{\textsf{q=5,\textbf{7}}}%
\end{array}
$ & $%
\begin{array}
[c]{c}%
m=\mathbf{4}\\
n=\mathbf{3}\\
\text{\emph{2,8}}_{z}\\
\fbox{\emph{2,8}$_{e}$\emph{,14}}\\
\text{\emph{2,8}}_{z}\text{\emph{,14,20}}\\
\text{\textsf{q=\textbf{5},\textbf{7},9}}%
\end{array}
$\\\hline
3 &  &  & $%
\begin{array}
[c]{c}%
m=\mathbf{5}\\
n=\mathbf{3}\\
\fbox{\emph{3}$_{e}$\emph{,7}$_{e}$}\\
\fbox{\fbox{\emph{3}$_{z}$\emph{,7}$_{e}$,\emph{11}$_{e}$}}\\
\fbox{\emph{3,7}$_{e}$\emph{,11,15}$_{e}$}\\
\text{\textsf{q=\textbf{5},6,\textbf{7},8}}%
\end{array}
$ & $%
\begin{array}
[c]{c}%
m=\mathbf{6}\\
n=\mathbf{5}\\
\fbox{\fbox{\emph{3}$_{e}$\emph{,8}$_{z}$}}\\
\fbox{\fbox{\emph{3}$_{z}$\emph{,8}$_{e}$\emph{,13}$_{e}$}}\\
\text{\emph{3}}_{e}\text{\emph{,8}}_{z}\text{\emph{,13}}_{e}\text{\emph{,18}%
}\\
\text{\textsf{q=5,\textbf{7}}}%
\end{array}
$ & $%
\begin{array}
[c]{c}%
m=\mathbf{3}\\
n=\mathbf{2}\\
\fbox{\emph{3,9}$_{e}$}\\
\text{\emph{3,9}}_{z}\text{\emph{,15}}\\
\fbox{\emph{3,9}$_{e}$\emph{,15,21}}\\
\text{\textsf{q=\textbf{5},\textbf{7},8}}%
\end{array}
$\\\hline
4 &  &  &  & $%
\begin{array}
[c]{c}%
m=\mathbf{6}\\
n=\mathbf{3}\\
\fbox{\fbox{\emph{4}$_{z}$\emph{,9}$_{e}$}}\\
\fbox{\fbox{\emph{4}$_{e}$\emph{,9}$_{z}$\emph{,14}$_{e}$}}\\
\text{\emph{4}}_{z}\text{\emph{,9}}_{e}\text{\emph{,14,19}}_{e}\\
\text{\textsf{q=5,\textbf{7}}}%
\end{array}
$ & $%
\begin{array}
[c]{c}%
m=\mathbf{4}\\
n=\mathbf{2}\\
\text{\emph{4}}_{z}\text{\emph{,10}}\\
\fbox{\emph{4,10}$_{e}$\emph{,16}}\\
\text{\emph{4,10,16}}_{z}\text{\emph{,22}}\\
\text{\textsf{q=\textbf{5,7},9}}%
\end{array}
$\\\hline
5 &  &  &  &  & $%
\begin{array}
[c]{c}%
m=\mathbf{7}\\
n=\mathbf{3}\\
\fbox{\emph{5}$_{e}$\emph{,11}$_{e}$}\\
\fbox{\emph{5,11,17}$_{e}$}\\
\fbox{\emph{5}$_{e}$\emph{,11}$_{e}$\emph{,17}$_{e}$\emph{,23}$_{e}$}\\
\text{\textsf{q=\textbf{5},6,\textbf{7},8,9}}%
\end{array}
$\\\hline\hline
\end{tabular}
\end{center}
\end{table}

\subsection{Multiplicative structure}

In the multiplicative structure the following crucial differences between the
binary finite fields and $\mathbb{F}_{\left(  m,n\right)  }\left(  q\right)  $
can be outlined.

\begin{remark}
The order of a \textsl{non-extended} finite polyadic field may not be
\textsl{prime} (e.g., $\mathbb{F}_{\left(  3,2\right)  }^{\left[  1,2\right]
}\left(  4\right)  $, $\mathbb{F}_{\left(  5,3\right)  }^{\left[  3,4\right]
}\left(  8\right)  $, $\mathbb{F}_{\left(  4,3\right)  }^{\left[  2,6\right]
}\left(  9\right)  $), and may not even be a power of a prime binary number
(e.g. $\mathbb{F}_{\left(  7,3\right)  }^{\left[  5,6\right]  }\left(
6\right)  $, $\mathbb{F}_{\left(  11,5\right)  }^{\left[  3,10\right]
}\left(  10\right)  $), see \textsc{Table \ref{T2}}.
\end{remark}

\begin{remark}
The polyadic characteristic $\chi_{p}$ of a non-extended finite polyadic field
can have values such that $\chi_{p}+1$ (corresponding in the binary case to
the ordinary characteristic $\chi$) can be \textsl{nonprime} (\textsc{Table
\ref{T0}}).
\end{remark}

\begin{assertion}
If a secondary class $\left[  \left[  x_{k}^{\left[  a,b\right]  }\right]
\right]  _{bq}$ contains \textsl{no zero}, it can be isomorphic to the
\textquotedblleft abstract\textquotedblright\ \textit{zeroless} finite
polyadic field $\mathbb{F}_{\left(  m,n\right)  }^{\left[  a,b\right]
}\left(  q\right)  $.
\end{assertion}

Zeroless fields are marked by one frame in \textsc{Table \ref{T1}}. There
exist finite polyadic fields with \textsl{more than one} unit, and also
\textsl{all} elements can be units. Such cases are marked in \textsc{Table
\ref{T2}} by subscripts which indicate the number of units.

Denote the Abelian finite multiplicative $n$-ary group of $\mathbb{F}_{\left(
m,n\right)  }^{\left[  a,b\right]  }\left(  q\right)  $ by $\mathcal{G}%
_{n}^{\left[  a,b\right]  }\left(  q\right)  $.

\begin{example}
\textbf{1)} The finite $\left(  6,3\right)  $-field $\mathbb{F}_{\left(
6,3\right)  }^{\left[  4,5\right]  }\left(  3\right)  $ of order $3$ has
\textsl{two units} $\left\{  4,14\right\}  \equiv\left\{  4_{e},14_{e}%
\right\}  $ and zero $9\equiv9_{z}$, and its multiplicative $3$-ary group
$\mathcal{G}_{3}^{\left[  4,5\right]  }\left(  2\right)  $ is%
\[
\mu_{3}\left[  4,4,4\right]  =4,\ \mu_{3}\left[  4,4,14\right]  =14,\ \mu
_{3}\left[  4,14,14\right]  =4,\ \mu_{3}\left[  14,14,14\right]  =14.
\]

\textbf{2)} In $\mathbb{F}_{\left(  7,3\right)  }^{\left[  5,6\right]
}\left(  4\right)  $ of order $4$ \textsl{all} the elements $\left\{
5,11,17,23\right\}  \equiv\left\{  5_{e},11_{e},17_{e},23_{e}\right\}  $ are
units (see (\ref{e})), because for its $3$-ary group $\mathcal{G}_{3}^{\left[
5,6\right]  }\left(  4\right)  $ we have%
\begin{align*}
\mu_{3}\left[  5,5,5\right]   &  =5,\mu_{3}\left[  11,11,11\right]
=11,\mu_{3}\left[  17,17,17\right]  =17,\mu_{3}\left[  23,23,23\right]  =23,\\
\mu_{3}\left[  5,5,11\right]   &  =11,\ \mu_{3}\left[  5,5,17\right]
=17,\ \mu_{3}\left[  5,5,23\right]  =23,\\
\mu_{3}\left[  11,11,5\right]   &  =5,\ \mu_{3}\left[  11,11,17\right]
=17,\ \mu_{3}\left[  11,11,23\right]  =23,\\
\mu_{3}\left[  17,17,5\right]   &  =5,\ \mu_{3}\left[  17,17,11\right]
=11,\ \mu_{3}\left[  17,17,23\right]  =23,\\
\mu_{3}\left[  23,23,5\right]   &  =5,\ \mu_{3}\left[  23,23,11\right]
=11,\ \mu_{3}\left[  23,23,17\right]  =17.
\end{align*}

\end{example}

In general, $n$-ary groups may contain no units (and multiplicative
idempotents) at all, and invertibility is controlled in another way, by
querelements: each element of any $n$-ary group should be (uniquely)
\textquotedblleft quereable\textquotedblright\ (\ref{nm}). In case of $\left(
m,n\right)  $-fields both $m$-ary additive group and $n$-ary multiplicative
group $\mathcal{G}_{n}^{\left[  a,b\right]  }\left(  q\right)  $ can be of
this kind. By analogy with zeroless-nonunital rings we have

\begin{definition}
A polyadic field $\mathbb{F}_{\left(  m,n\right)  }$ is called
\textit{zeroless-nonunital}, if it contains no zero and no unit.
\end{definition}

\begin{assertion}
The zeroless-nonunital polyadic fields are totally (additively and
multiplicatively) nonderived.
\end{assertion}

\begin{proposition}
\textbf{1)} If $\gcd\left(  bq\right)  \neq1$, then $\mathbb{F}_{\left(
m,n\right)  }^{\left[  a,b\right]  }\left(  q\right)  $ is zeroless.
\textbf{2)} The zero can exist only, if $\gcd\left(  bq\right)  =1$ and the
field order $q=p$ is prime.
\end{proposition}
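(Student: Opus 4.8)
The plan is to replace the polyadic bookkeeping by ordinary modular arithmetic and then locate the zero as an absorbing element. Identify $\mathbb{F}_{(m,n)}^{[a,b]}(q)$ with the subset $S=\{\,a+bk \bmod bq : 0\le k\le q-1\,\}\subset\mathbb{Z}/bq\mathbb{Z}$, the operations $\nu_m,\mu_n$ being those induced from $\mathbb{Z}/bq\mathbb{Z}$ (closure is precisely (\ref{maa})--(\ref{ana})). A zero is then the unique multiplicatively absorbing $z=a+bk_z$, i.e. $z\,y_1\cdots y_{n-1}\equiv z\pmod{bq}$ for all $y_i\in S$. The only computation I need is a necessary condition on $z$: writing $x_0=a$, $x_1=a+b$ and subtracting the two absorption identities for the tuples $(z,x_0,\dots,x_0)$ and $(z,x_0,\dots,x_0,x_1)$ gives $a^{n-2}z\,(x_1-x_0)=a^{n-2}z\,b\equiv0\pmod{bq}$, hence $q\mid a^{n-2}(a+bk_z)$.

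For statement 1, suppose $\mathbb{F}_{(m,n)}^{[a,b]}(q)$ is a field with $\gcd(b,q)=d>1$ and fix a prime $\pi\mid d$, so $\pi\mid b$ and $\pi\mid q$; I would split on whether $\pi\mid a$. If $\pi\nmid a$, then every element of $S$ is $\equiv a\not\equiv0\pmod\pi$, so $a^{n-2}(a+bk_z)$ is prime to $\pi$ whereas $\pi\mid q$; this contradicts $q\mid a^{n-2}(a+bk_z)$, so no zero exists. If $\pi\mid a$, then (since also $\pi\mid b$) every element of $S$ is divisible by $\pi$, and I argue by escape of powers. Put $v=v_\pi(bq)\ge2$. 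The idempotence $z^n\equiv z\pmod{bq}$ (the all-$z$ case of absorption) forces $v_\pi(z)\ge v$, because $z^{n-1}-1$ is prime to $\pi$; thus $\pi^{v}\mid z$. A direct count shows at most $q/\pi^{v_\pi(q)}<q$ of the representatives $x_k$ are divisible by $\pi^{v}$, so some $y\in S\setminus\{z\}$ is not. Reduction modulo $\pi^{v}$ (a ring map, as $\pi^{v}\mid bq$) carries the multiplicative $n$-ary group $S\setminus\{z\}$ onto a finite $n$-ary group $G$; but the image $\bar y$ is nonzero and satisfies $\bar y^{\langle\ell\rangle_{\times n}}\to0$ in $\mathbb{Z}/\pi^{v}\mathbb{Z}$, so $G$ would contain the absorbing element $0$ (forcing $G=\{0\}$, against $\bar y\neq0$) or else fail to be closed. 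Either way we contradict the group structure, so again no zero. This proves statement 1.

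For statement 2, a field with a zero has $\gcd(b,q)=1$ by statement 1, and the Chinese Remainder Theorem finishes the job. Under $\mathbb{Z}/bq\mathbb{Z}\cong\mathbb{Z}/b\mathbb{Z}\times\mathbb{Z}/q\mathbb{Z}$ the set $S$ maps bijectively onto $\{a\}\times\mathbb{Z}/q\mathbb{Z}$ ($\gcd(b,q)=1$ makes $k\mapsto a+bk$ sweep all residues mod $q$), and by (\ref{ana}) the first coordinate is frozen at $a$, so $\mu_n$ is just the ordinary $n$-fold product in the $\mathbb{Z}/q\mathbb{Z}$-coordinate; the zero is the unique absorbing residue, namely $0$. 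The field hypothesis makes the nonzero residues closed under this product, and if $q=q_1q_2$ were composite with $1<q_i<q$ the product $q_1\cdot q_2\cdot1\cdots1\equiv0$ of nonzero elements would break closure. Hence $q=p$ is prime.

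I expect the subcase $\pi\mid a$ of statement 1 to be the crux: establishing $v_\pi(z)\ge v_\pi(bq)$ and turning the ``powers escape to $0$'' phenomenon into a clean contradiction with the $n$-ary group axioms (through the reduction modulo $\pi^{v}$) is where the work lies, whereas the subcase $\pi\nmid a$ and the whole of statement 2 are formal once the model inside $\mathbb{Z}/bq\mathbb{Z}$ is in place.
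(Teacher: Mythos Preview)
Your argument is correct, and it is considerably more detailed than what the paper offers. The paper's proof is a single sentence (``It follows directly from the definition of the polyadic zero and (\ref{kadd})''), pointing to the \emph{additive} description of the zero and the formula for $k_{add}$; no further justification is given. You instead characterise the zero \emph{multiplicatively} as an absorbing element inside $\mathbb{Z}/bq\mathbb{Z}$, extract the divisibility constraint $q\mid a^{n-2}(a+bk_z)$, and then split on whether the prime $\pi\mid\gcd(b,q)$ divides $a$. The $\pi\nmid a$ branch is immediate; in the $\pi\mid a$ branch your valuation argument together with the observation that the image of an $n$-ary group under a ring reduction is again an $n$-ary group (hence cannot contain the absorbing element $0$ nontrivially) is sound --- the periodicity of $y^{\langle\ell\rangle_{\times n}}$ in the finite $n$-ary group $S\setminus\{z\}$ versus its collapse to $0$ modulo $\pi^{v}$ gives the contradiction cleanly. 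For statement~2 your CRT identification $S\cong\{a\}\times\mathbb{Z}/q\mathbb{Z}$ reduces the question to the standard fact that $(\mathbb{Z}/q\mathbb{Z})\setminus\{0\}$ is multiplicatively closed iff $q$ is prime.

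What each approach buys: the paper's appeal to (\ref{kadd}) is terse and leaves the reader to reconstruct the argument; your route is longer but entirely self-contained, makes the role of the field hypothesis explicit (it is only used in the $\pi\mid a$ subcase and in the primality step), and the CRT picture in part~2 transparently explains \emph{why} primality of $q$ is forced. One cosmetic point: your count ``at most $q/\pi^{v_\pi(q)}<q$'' is in fact exactly $q/\pi^{v_\pi(q)}$ when $v_\pi(a)\ge v_\pi(b)$ and $0$ otherwise, but either way the conclusion (not all $x_k$ are divisible by $\pi^{v}$) stands.
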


\begin{proof}
It follows directly from the definition of the polyadic zero (\ref{z}) and
(\ref{kadd}).
\end{proof}

Let us consider examples of zeroless-nonunital finite fields of polyadic
integer numbers $\mathbb{F}_{\left(  m,n\right)  }^{\left[  a,b\right]
}\left(  q\right)  $.

\begin{example}
\label{exam-fld}\textbf{1)} The zeroless-nonunital polyadic finite fields
having lowest $\left\vert a+b\right\vert $ are, e.g., $\mathbb{F}_{\left(
9,3\right)  }^{\left[  3,8\right]  }\left(  2\right)  $, $\mathbb{F}_{\left(
9,3\right)  }^{\left[  3,8\right]  }\left(  4\right)  $, $\mathbb{F}_{\left(
9,3\right)  }^{\left[  5,8\right]  }\left(  4\right)  $, $\mathbb{F}_{\left(
9,3\right)  }^{\left[  5,8\right]  }\left(  8\right)  $, also $\mathbb{F}%
_{\left(  10,4\right)  }^{\left[  4,9\right]  }\left(  3\right)  $,
$\mathbb{F}_{\left(  10,4\right)  }^{\left[  4,9\right]  }\left(  9\right)  $,
and $\mathbb{F}_{\left(  10,4\right)  }^{\left[  7,9\right]  }\left(
3\right)  $, $\mathbb{F}_{\left(  10,4\right)  }^{\left[  7,9\right]  }\left(
9\right)  $.

\textbf{2)} The multiplication of the zeroless-nonunital $\left(  9,3\right)
$-field $\mathbb{F}_{\left(  9,3\right)  }^{\left[  5,8\right]  }\left(
2\right)  $ is%
\[
\mu_{3}\left[  5,5,5\right]  =13,\ \mu_{3}\left[  5,5,13\right]  =5,\ \mu
_{3}\left[  5,13,13\right]  =13,\ \mu_{3}\left[  13,13,13\right]  =5.
\]

Using (\ref{xx}) we find the (unique) multiplicative querelements $\bar{5}%
=13$, $\overline{13}=5$. The addition of $\mathbb{F}_{\left(  9,3\right)
}^{\left[  5,8\right]  }\left(  2\right)  $ is%
\begin{align*}
\nu_{9}\left[  5^{9}\right]   &  =13,\ \nu_{9}\left[  5^{8},13\right]
=5,\ \nu_{9}\left[  5^{7},13^{2}\right]  =13,\ \nu_{9}\left[  5^{6}%
,13^{3}\right]  =5,\ \nu_{9}\left[  5^{5},13^{4}\right]  =13,\\
\ \nu_{9}\left[  5^{4},13^{5}\right]   &  =5,\ \nu_{9}\left[  5^{3}%
,13^{6}\right]  =13,\ \nu_{9}\left[  5^{2},13^{7}\right]  =5,\ \nu_{9}\left[
5,13^{8}\right]  =13,\nu_{9}\left[  13^{9}\right]  =5.
\end{align*}

The additive (unique) querelements are $\tilde{5}=13$, $\widetilde{13}=5$. So
all elements are additively and multiplicatively querable (polyadically
invertible), and therefore $\nu_{9}$ is $9$-ary additive group operation and
$\mu_{3}$ is $3$-ary multiplicative group operation, as it should be for a
field. Because it contains no unit and no zero, $\mathbb{F}_{\left(
9,3\right)  }^{\left[  5,8\right]  }\left(  2\right)  $ is actually a
zeroless-nonunital finite $\left(  9,3\right)  $-field of order $2$.
\end{example}

Other zeroless-nonunital finite polyadic fields are marked by frames in
\textsc{Table \ref{T2}}.

\begin{remark}
The absence of zero \textsl{does not} guarantee that a $\left(  m,n\right)
$-ring $\mathcal{R}_{\left(  m,n\right)  }^{\left[  a,b\right]  }\left(
q\right)  $ is a field. For that, both $\left\langle \left[  \left[  a\right]
\right]  _{b}\mid\nu_{m}\right\rangle $ and $\left\langle \left[  \left[
a\right]  \right]  _{b}\mid\mu_{n}\right\rangle $ have to be polyadic groups.
\end{remark}

\begin{example}
The $\left(  4,3\right)  $-ring $\mathcal{R}_{\left(  4,3\right)  }^{\left[
2,3\right]  }\left(  6\right)  $ is zeroless, and $\left\langle \left[
\left[  3\right]  \right]  _{4}\mid\nu_{4}\right\rangle $ is its $4$-ary
additive group (each element has a unique additive querelement). Despite each
element of $\left\langle \left[  \left[  2\right]  \right]  _{3}\mid\mu
_{3}\right\rangle $ having a querelement, it is not a multiplicative $3$-ary
group, because for the two elements $2$ and $14$ we have \textsl{nonunique}
querelements
\begin{equation}
\mu_{3}\left[  2,2,5\right]  =2,\ \mu_{3}\left[  2,2,14\right]  =2,\ \mu
_{3}\left[  14,14,2\right]  =14,\ \mu_{3}\left[  14,14,11\right]  =14.
\end{equation}

\end{example}

The conditions on the congruence classes $\left[  \left[  a\right]  \right]
_{b}$ and the invariants $I,J$ (\ref{ij}), which give the same arity structure
are given in \cite{dup2017}. Note, that there exist polyadic fields of the
same arities $\left(  m,n\right)  $ and the same order $q$ which are not
isomorphic (in contrast with what is possible in the binary case).

\begin{example}
The polyadic $\left(  9,3\right)  $-fields corresponding to the congruence
classes $\left[  \left[  5\right]  \right]  _{8}$ and $\left[  \left[
7\right]  \right]  _{8}$ are not isomorphic for orders $q=2,4,8$ (see
\textsc{Table \ref{T2}}). Despite both being zeroless, the first
$\mathbb{F}_{\left(  9,3\right)  }^{\left[  5,8\right]  }\left(  q\right)  $
are nonunital, while the second $\mathbb{F}_{\left(  9,3\right)  }^{\left[
7,8\right]  }\left(  q\right)  $ has two units, which makes an isomorphism impossible.
\end{example}

Recall \cite{lid/nie}, that in a (non-extended, prime) finite binary field
$\mathbb{F}\left(  p\right)  $, the \textit{order of an element}
$x\in\mathbb{F}\left(  p\right)  $ is defined as a smallest integer $\lambda$
such that $x^{\lambda}=1$. Obviously, the set of fixed order elements forms a
cyclic subgroup $\mathcal{G}_{\lambda}$ of the multiplicative binary group of
$\mathbb{F}\left(  p\right)  $, and $\lambda\mid\left(  p-1\right)  $. If
$\lambda=p-1$, such an element is called a \textit{primitive (root)}, it
generates all elements, and these exist in any finite binary field. Moreover,
any element of $\mathbb{F}\left(  p\right)  $ is idempotent $x^{p}=x$, while
all its nonzero elements satisfy $x^{p-1}=1$ (Fermat's little theorem). A
non-extended (prime) finite field is fully determined by its order $p$ (up to
isomorphism), and, moreover, any $\mathbb{F}\left(  p\right)  $ is isomorphic
to $\mathbb{Z}\diagup p\mathbb{Z}$.

In the polyadic case, the situation is more complicated. Because the related
secondary class structure (\ref{fmn}) contains parameters in addition to the
number of elements $q$, the order of (non-extended) polyadic fields may not be
prime, or nor even a power of a prime integer (e.g. $\mathbb{F}_{\left(
7,3\right)  }^{\left[  5,6\right]  }\left(  6\right)  $ or $\mathbb{F}%
_{\left(  11,5\right)  }^{\left[  3,10\right]  }\left(  10\right)  $). Also,
as was shown above, finite polyadic fields can be zeroless, nonunital and have
many (or even all) units (see \textsc{Table \ref{T2}}). Therefore, we cannot
use units in the definition of the element order. Instead, we propose an alternative:

\begin{definition}
If an element of the finite polyadic field $x\in\mathbb{F}_{\left(
m,n\right)  }\left(  q\right)  $ satisfies%
\begin{equation}
x^{\left\langle \lambda_{p}\right\rangle _{\times n}}=x,
\end{equation}
then the smallest such $\lambda_{p}$ is called the \textit{idempotence
polyadic order} and denoted $\operatorname*{ord}x=\lambda_{p}$.
\end{definition}

Obviously, $\lambda_{p}=\lambda$ (see (\ref{xp})).

\begin{definition}
The \textit{idempotence polyadic order} $\lambda_{p}^{\left[  a,b\right]  }$
of a finite polyadic field $\mathbb{F}_{\left(  m,n\right)  }^{\left[
a,b\right]  }\left(  q\right)  $ is the maximum $\lambda_{p}$ of all its
elements, and we call such field $\lambda_{p}^{\left[  a,b\right]  }%
$-\textit{idempotent }and denote $\operatorname*{ord}\mathbb{F}_{\left(
m,n\right)  }^{\left[  a,b\right]  }\left(  q\right)  =\lambda_{p}^{\left[
a,b\right]  }$.
\end{definition}

In \textsc{Table \ref{T2}} we present the idempotence polyadic order
$\lambda_{p}^{\left[  a,b\right]  }$ for the (allowed) finite polyadic fields
$\mathbb{F}_{\left(  m,n\right)  }^{\left[  a,b\right]  }\left(  q\right)  $
(\ref{fmn}) with $2\leq b\leq10$ and order $q\leq10$.

\begin{definition}
Denote by $q_{\ast}$ the number of nonzero \textsl{distinct} elements in
$\mathbb{F}_{\left(  m,n\right)  }\left(  q\right)  $%
\begin{equation}
q_{\ast}=\left\{
\begin{array}
[c]{c}%
q-1,\ \ \ \text{if }\exists z\in\mathbb{F}_{\left(  m,n\right)  }\left(
q\right) \\
q,\ \ \ \text{if }\nexists z\in\mathbb{F}_{\left(  m,n\right)  }\left(
q\right)  ,
\end{array}
\right.  \label{qs}%
\end{equation}
which is called a \textit{reduced (field) order}.
\end{definition}

The second choice of (\ref{qs}) in the binary case is absent, because any
commutative binary group (as the additive group of a field) contains a zero
(the identity of this group), and therefore any binary field has a zero, which
does not always hold for the $m$-ary additive group of $\mathbb{F}_{\left(
m,n\right)  }$ (see \emph{Example \ref{exam-fld}}).

\begin{table}[pbh]
\caption[Idempotence polyadic orders for the (allowed) finite polyadic
fields]{Idempotence polyadic orders $\lambda_{p}^{\left[  a,b\right]  }$ for
(allowed) finite polyadic fields $\mathbb{F}_{\left(  m,n\right)  }^{\left[
a,b\right]  }\left(  q\right)  $, with $2\leq b\leq10$ and field orders $2\leq
q\leq10$. If $q_{\ast}$ is $n$-admissible, they are underlined. The
zeroless-nonunital cases are framed. Subscripts correspond to the number of
units $\kappa_{e}\geq2$, while nonframed entries have exactly one unit.}%
\label{T2}
\begin{center}%
\begin{tabular}
[c]{||c|c||c||c|c|c|c|c|c|c|c|c||}\hline\hline
\multicolumn{2}{||c||}{$\left[  \left[  a\right]  \right]  _{b}$} &
\emph{Arities} & \multicolumn{9}{|c||}{\emph{Finite polyadic field order }$q$%
}\\\hline
$b$ & $a$ & $(m,n)$ & \textsl{2} & \textsl{3} & \textsl{4} & \textsl{5} &
\textsl{6} & \textsl{7} & \textsl{8} & \textsl{9} & \textsl{10}\\\hline\hline
\textsf{2} & \textsf{1} & $\left(  3,2\right)  $ & $2$ & $\mathbf{2}$ & $2$ &
$\mathbf{4}$ & $\varnothing$ & $\mathbf{6}$ & $4$ & $\varnothing$ &
$\varnothing$\\\hline
\textsf{3} & \textsf{1} & $\left(  4,2\right)  $ & $\mathbf{1}$ & $3$ &
$\varnothing$ & $\mathbf{4}$ & $\varnothing$ & $\mathbf{6}$ & $\varnothing$ &
$9$ & $\varnothing$\\\hline
& \textsf{2} & $\left(  4,3\right)  $ & $\mathbf{1}$ & \underline{$3$} &
$\varnothing$ & $\mathbf{2}_{2e}$ & $\varnothing$ & $\mathbf{3}_{2e}$ &
$\varnothing$ & \underline{$9$} & $\varnothing$\\\hline
\textsf{4} & \textsf{1} & $\left(  5,2\right)  $ & $2$ & $\mathbf{2}$ & $4$ &
$\mathbf{4}$ & $\varnothing$ & $\mathbf{6}$ & $8$ & $\varnothing$ &
$\varnothing$\\\hline
& \textsf{3} & $\left(  5,3\right)  $ & $1_{2e}$ & $\mathbf{1}_{2e}$ &
$2_{2e}$ & $\mathbf{2}_{2e}$ & $\varnothing$ & $\mathbf{3}_{2e}$ & $4_{2e}$ &
$\varnothing$ & $\varnothing$\\\hline
\textsf{5} & \textsf{1} & $\left(  6,2\right)  $ & $\mathbf{1}$ & $\mathbf{2}$
& $\varnothing$ & $5$ & $\varnothing$ & $\mathbf{6}$ & $\varnothing$ &
$\varnothing$ & $\varnothing$\\\hline
& \textsf{2} & $\left(  6,5\right)  $ & $\mathbf{1}$ & $\mathbf{1}_{2e}$ &
$\varnothing$ & \underline{$5$} & $\varnothing$ & $\mathbf{3}_{2e}$ &
$\varnothing$ & $\varnothing$ & $\varnothing$\\\hline
& \textsf{3} & $\left(  6,5\right)  $ & $\mathbf{1}$ & $\mathbf{1}_{2e}$ &
$\varnothing$ & \underline{$5$} & $\varnothing$ & $\mathbf{3}_{2e}$ &
$\varnothing$ & $\varnothing$ & $\varnothing$\\\hline
& \textsf{4} & $\left(  6,3\right)  $ & $\mathbf{1}$ & $\mathbf{1}_{2e}$ &
$\varnothing$ & \underline{$5$} & $\varnothing$ & $\mathbf{3}_{2e}$ &
$\varnothing$ & $\varnothing$ & $\varnothing$\\\hline
\textsf{6} & \textsf{1} & $\left(  7,2\right)  $ & $2$ & $3$ & $2$ &
$\mathbf{4}$ & $6$ & $\mathbf{6}$ & $4$ & $9$ & $\varnothing$\\\hline
& \textsf{2} & $\left(  4,3\right)  $ & $\varnothing$ & \underline{$3$} &
$\varnothing$ & $\mathbf{2}_{2e}$ & $\varnothing$ & $\mathbf{3}_{2e}$ &
$\varnothing$ & \underline{$9$} & $\varnothing$\\\hline
& \textsf{3} & $\left(  3,2\right)  $ & $2$ & $\varnothing$ & $2$ &
$\mathbf{4}$ & $\varnothing$ & $\mathbf{6}$ & $4$ & $\varnothing$ &
$\varnothing$\\\hline
& \textsf{4} & $\left(  4,2\right)  $ & $\varnothing$ & $3$ & $\varnothing$ &
$\mathbf{4}$ & $\varnothing$ & $\mathbf{6}$ & $\varnothing$ & $9$ &
$\varnothing$\\\hline
& \textsf{5} & $\left(  7,3\right)  $ & $1_{2e}$ & \underline{$3$} & $1_{4e}$
& $\mathbf{2}_{2e}$ & $3_{2e}$ & $\mathbf{3}_{2e}$ & $2$ & \underline{$9$} &
$\varnothing$\\\hline
\textsf{7} & \textsf{1} & $\left(  8,2\right)  $ & $\mathbf{1}$ & $\mathbf{2}$
& $\varnothing$ & $\mathbf{4}$ & $\varnothing$ & $7$ & $\varnothing$ &
$\varnothing$ & $\varnothing$\\\hline
& \textsf{2} & $\left(  8,4\right)  $ & $\mathbf{1}$ & $\mathbf{2}$ &
$\varnothing$ & \underline{$\mathbf{4}$} & $\varnothing$ & \underline{$7$} &
$\varnothing$ & $\varnothing$ & $\varnothing$\\\hline
& \textsf{3} & $\left(  8,7\right)  $ & $\mathbf{1}$ & $\mathbf{1}_{2e}$ &
$\varnothing$ & $\mathbf{2}_{2e}$ & $\varnothing$ & \underline{$7$} &
$\varnothing$ & $\varnothing$ & $\varnothing$\\\hline
& \textsf{4} & $\left(  8,4\right)  $ & $\mathbf{1}$ & $\mathbf{2}$ &
$\varnothing$ & \underline{$\mathbf{4}$} & $\varnothing$ & \underline{$7$} &
$\varnothing$ & $\varnothing$ & $\varnothing$\\\hline
& \textsf{5} & $\left(  8,7\right)  $ & $\mathbf{1}$ & $\mathbf{1}_{2e}$ &
$\varnothing$ & $\mathbf{2}_{2e}$ & $\varnothing$ & \underline{$7$} &
$\varnothing$ & $\varnothing$ & $\varnothing$\\\hline
& \textsf{6} & $\left(  8,3\right)  $ & $\mathbf{1}$ & $\mathbf{1}_{2e}$ &
$\varnothing$ & $\mathbf{2}_{2e}$ & $\varnothing$ & \underline{$7$} &
$\varnothing$ & $\varnothing$ & $\varnothing$\\\hline
\textsf{8} & \textsf{1} & $\left(  9,2\right)  $ & $2$ & $\mathbf{2}$ & $4$ &
$\mathbf{4}$ & $\varnothing$ & $\mathbf{6}$ & $8$ & $\varnothing$ &
$\varnothing$\\\hline
& \textsf{3} & $\left(  9,3\right)  $ & \fbox{$2$} & $\mathbf{1}_{2e}$ &
\fbox{$4$} & $\mathbf{2}_{2e}$ & $\varnothing$ & $\mathbf{3}_{2e}$ &
\fbox{$8$} & $\varnothing$ & $\varnothing$\\\hline
& \textsf{5} & $\left(  9,3\right)  $ & \fbox{$2$} & $\mathbf{1}_{2e}$ &
\fbox{$4$} & $\mathbf{2}_{2e}$ & $\varnothing$ & $\mathbf{3}_{2e}$ &
\fbox{$8$} & $\varnothing$ & $\varnothing$\\\hline
& \textsf{7} & $\left(  9,3\right)  $ & $1_{2e}$ & $\mathbf{1}_{2e}$ &
$2_{2e}$ & $\mathbf{2}_{2e}$ & $\varnothing$ & $\mathbf{3}_{2e}$ & $4_{2e}$ &
$\varnothing$ & $\varnothing$\\\hline
\textsf{9} & \textsf{1} & $\left(  10,2\right)  $ & $\mathbf{1}$ & $3$ &
$\varnothing$ & $\mathbf{4}$ & $\varnothing$ & $\mathbf{6}$ & $\varnothing$ &
$9$ & $\varnothing$\\\hline
& \textsf{2} & $\left(  10,7\right)  $ & $\mathbf{1}$ & \fbox{$3$} &
$\varnothing$ & $\mathbf{2}_{2e}$ & $\varnothing$ & $\mathbf{1}_{6e}$ &
$\varnothing$ & \fbox{$9$} & $\varnothing$\\\hline
& \textsf{4} & $\left(  10,4\right)  $ & $\mathbf{1}$ & \fbox{$3$} &
$\varnothing$ & \underline{$\mathbf{4}$} & $\varnothing$ & $\mathbf{2}_{3e}$ &
$\varnothing$ & \fbox{$9$} & $\varnothing$\\\hline
& \textsf{5} & $\left(  10,7\right)  $ & $\mathbf{1}$ & \fbox{$3$} &
$\varnothing$ & $\mathbf{2}_{2e}$ & $\varnothing$ & $\mathbf{1}_{6e}$ &
$\varnothing$ & \fbox{$9$} & $\varnothing$\\\hline
& \textsf{7} & $\left(  10,4\right)  $ & $\mathbf{1}$ & \fbox{$3$} &
$\varnothing$ & \underline{$\mathbf{4}$} & $\varnothing$ & $\mathbf{2}_{3e}$ &
$\varnothing$ & \fbox{$9$} & $\varnothing$\\\hline
& \textsf{8} & $\left(  10,3\right)  $ & $\mathbf{1}$ & $3$ & $\varnothing$ &
$\mathbf{2}_{2e}$ & $\varnothing$ & $\mathbf{3}_{2e}$ & $\varnothing$ & $9$ &
$\varnothing$\\\hline
\textsf{10} & \textsf{1} & $\left(  11,2\right)  $ & $2$ & $\mathbf{2}$ & $2$
& $5$ & $\varnothing$ & $\mathbf{6}$ & $8$ & $\varnothing$ & $10$\\\hline
& \textsf{2} & $\left(  6,5\right)  $ & $\varnothing$ & $\mathbf{1}_{2e}$ &
$\varnothing$ & \underline{$5$} & $\varnothing$ & $\mathbf{3}_{2e}$ &
$\varnothing$ & $\varnothing$ & $\varnothing$\\\hline
& \textsf{3} & $\left(  11,5\right)  $ & $1_{2e}$ & $\mathbf{1}_{2e}$ &
$1_{4e}$ & \underline{$5$} & $\varnothing$ & $\mathbf{3}_{2e}$ & $1_{8e}$ &
$\varnothing$ & $5_{2e}$\\\hline
& \textsf{4} & $\left(  6,3\right)  $ & $\varnothing$ & $\mathbf{1}_{2e}$ &
$\varnothing$ & \underline{$5$} & $\varnothing$ & $\mathbf{3}_{2e}$ &
$\varnothing$ & $\varnothing$ & $\varnothing$\\\hline
& \textsf{5} & $\left(  3,2\right)  $ & $2$ & $\mathbf{2}$ & $2$ &
$\varnothing$ & $\varnothing$ & $\mathbf{6}$ & $\varnothing$ & $\varnothing$ &
$\varnothing$\\\hline
& \textsf{6} & $\left(  6,2\right)  $ & $\varnothing$ & $\mathbf{2}$ &
$\varnothing$ & $5$ & $\varnothing$ & $\mathbf{6}$ & $\varnothing$ &
$\varnothing$ & $\varnothing$\\\hline
& \textsf{7} & $\left(  11,5\right)  $ & $1_{2e}$ & $\mathbf{1}_{2e}$ &
$1_{4e}$ & \underline{$5$} & $\varnothing$ & $\mathbf{3}_{2e}$ & $1_{8e}$ &
$\varnothing$ & $5_{2e}$\\\hline
& \textsf{8} & $\left(  6,5\right)  $ & $\varnothing$ & $\mathbf{1}_{2e}$ &
$\varnothing$ & \underline{$5$} & $\varnothing$ & $\mathbf{3}_{2e}$ &
$\varnothing$ & $\varnothing$ & $\varnothing$\\\hline
& \textsf{9} & $\left(  11,3\right)  $ & $1_{2e}$ & $\mathbf{1}_{2e}$ &
$1_{4e}$ & \underline{$5$} & $\varnothing$ & $\mathbf{3}_{2e}$ & $2_{4e}$ &
$\varnothing$ & $5_{2e}$\\\hline\hline
\end{tabular}
\end{center}
\end{table}

\begin{theorem}
If a finite polyadic field $\mathbb{F}_{\left(  m,n\right)  }\left(  q\right)
$ has an order $q$, such that $q_{\ast}=q_{\ast}^{adm}=\ell\left(  n-1\right)
+1$ is $n$-admissible, then (for $n\geq3$ and one unit):

\begin{enumerate}
\item A sequence of the length $q_{\ast}\left(  n-1\right)  $ built from any
fixed element $y\in\mathbb{F}_{\left(  m,n\right)  }\left(  q\right)  $ is
neutral%
\begin{equation}
\mu_{n}^{\left(  q_{\ast}\right)  }\left[  x,y^{q_{\ast}\left(  n-1\right)
}\right]  =x,\ \ \ \forall x\in\mathbb{F}_{\left(  m,n\right)  }\left(
q\right)  . \label{mx}%
\end{equation}

\item Any element $y$ satisfies the polyadic idempotency condition%
\begin{equation}
y^{\left\langle q_{\ast}\right\rangle _{\times n}}=y,\ \ \ \forall
y\in\mathbb{F}_{\left(  m,n\right)  }\left(  q\right)  . \label{yy}%
\end{equation}

\end{enumerate}
\end{theorem}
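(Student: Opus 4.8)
The plan is to forget the ambient ring almost entirely and work inside the multiplicative $n$-ary group $G$ of the field: by hypothesis there is a unit, so $\left\langle R^{\ast}\mid\mu_{n}\right\rangle$ (or $\left\langle R\mid\mu_{n}\right\rangle$ when the field is zeroless) is a commutative cancellative $n$-ary group with exactly $q_{\ast}$ elements. I would first observe that part 2 is the special case $x=y$ of part 1: setting $x=y$ in (\ref{mx}) turns the left-hand side into $\mu_{n}^{\left(q_{\ast}\right)}\left[y^{q_{\ast}\left(n-1\right)+1}\right]=y^{\left\langle q_{\ast}\right\rangle_{\times n}}$, which is exactly (\ref{yy}). (If a zero $z$ exists it is multiplicatively absorbing by polyadic distributivity, cf. (\ref{z}), so both identities are trivial whenever $x=z$ or $y=z$, and only $G$ matters.) Hence the entire content is the claim that the polyad $y^{q_{\ast}\left(n-1\right)}$ of length $q_{\ast}\left(n-1\right)$ is neutral in $G$, i.e. that the translation $T_{y}\colon x\mapsto\mu_{n}\left[x,y^{n-1}\right]$ satisfies $T_{y}^{\,q_{\ast}}=\mathrm{id}$.

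For the core step I would run the polyadic analogue of the ``multiply the whole group by $y$'' proof of Lagrange's theorem. Since $G$ is an $n$-ary group, each translation $T_{y}$ is a bijection of the finite set $G$. Because $q_{\ast}=\ell\left(n-1\right)+1$ is $n$-admissible, I may form the single product $\Pi=\mu_{n}^{\left(\ell\right)}\left[\,g_{1},\ldots,g_{q_{\ast}}\,\right]$ of all elements of $G$. Applying the bijection $T_{y}$ to every factor permutes $G$, so $\mu_{n}^{\left(\ell\right)}\left[T_{y}(g_{1}),\ldots,T_{y}(g_{q_{\ast}})\right]=\Pi$ as well; expanding each $T_{y}(g_{i})=\mu_{n}\left[g_{i},y^{n-1}\right]$ and using associativity and commutativity to gather the $y$'s, the same element equals $\mu_{n}^{\left(q_{\ast}\right)}\left[\Pi,y^{q_{\ast}\left(n-1\right)}\right]$. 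Thus $T_{y}^{\,q_{\ast}}$ fixes $\Pi$. All intermediate word-lengths are $n$-admissible: the total is $q_{\ast}n\equiv q_{\ast}\equiv1\ \left(\operatorname{mod}n-1\right)$, which is where the admissibility hypothesis is genuinely used.

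It then remains to upgrade ``$T_{y}^{\,q_{\ast}}$ fixes one element'' to ``$T_{y}^{\,q_{\ast}}=\mathrm{id}$''. For this I would use the standard fact that in a commutative $n$-ary group a polyad $\mathbf{t}$ whose length is a multiple of $n-1$ is neutral as soon as $\mu_{n}\left[c,\mathbf{t}\right]=c$ for a single $c$: given any $x$, solvability supplies a polyad $\mathbf{s}$ with $x=\mu_{n}\left[c,\mathbf{s}\right]$, and then $\mu_{n}\left[x,\mathbf{t}\right]=\mu_{n}\left[c,\mathbf{s},\mathbf{t}\right]=\mu_{n}\left[c,\mathbf{t},\mathbf{s}\right]=\mu_{n}\left[c,\mathbf{s}\right]=x$. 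Applied to $c=\Pi$ and $\mathbf{t}=y^{q_{\ast}\left(n-1\right)}$ this yields (\ref{mx}), and (\ref{yy}) follows as noted. An equivalent and perhaps cleaner route, which I would mention, is to binarise: since a unit $e$ exists, $x\cdot y:=\mu_{n}\left[x,e^{n-2},y\right]$ makes $G$ a finite abelian binary group of order $q_{\ast}$ with identity $e$, the $n$-ary product being the derived one, so ordinary Lagrange gives $y^{q_{\ast}}=e$ and hence $y^{q_{\ast}\left(n-1\right)}=\left(y^{q_{\ast}}\right)^{n-1}=e$ at once.

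I expect the main obstacle to be precisely the bookkeeping in the middle paragraph: one must check that every regrouping of the long word into $\Pi$ together with the $q_{\ast}\left(n-1\right)$ copies of $y$ stays $n$-admissible, and that the ``pointwise-to-global'' neutrality lemma is invoked with all its hypotheses (it needs commutativity, which we have). The assumptions $n\geq3$ and ``one unit'' serve only to place us squarely in this regime — a commutative $n$-ary group with a distinguished neutral element — and the $n$-admissibility of $q_{\ast}$ is exactly what licenses forming the full product $\Pi$; I would remark that the conclusions can in fact persist outside this regime, so the stated hypotheses are sufficient rather than necessary.
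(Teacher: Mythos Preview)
Your argument is correct and follows essentially the same route as the paper: form the product $\Pi$ of all $q_{\ast}$ nonzero elements (which requires $q_{\ast}$ to be $n$-admissible), apply the bijection $x\mapsto\mu_{n}[x,y^{n-1}]$ to every factor, and regroup to obtain $\mu_{n}^{(q_{\ast})}[\Pi,y^{q_{\ast}(n-1)}]=\Pi$; part~2 then falls out by taking $x=y$. Your write-up is in fact more careful than the paper's on one point: the paper's displayed chain establishes neutrality of $y^{q_{\ast}(n-1)}$ only at the single element $x_{0}=\Pi$ and then passes tacitly to the universal statement, whereas you spell out the ``one fixed point implies identity'' lemma for commutative $n$-ary groups (or, equivalently, the binarisation via the unit) that closes this gap.
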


\begin{proof}
\textbf{1)} Take a long $n$-ary product of the $q_{\ast}$ distinct nonzero
elements $x_{0}=\mu_{n}^{\left(  \ell\right)  }\left[  x_{1},x_{2}%
,\ldots,x_{q_{_{\ast}}}\right]  $, such that $q_{_{\ast}}$ can take only
multiplicatively $n$-admissible\ values $q_{\ast}^{adm}$, where $\ell
\in\mathbb{N}$ is a \textquotedblleft number\textquotedblright\ of $n$-ary
multiplications. Then polyadically multiply each $x_{i}$ by a fixed element
$y\in\mathbb{F}_{\left(  m,n\right)  }\left(  q\right)  $ such that all
$q_{_{\ast}}$ elements $\mu_{n}\left[  x_{i},y^{n-1}\right]  $ will be
distinct as well. Therefore, their product should be the same $x_{0}$. Using
commutativity and associativity, we obtain%
\begin{align}
x_{0}  &  =\mu_{n}^{\left(  \ell\right)  }\left[  x_{1},x_{2},\ldots
,x_{q_{_{\ast}}}\right]  =\mu_{n}^{\left(  \ell\right)  }\left[  \mu
_{n}\left[  x_{1},y^{n-1}\right]  ,\mu_{n}\left[  x_{2},y^{n-1}\right]
,\ldots,\mu_{n}\left[  x_{q_{\ast}},y^{n-1}\right]  \right] \nonumber\\
&  =\mu_{n}^{\left(  q_{\ast}\right)  }\left[  \mu_{n}^{\left(  \ell\right)
}\left[  x_{1},x_{2},\ldots,x_{q_{_{\ast}}}\right]  ,y^{q_{\ast}\left(
n-1\right)  }\right]  =\mu_{n}^{\left(  q_{\ast}\right)  }\left[
x_{0},y^{q_{\ast}\left(  n-1\right)  }\right]  .
\end{align}

\textbf{2)} Insert into the formula obtained above $x_{0}=y$, then use
(\ref{xx}) to get (\ref{yy}).
\end{proof}

Finite polyadic fields $\mathbb{F}_{\left(  m,n\right)  }^{\left[  a,b\right]
}\left(  q\right)  $ having $n$-admissible reduced order $q_{\ast}=q_{\ast
}^{adm}=\ell\left(  n-1\right)  +1$ ($\ell\in\mathbb{N}$) (underlined in
\textsc{Table \ref{T2}}) are closest to the binary finite fields
$\mathbb{F}\left(  p\right)  $ in their general properties: they are
half-derived, while if they contain a zero, they are fully derived. If
$q_{\ast}\neq q_{\ast}^{adm}$ , then $\mathbb{F}_{\left(  m,n\right)
}^{\left[  a,b\right]  }\left(  q\right)  $ can be nonunital or contain more
than one unit (subscripts in \textsc{Table \ref{T2}}).

\begin{assertion}
The finite fields $\mathbb{F}_{\left(  m,n\right)  }^{\left[  a,b\right]
}\left(  q\right)  $ of $n$-admissible reduced order $q_{\ast}=q_{\ast}^{adm}$
cannot have more than one unit and cannot be zeroless-nonunital.
\end{assertion}

\begin{assertion}
If $q_{\ast}\neq q_{\ast}^{adm}$, and $\mathbb{F}_{\left(  m,n\right)
}^{\left[  a,b\right]  }\left(  q\right)  $ is unital zeroless, then the
reduced order $q_{\ast}$ is the product of the idempotence polyadic field
order $\lambda_{p}^{\left[  a,b\right]  }=\operatorname*{ord}\mathbb{F}%
_{\left(  m,n\right)  }^{\left[  a,b\right]  }\left(  q\right)  $ and the
number of units $\kappa_{e}$ (if $a\nmid b$ and $n\geq3$)%
\begin{equation}
q_{\ast}=\lambda_{p}^{\left[  a,b\right]  }\kappa_{e}.
\end{equation}

\end{assertion}

Let us consider the structure of the multiplicative group $\mathcal{G}%
_{n}^{\left[  a,b\right]  }\left(  q_{\ast}\right)  $ of $\mathbb{F}_{\left(
m,n\right)  }^{\left[  a,b\right]  }\left(  q\right)  $ in more detail. Some
properties of commutative cyclic $n$-ary groups were considered for particular
relations between orders and arity. Here we have: 1) more parameters and
different relations between these, the arity and order; 2) the $\left(
m,n\right)  $-field under consideration, which leads to additional
restrictions. In such a way exotic polyadic groups and fields arise which have
unusual properties that have not been studied before.

\begin{definition}
An element $x_{prim}\in\mathcal{G}_{n}^{\left[  a,b\right]  }\left(  q_{\ast
}\right)  $ is called $n$-\textit{ary primitive}, if its idempotence order is%
\begin{equation}
\lambda_{p}=\operatorname*{ord}x_{prim}=q_{\ast}. \label{lq}%
\end{equation}

\end{definition}

Then, all $\lambda_{p}$ polyadic powers $x_{prim}^{\left\langle 1\right\rangle
_{\times n}},x_{prim}^{\left\langle 2\right\rangle _{\times n}},\ldots
,x_{prim}^{\left\langle q_{\ast}\right\rangle _{\times n}}\equiv x_{prim}$
generate other distinct elements, and so $\mathcal{G}_{n}^{\left[  a,b\right]
}\left(  q_{\ast}\right)  $ is a finite cyclic $n$-ary group generated by
$x_{prim}$, i.e. $\mathcal{G}_{n}^{\left[  a,b\right]  }\left(  q_{\ast
}\right)  =\left\langle \left\{  x_{prim}^{\left\langle i\right\rangle
_{\times n}}\right\}  \mid\mu_{n}\right\rangle $. We denote a number primitive
elements in $\mathbb{F}_{\left(  m,n\right)  }^{\left[  a,b\right]  }\left(
q\right)  $ by $\kappa_{prim}$.

\begin{assertion}
For zeroless $\mathbb{F}_{\left(  m,n\right)  }^{\left[  a,b\right]  }\left(
q\right)  $ and prime order $q=p$, we have $\lambda_{p}^{\left[  a,b\right]
}=q$, and $\mathcal{G}_{n}^{\left[  a,b\right]  }\left(  q\right)  $ is
indecomposable ($n\geq3$).
\end{assertion}

\begin{example}
The smallest $3$-admissible zeroless polyadic field is $\mathbb{F}_{\left(
4,3\right)  }^{\left[  2,3\right]  }\left(  3\right)  $ with the unit $e=8$
and two $3$-ary primitive elements $2,5$ having $3$-idempotence order
$\operatorname*{ord}2=\operatorname*{ord}5=3$, so $\kappa_{prim}=2$ , because%
\begin{equation}
2^{\left\langle 1\right\rangle _{\times3}}=8,\ \ 2^{\left\langle
2\right\rangle _{\times3}}=5,\ \ 2^{\left\langle 3\right\rangle _{\times3}%
}=2,\ \ \ \ \ 5^{\left\langle 1\right\rangle _{\times3}}=8,\ \ 5^{\left\langle
2\right\rangle _{\times3}}=2,\ \ 5^{\left\langle 3\right\rangle _{\times3}}=5,
\end{equation}
and therefore $\mathcal{G}_{3}^{\left[  2,3\right]  }\left(  3\right)  $ is a
cyclic indecomposable $3$-ary group.
\end{example}

\begin{assertion}
If $\mathbb{F}_{\left(  m,n\right)  }^{\left[  a,b\right]  }\left(  q\right)
$ is zeroless-nonunital, then every element is $n$-ary primitive,
$\kappa_{prim}=q$, also $\lambda_{p}^{\left[  a,b\right]  }=q$ (the order $q$
can be not prime), and $\mathcal{G}_{n}^{\left[  a,b\right]  }\left(
q\right)  $ is a indecomposable commutative cyclic $n$-ary group without
identity ($n\geq3$).
\end{assertion}

\begin{example}
The $\left(  10,7\right)  $-field $\mathbb{F}_{\left(  10,7\right)  }^{\left[
5,9\right]  }\left(  9\right)  $ is zeroless-nonunital, each element (has
$\lambda_{p}=9$) is primitive and generates the whole field, and therefore
$\kappa_{prim}=9$, thus the $7$-ary multiplicative group $\mathcal{G}%
_{7}^{\left[  5,9\right]  }\left(  9\right)  $ is indecomposable and without identity.
\end{example}

The structure of $\mathcal{G}_{n}^{\left[  a,b\right]  }\left(  q_{\ast
}\right)  $ can be extremely nontrivial and may have no analogs in the binary case.

\begin{assertion}
\label{as-gg}If there exists more than one unit, then:

\begin{enumerate}
\item If $\mathcal{G}_{n}^{\left[  a,b\right]  }\left(  q_{\ast}\right)  $ can
be decomposed on its $n$-ary subroups, the number of units $\kappa_{e}$
coincides with the number of its \textsl{cyclic} $n$-ary subgroups
$\mathcal{G}_{n}^{\left[  a,b\right]  }\left(  q_{\ast}\right)  =\mathcal{G}%
_{1}\cup\mathcal{G}_{2}\ldots\cup\mathcal{G}_{k_{e}}$ which \textsl{do not}
intersect $\mathcal{G}_{i}\cap\mathcal{G}_{j}=\varnothing$, $i,j=i=1,\ldots
,\kappa_{e}$, $i\neq j$.

\item If a zero exists, then each $\mathcal{G}_{i}$ has its own unit $e_{i}$,
$i=1,\ldots,\kappa_{e}$.

\item In the zeroless case $\mathcal{G}_{n}^{\left[  a,b\right]  }\left(
q\right)  =\mathcal{G}_{1}\cup\mathcal{G}_{2}\ldots\cup\mathcal{G}_{k_{e}}\cup
E\left(  \mathcal{G}\right)  $, where $E\left(  \mathcal{G}\right)  =\left\{
e_{i}\right\}  $ is the split-off subgroup of units.
\end{enumerate}
\end{assertion}

\begin{example}
\textbf{1)} In the $\left(  9,3\right)  $-field $\mathbb{F}_{\left(
9,3\right)  }^{\left[  5,8\right]  }\left(  7\right)  $ there is a single zero
$z=21\equiv21_{z}$ and two units $e_{1}=13\equiv13_{e}$, $e_{2}=29\equiv
29_{e}$, and so its multiplicative $3$-ary group $\mathcal{G}_{3}^{\left[
5,8\right]  }\left(  6\right)  =\left\{  5,13,29,37,45,53\right\}  $ consists
of two \textsl{nonintersecting} (which is not possible in the binary case)
$3$-ary cyclic subgroups $\mathcal{G}_{1}=\left\{  5,13_{e},45\right\}  $ and
$\mathcal{G}_{2}=\left\{  29_{e},37,53\right\}  $ (for both $\lambda_{p}=3$)%
\begin{align}
\mathcal{G}_{1}  &  =\left\{  5^{\left\langle 1\right\rangle _{\times3}%
}=13_{e},5^{\left\langle 2\right\rangle _{\times3}}=45,5^{\left\langle
3\right\rangle _{\times3}}=5\right\}  ,\ \ \bar{5}=45,\overline{45}%
=5,\nonumber\\
\mathcal{G}_{2}  &  =\left\{  37^{\left\langle 1\right\rangle _{\times3}%
}=29_{e},37^{\left\langle 2\right\rangle _{\times3}}=53,37^{\left\langle
3\right\rangle _{\times3}}=37\right\}  ,\ \ \overline{37}=53,\overline
{53}=37.\nonumber
\end{align}
All nonunital elements in $\mathcal{G}_{3}^{\left[  5,8\right]  }\left(
6\right)  $ are (polyadic) $1$-reflections, because $5^{\left\langle
1\right\rangle _{\times3}}=45^{\left\langle 1\right\rangle _{\times3}}=13_{e}$
and $37^{\left\langle 1\right\rangle _{\times3}}=53^{\left\langle
1\right\rangle _{\times3}}=29_{e}$, and so the subgroup of units $E\left(
\mathcal{G}\right)  =\left\{  13_{e},29_{e}\right\}  $ is unsplit $E\left(
\mathcal{G}\right)  \cap\mathcal{G}_{1,2}\neq\varnothing$.

\textbf{2)} For the zeroless $\mathbb{F}_{\left(  9,3\right)  }^{\left[
7,8\right]  }\left(  8\right)  $, its multiplicative $3$-group $\mathcal{G}%
_{3}^{\left[  5,8\right]  }\left(  6\right)  =\left\{
7,15,23,31,39,47,55,63\right\}  $ has two units $e_{1}=31\equiv31_{e}$,
$e_{2}=63\equiv63_{e}$, and it splits into two nonintersecting nonunital
cyclic $3$-subgroups ($\lambda_{p}=4$ and $\lambda_{p}=2$) and the subgroup of
units%
\begin{align*}
&  \mathcal{G}_{1} =\left\{  7^{\left\langle 1\right\rangle _{\times3}%
}=23,7^{\left\langle 2\right\rangle _{\times3}}=39,7^{\left\langle
3\right\rangle _{\times3}}=55,7^{\left\langle 4\right\rangle _{\times3}%
}=4\right\}  ,\ \bar{7}=55,\overline{55}=7,\overline{23}=39,\overline
{39}=23,\\
& \mathcal{G}_{2} =\left\{  15^{\left\langle 1\right\rangle _{\times3}%
}=47,15^{\left\langle 2\right\rangle _{\times3}}=15\right\}  ,\ \ \ \overline
{15}=47,\overline{47}=15,\\
&  E\left(  \mathcal{G}\right)  =\left\{  31_{e},63_{e}\right\}  .
\end{align*}
There are no $\ell_{\mu}$-reflections, and so $E\left(  \mathcal{G}\right)  $
splits out $E\left(  \mathcal{G}\right)  \cap\mathcal{G}_{1,2}=\varnothing$.
\end{example}

If all elements are units $E\left(  \mathcal{G}\right)  =\mathcal{G}%
_{n}^{\left[  a,b\right]  }\left(  q\right)  $, then, obviously, this group is
$1$-idempotent, and $\lambda_{p}=1$.

\begin{assertion}
If $\mathbb{F}_{\left(  m,n\right)  }^{\left[  a,b\right]  }\left(  q\right)
$ is zeroless-nonunital, then there no $n$-ary cyclic subgroups in
$\mathcal{G}_{n}^{\left[  a,b\right]  }\left(  q\right)  $.
\end{assertion}

The subfield structure of $\mathbb{F}_{\left(  m,n\right)  }^{\left[
a,b\right]  }\left(  q\right)  $ can coincide with the corresponding subgroup
structure of the multiplicative $n$-ary group $\mathcal{G}_{n}^{\left[
a,b\right]  }\left(  q_{\ast}\right)  $, only if its additive $m$-ary group
has the same subgroup structure. However, additive $m$-ary groups of all
polyadic fields $\mathbb{F}_{\left(  m,n\right)  }^{\left[  a,b\right]
}\left(  q\right)  $ have the same structure: they are cyclic and have no
proper $m$-ary subgroups, each element generates all other elements, i.e. it
is a primitive root. Therefore, we arrive at

\begin{theorem}
The polyadic field $\mathbb{F}_{\left(  m,n\right)  }^{\left[  a,b\right]
}\left(  q\right)  $, being isomorphic to the $\left(  m,n\right)  $-field of
polyadic integer numbers $\mathbb{Z}_{\left(  m,n\right)  }^{\left[
a,b\right]  }\left(  q\right)  $, has no any proper subfield.
\end{theorem}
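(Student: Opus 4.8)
The plan is to transfer the whole question to the additive $m$-ary group via the isomorphism $\Phi_q^{[a,b]}$, so that it suffices to prove that $\mathbb{F}_{(m,n)}^{[a,b]}(q)$ itself has no proper subfield. Any subfield $\mathbb{F}^{\prime}$ is closed under both $\nu_m$ and $\mu_n$, hence its underlying set is an $m$-ary subgroup of the additive $m$-ary group of $\mathbb{F}_{(m,n)}^{[a,b]}(q)$; and because a field has a nonempty multiplicative $n$-ary group, $\mathbb{F}^{\prime}$ contains at least one nonzero element. The statement therefore reduces to showing that every additive $m$-ary subgroup containing a nonzero element is already the full group, i.e. that every nonzero element is an additive primitive root.

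First I would make the additive $m$-ary powers explicit. Indexing elements by $x_k$ with $0\le k\le q-1$, formula (\ref{kadd}) gives, by an immediate induction on $\ell$,
\begin{equation}
x_k^{\left\langle \ell\right\rangle _{+m}}=x_{k+\ell d_k},\qquad d_k\equiv\left(m-1\right)k+I_m^{[a,b]}\ \left(\operatorname{mod}q\right),
\end{equation}
so the additive powers of $x_k$ sweep out the coset $k+\langle d_k\rangle$ of $\mathbb{Z}\diagup q\mathbb{Z}$. Every $m$-ary subgroup of this shifted abelian $m$-ary group is a coset of a binary subgroup of $\mathbb{Z}\diagup q\mathbb{Z}$ (its difference set being such a subgroup, by the Hossz\'u--Gluskin reduction), whence $x_k$ generates all of $\mathbb{F}_{(m,n)}^{[a,b]}(q)$ exactly when $\gcd(d_k,q)=1$. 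Thus the primitive-root property, and with it the theorem, is equivalent to the coprimality $\gcd\bigl((m-1)k+I_m^{[a,b]},\,q\bigr)=1$ for every nonzero $x_k$.

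The main obstacle is to establish this coprimality uniformly. The key tool is the identity $b\,d_k=(m-1)x_k$, immediate from $I_m^{[a,b]}=(m-1)a/b$ and $x_k=a+bk$, which ties the additive step $d_k$ to the representative itself. In the limiting case $a=1$ one has $m-1=b$ and hence $d_k=x_k$, so $\gcd(d_k,q)=1$ is literally the condition that $x_k$ be multiplicatively invertible modulo $bq$, i.e. a nonzero field element, and the claim is automatic. For general $(m,n)$ I would argue through the multiplicative $n$-ary group together with the preceding Proposition on the zero: a nonzero element with $d_k\equiv0$ would be an additive $1$-idempotent distinct from any zero, and I would exclude this by showing, via (\ref{kmult}) and the requirement that every nonzero element be multiplicatively querable, that $d_k\equiv0\ (\operatorname{mod}q)$ can hold only for the zero $z$, which in turn exists only for prime $q$. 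Granting this, the only proper additive $m$-ary subgroup is the singleton $\{z\}$; since $\mathbb{F}^{\prime}$ contains a nonzero element, its additive subgroup cannot be $\{z\}$ and must coincide with the whole group, forcing $\mathbb{F}^{\prime}=\mathbb{F}_{(m,n)}^{[a,b]}(q)$. I expect this coprimality — equivalently, the absence of nonzero additive idempotents — to absorb most of the work when $n\ge3$, where (\ref{kmult}) is genuinely $n$-ary and the clean identity $d_k=x_k$ of the limiting case is no longer available.
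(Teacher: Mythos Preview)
Your strategy is exactly the paper's: reduce the question to the additive $m$-ary group and argue that it has no proper $m$-ary subgroups because every (nonzero) element is an additive primitive root. The paper's entire argument is the single sentence preceding the theorem, which simply asserts that ``additive $m$-ary groups of all polyadic fields $\mathbb{F}_{(m,n)}^{[a,b]}(q)$ \ldots\ have no proper $m$-ary subgroups, each element generates all other elements''---no justification is offered. Your explicit computation $x_k^{\langle\ell\rangle_{+m}}=x_{k+\ell d_k}$ with $d_k\equiv(m-1)k+I_m^{[a,b]}$, the coprimality criterion $\gcd(d_k,q)=1$, the identity $b\,d_k=(m-1)x_k$, and the complete treatment of the limiting case $a=1$ all go well beyond what the paper supplies; the gap you honestly flag for general $(a,b)$ is precisely the claim the paper asserts without proof, so you have not missed anything the paper actually does.
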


In this sense, $\mathbb{F}_{\left(  m,n\right)  }^{\left[  a,b\right]
}\left(  q\right)  $ can be named a\textit{ prime} \textit{polyadic field}.

\section{\textsc{Conclusions}}

Recall that any binary finite field has an order which is a power of a prime
number $q=p^{r}$ (its characteristic), and all such fields are isomorphic and
contain a prime subfield $GF\left(  p\right)  $ of order $p$ which is
isomorphic to the congruence (residue) class field $\mathbb{Z}\diagup
p\mathbb{Z}$ \cite{lid/nie}.

\begin{conjecture}
A finite $\left(  m,n\right)  $-field (with $m>n$) should contain a minimal
subfield which is isomorphic to one of the prime polyadic fields constructed
above, and therefore $\mathbb{F}_{\left(  m,n\right)  }^{\left[  a,b\right]
}\left(  q\right)  $ can be interpreted as a polyadic analog of $GF\left(
p\right)  $.
\end{conjecture}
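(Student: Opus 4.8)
The plan is to imitate the classical argument that every finite field of characteristic $p$ contains $GF\left(  p\right)  \cong\mathbb{Z}\diagup p\mathbb{Z}$ as its prime subfield, adapted to the polyadic setting. Given an arbitrary finite $\left(  m,n\right)  $-field $\mathcal{F}_{m,n}$ with $m>n$, the goal is to produce an injective $1$-place heteromorphism $\Phi\colon\mathbb{F}_{\left(  m,n\right)  }^{\left[  a,b\right]  }\left(  q\right)  \to\mathcal{F}_{m,n}$ for a suitable admissible triple $\left(  a,b,q\right)  $, and then invoke the Theorem immediately preceding this conjecture (that every $\mathbb{F}_{\left(  m,n\right)  }^{\left[  a,b\right]  }\left(  q\right)  $ is prime, i.e. has no proper subfield) to conclude that the image of $\Phi$ is exactly the sought minimal subfield. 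First I would apply the Hossz\'u--Gluskin decomposition to the two commutative polyadic groups attached to $\mathcal{F}_{m,n}$: the additive $m$-ary group $\langle F\mid\nu_m\rangle$ and the multiplicative $n$-ary group $\langle F^{\ast}\mid\mu_n\rangle$ (with $F^{\ast}=F\setminus\{z\}$ when a zero exists, and $F^{\ast}=F$ otherwise). This yields underlying binary abelian groups $\left(  F,+\right)  $ and $\left(  F^{\ast},\cdot\right)  $ together with two distinguished shift elements $d_{+}$ and $d_{\times}$ that encode the nonderivedness of $\nu_m$ and $\mu_n$.

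The central step is to recover an intrinsic congruence-class pair $\left(  a,b\right)  $ governing $\mathcal{F}_{m,n}$. Coupling the two Hossz\'u--Gluskin shifts through the polyadic distributive law, I would show that $d_{+}$ and $d_{\times}$ are forced to coincide (modulo the order of $\mathcal{F}_{m,n}$) with the shape invariants $I=\left(  m-1\right)  a/b$ and $J=\left(  a^{n}-a\right)  /b$ of (\ref{ij}), subject to the arithmetic constraints $ma\equiv a$ and $a^{n}\equiv a\,(\operatorname{mod}b)$ of (\ref{maa})--(\ref{ana}). By Assertion \ref{asser-abmn} the pair $\left(  a,b\right)  $ then determines the arities, and by Assertion \ref{asser-mn} it is automatically compatible with $m>n$; the minimal admissible order $q$ is read off from the additive idempotence structure of $\mathcal{F}_{m,n}$ via (\ref{kadd}).

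Having fixed $\left(  a,b,q\right)  $, I would choose a canonical generator. The binary retract $\left(  F,+\right)  $ from the first step supplies an element $y_{0}$ whose additive polyadic powers exhaust the additive structure, and which I take to play the role of the representative $x_{k=0}=a$; this mirrors the fact, recalled just before the preceding Theorem, that the additive $m$-ary group of every $\mathbb{F}_{\left(  m,n\right)  }^{\left[  a,b\right]  }\left(  q\right)  $ is cyclic with each element a primitive root. I would then define $\Phi$ on the representatives $x_{k}=a+bk$ by sending them to the matching additive polyadic powers of $y_{0}$ inside $\mathcal{F}_{m,n}$. Checking against (\ref{xadd})--(\ref{xmult}) that $\Phi$ intertwines $\nu_m$ with $\nu_m$ and $\mu_n$ with $\mu_n$ (so that it is a $1$-place heteromorphism respecting polyadic distributivity), and that it is injective on a full set of $q$ representatives, shows that $\Phi$ is an isomorphism onto its image. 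That image is a sub-$\left(  m,n\right)  $-field isomorphic to $\mathbb{F}_{\left(  m,n\right)  }^{\left[  a,b\right]  }\left(  q\right)  $, and its minimality is immediate from the preceding Theorem.

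The hard part will be the extraction of $\left(  a,b\right)  $ in the second step --- the polyadic analog of reading off the characteristic. Classically the characteristic is simply the additive order of the unit $1$, but here there may be no unit at all (the zeroless-nonunital case) or several units, so no single canonical anchor is available, and $d_{+}$, $d_{\times}$ could a priori be inequivalent. One must prove that distributivity nonetheless forces $\left(  d_{+},d_{\times}\right)  $ to be \emph{simultaneously} realizable as $\left(  I,J\right)  $ for a genuine integer pair $\left(  a,b\right)  $ obeying (\ref{maa})--(\ref{ana}) with exactly the given arities $\left(  m,n\right)  $ --- equivalently, that no exotic finite $\left(  m,n\right)  $-field lives outside the congruence-class family $\mathbb{F}_{\left(  m,n\right)  }^{\left[  a,b\right]  }\left(  q\right)  $. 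Establishing this exhaustiveness is precisely the content that keeps the statement a conjecture; the plan above reduces it to a concrete Diophantine realizability question for the Hossz\'u--Gluskin shifts.
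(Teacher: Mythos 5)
First, a point of order: the statement you are proving is posed in the paper as a \emph{conjecture} --- the paper offers no proof at all, so there is no ``paper's argument'' to compare against, and any complete proof would be new mathematics. Your proposal, by your own closing admission, is not such a proof: the entire weight rests on the unproved claim that the Hossz\'u--Gluskin data of an arbitrary finite $\left(m,n\right)$-field can be realized as the shape invariants $\left(I,J\right)$ of (\ref{ij}) for a genuine integer pair $\left(a,b\right)$ satisfying (\ref{maa})--(\ref{ana}). That realizability claim \emph{is} the conjecture (or very nearly so), so the plan is a reformulation rather than a proof. It is a reasonable reformulation --- and the final step is sound: once an embedding of some $\mathbb{F}_{\left(m,n\right)}^{\left[a,b\right]}\left(q\right)$ exists, minimality of its image does follow from the paper's Theorem that these fields have no proper subfield --- but the reduction itself has two concrete defects beyond the admitted gap.

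First, the Hossz\'u--Gluskin decomposition of an $n$-ary group produces not only a distinguished element $d$ but also an automorphism $\varphi$ of the binary retract, and commutativity of $\mu_{n}$ does not force $\varphi=\operatorname{id}$; you work only with the shifts $d_{+},d_{\times}$ and silently discard $\varphi_{+},\varphi_{\times}$, whereas for a general $\left(m,n\right)$-field nothing constrains these automorphisms to the special (translation-type) form they take in the congruence-class fields, so your ``coupling through distributivity'' argument is set up with too little data. Second, your choice of the generator $y_{0}$ is circular: you select an element ``whose additive polyadic powers exhaust the additive structure,'' citing the fact that the additive $m$-ary group of every $\mathbb{F}_{\left(m,n\right)}^{\left[a,b\right]}\left(q\right)$ is cyclic. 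But that cyclicity is a property of the prime polyadic fields themselves, not of an arbitrary finite $\left(m,n\right)$-field --- in the binary analogy, the additive group of $GF\left(p^{r}\right)$ for $r\geq2$ is not cyclic, and one expects the same failure for polyadic extensions. So $y_{0}$ must be located \emph{inside} the minimal subfield, i.e.\ inside the very object whose existence you are trying to establish; classically this is resolved by anchoring at the unit $1$, and as you yourself note, in the zeroless-nonunital and multi-unit cases no canonical anchor is available (indeed the intersection of all subfields could a priori be empty, so ``minimal'' need not even be unique). Note also that the paper exhibits non-isomorphic fields of the same arity shape and order (e.g.\ $\mathbb{F}_{\left(9,3\right)}^{\left[5,8\right]}\left(q\right)$ versus $\mathbb{F}_{\left(9,3\right)}^{\left[7,8\right]}\left(q\right)$), so your extraction step must be fine enough to select among inequivalent candidates with identical $\left(m,n,q\right)$ --- further evidence that the Diophantine step you defer is where all the difficulty lives.
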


\mbox{} \bigskip

\subsection*{\textsc{Acknowledgements}}

The author would like to express his sincere thankfulness and gratitude to
Joachim Cuntz, Christopher Deninger, Mike Hewitt, Grigorij Kurinnoj, Daniel
Lenz, Jim Stasheff, Alexander Voronov, and Wend Werner for fruitful discussions.

\newpage

\mbox{}

\appendix
\setcounter{section}{1}
\renewcommand{\theequation}{{\color{Magenta}{\sf\Alph{section}.\arabic{equation}}}}
\setcounter{secnumdepth}{0} \bigskip\numberwithin{theorem}{section} \renewcommand{\thetheorem}{{\color{Brown}\bf\arabic{section}.\arabic{theorem}}}

\section{\textsc{Appendix.} Multiplicative properties of exotic finite
polyadic fields}

Here we list concrete examples of finite polyadic fields which have properties
that are not possible in the binary case (see \textsc{Table \ref{T2}}). Only
the multiplication of fields will be shown, because their additive part is
huge (many pages) for higher arities, and does not carry so much distinctive information.

\textbf{1)} The first exotic finite polyadic field which has a number of
elements which is not a prime number, or prime power (as it should be for a
finite binary field) is $\mathbb{F}_{\left(  7,3\right)  }^{\left[
5,6\right]  }\left(  6\right)  $, which consists of $6$ elements $\left\{
5,11,17,23,29,35\right\}  $, $q=6$, It is zeroless and contains two units
$\left\{  17,35\right\}  \equiv\left\{  17_{e},35_{e}\right\}  $, $\kappa
_{e}=2$, and each element has the idempotence polyadic order $\lambda_{p}=3$,
i.e. $\mu_{3}\left[  x^{7}\right]  =x,\ \forall x\in\mathbb{F}_{\left(
7,3\right)  }^{\left[  5,6\right]  }\left(  6\right)  $. The multiplication is%
\begin{align*}
\mu_{3}\left[  5,5,5\right]   &  =17,\mu_{3}\left[  5,5,11\right]  =23,\mu
_{3}\left[  5,5,17\right]  =29,\mu_{3}\left[  5,5,23\right]  =35,\mu
_{3}\left[  5,5,29\right]  =5,\\
\mu_{3}\left[  5,5,35\right]   &  =11,\mu_{3}\left[  5,11,11\right]
=29,\mu_{3}\left[  5,11,17\right]  =35,\mu_{3}\left[  5,11,23\right]
=5,\mu_{3}\left[  5,11,29\right]  =11,\\
\mu_{3}\left[  5,11,35\right]   &  =17,\mu_{3}\left[  5,17,17\right]
=5,\mu_{3}\left[  5,17,23\right]  =11,\mu_{3}\left[  5,17,29\right]
=17,\mu_{3}\left[  5,17,35\right]  =23,\\
\mu_{3}\left[  5,23,23\right]   &  =17,\mu_{3}\left[  5,23,29\right]
=23,\mu_{3}\left[  5,23,35\right]  =29,\mu_{3}\left[  5,29,29\right]
=29,\mu_{3}\left[  5,29,35\right]  =35,\\
\mu_{3}\left[  5,35,35\right]   &  =5,\mu_{3}\left[  11,11,11\right]
=35,\mu_{3}\left[  11,11,17\right]  =5,\mu_{3}\left[  11,11,23\right]
=11,\mu_{3}\left[  11,11,29\right]  =17,\\
\mu_{3}\left[  11,11,35\right]   &  =23,\mu_{3}\left[  11,11,17\right]
=5,\mu_{3}\left[  11,17,17\right]  =11,\mu_{3}\left[  11,17,23\right]
=17,\mu_{3}\left[  11,17,29\right]  =23,\\
\mu_{3}\left[  11,17,35\right]   &  =29,\mu_{3}\left[  11,17,23\right]
=17,\mu_{3}\left[  11,17,29\right]  =23,\mu_{3}\left[  11,17,35\right]
=29,\mu_{3}\left[  11,23,23\right]  =23,\\
\mu_{3}\left[  11,23,29\right]   &  =29,\mu_{3}\left[  11,23,23\right]
=23,\mu_{3}\left[  11,23,35\right]  =35,\mu_{3}\left[  11,29,29\right]
=35,\mu_{3}\left[  11,29,35\right]  =5,\\
\mu_{3}\left[  11,35,35\right]   &  =11,\mu_{3}\left[  17,17,17\right]
=17,\mu_{3}\left[  17,17,23\right]  =23,\mu_{3}\left[  17,17,29\right]
=29,\mu_{3}\left[  17,17,35\right]  =35,\\
\mu_{3}\left[  17,23,23\right]   &  =29,\mu_{3}\left[  17,23,29\right]
=35,\mu_{3}\left[  17,29,29\right]  =5,\mu_{3}\left[  17,29,35\right]
=11,\mu_{3}\left[  17,35,35\right]  =17,\\
\mu_{3}\left[  23,23,23\right]   &  =35,\mu_{3}\left[  23,23,29\right]
=5,\mu_{3}\left[  23,23,35\right]  =11,\mu_{3}\left[  23,29,29\right]
=11,\mu_{3}\left[  23,29,35\right]  =17,\\
\mu_{3}\left[  23,35,35\right]   &  =23,\mu_{3}\left[  29,29,29\right]
=17,\mu_{3}\left[  29,29,35\right]  =23,\mu_{3}\left[  29,35,35\right]
=29,\mu_{3}\left[  35,35,35\right]  =35.
\end{align*}
The multiplicative querelements are $\bar{5}=29,\overline{29}=5,\overline
{11}=23,\overline{23}=11$. Because%
\begin{align}
5^{\left\langle 1\right\rangle _{\times3}}  &  =17_{e},\ \ 5^{\left\langle
2\right\rangle _{\times3}}=29,\ \ 5^{\left\langle 3\right\rangle _{\times3}%
}=5,\ \ 29^{\left\langle 1\right\rangle _{\times3}}=17_{e}%
,\ \ 29^{\left\langle 2\right\rangle _{\times3}}=5,\ \ 29^{\left\langle
3\right\rangle _{\times3}}=29,\\
11^{\left\langle 1\right\rangle _{\times3}}  &  =35_{e},\ \ 11^{\left\langle
2\right\rangle _{\times3}}=23,\ \ 11^{\left\langle 3\right\rangle _{\times3}%
}=11,\ \ 23^{\left\langle 1\right\rangle _{\times3}}=35_{e}%
,\ \ 23^{\left\langle 2\right\rangle _{\times3}}=11,\ \ 23^{\left\langle
3\right\rangle _{\times3}}=23,
\end{align}
the multiplicative $3$-ary group $\mathcal{G}_{\left(  7,3\right)  }^{\left[
5,6\right]  }\left(  6\right)  $ consists of two \textsl{nonintersecting}
cyclic $3$-ary subgroups%
\begin{align}
\mathcal{G}_{\left(  7,3\right)  }^{\left[  5,6\right]  }\left(  6\right)   &
=\mathcal{G}_{1}\cup\mathcal{G}_{2},\ \ \ \mathcal{G}_{1}\cap\mathcal{G}%
_{2}=\varnothing,\label{gg}\\
\mathcal{G}_{1}  &  =\left\{  5,17_{e},29\right\}  ,\\
\mathcal{G}_{2}  &  =\left\{  11,23,35_{e}\right\}  ,
\end{align}
which is impossible for binary subgroups, as these always intersect in the
identity of the binary group.

\textbf{2)} The finite polyadic field $\mathbb{F}_{\left(  7,3\right)
}^{\left[  5,6\right]  }\left(  4\right)  =\left\{  \left\{
5,11,17,23\right\}  \mid\nu_{7},\mu_{3}\right\}  $ which has the same arity
shape as above, but with order $4$, has the exotic property that \textsl{all
elements }are units, which follows from its multiplication%
\begin{align*}
\mu_{3}\left[  5,5,5\right]   &  =5,\mu_{3}\left[  5,5,11\right]  =11,\mu
_{3}\left[  5,5,17\right]  =17,\mu_{3}\left[  5,5,23\right]  =23,\mu
_{3}\left[  5,11,11\right]  =5,\\
\mu_{3}\left[  5,11,17\right]   &  =23,\mu_{3}\left[  5,11,23\right]
=17,\mu_{3}\left[  5,17,17\right]  =5,\mu_{3}\left[  5,17,23\right]
=11,\mu_{3}\left[  5,23,23\right]  =5,\\
\mu_{3}\left[  11,11,11\right]   &  =11,\mu_{3}\left[  11,11,17\right]
=17,\mu_{3}\left[  11,11,23\right]  =23,\mu_{3}\left[  11,17,17\right]
=11,\mu_{3}\left[  11,17,23\right]  =5,\\
\mu_{3}\left[  11,23,23\right]   &  =11,\mu_{3}\left[  17,17,17\right]
=17,\mu_{3}\left[  17,17,23\right]  =23,\mu_{3}\left[  17,23,23\right]
=17,\mu_{3}\left[  23,23,23\right]  =23.
\end{align*}

\textbf{3)} Next we show by construction, that (as opposed to the case of
binary finite fields) there exist \textsl{non-isomorphic} finite polyadic
fields of the same order and arity shape. Indeed, consider these two $\left(
9,3\right)  $-fields of order $2$, that are $\mathbb{F}_{\left(  9,3\right)
}^{\left[  3,8\right]  }\left(  2\right)  $ and $\mathbb{F}_{\left(
9,3\right)  }^{\left[  7,8\right]  }\left(  2\right)  $. The first is
zeroless-nonunital, while the second is zeroless with two units, i.e. all
elements are units. The multiplication of $\mathbb{F}_{\left(  9,3\right)
}^{\left[  3,8\right]  }\left(  2\right)  $ is%
\[
\mu_{3}\left[  3,3,3\right]  =11,\ \mu_{3}\left[  3,3,11\right]  =3,\ \mu
_{3}\left[  3,11,11\right]  =11,\ \mu_{3}\left[  11,11,11\right]  =3,
\]
having the multiplicative querelements $\bar{3}=11$, $\overline{11}=3$. For
$\mathbb{F}_{\left(  9,3\right)  }^{\left[  7,8\right]  }\left(  2\right)  $
we get the $3$-group of units%
\[
\mu_{3}\left[  7,7,7\right]  =7,\ \mu_{3}\left[  7,7,15\right]  =15,\ \mu
_{3}\left[  7,15,15\right]  =7,\ \mu_{3}\left[  15,15,15\right]  =15.
\]
They have different idempotence polyadic orders $\operatorname*{ord}%
\mathbb{F}_{\left(  9,3\right)  }^{\left[  3,8\right]  }\left(  2\right)  =2$
and $\operatorname*{ord}\mathbb{F}_{\left(  9,3\right)  }^{\left[  7,8\right]
}\left(  2\right)  =1$. Despite their additive $m$-ary groups being
isomorphic, it follows from the above multiplicative structure, that it is not
possible to construct an isomorphism between the fields themselves.

\textbf{4)} The smallest exotic finite polyadic field with more than one unit
is $\mathbb{F}_{\left(  4,3\right)  }^{\left[  2,3\right]  }\left(  5\right)
=\left\{  \left\{  2,5,8,11,14\right\}  \mid\nu_{4},\mu_{3}\right\}  $ of
order $5$ with two units $\left\{  11,14\right\}  \equiv\left\{  11_{e}%
,14_{e}\right\}  $ and the zero $5\equiv5_{z}$. The presence of zero allows us
to define the polyadic characteristic (\ref{ez}) which is $3$ (see
\textsc{Table \ref{T0}}), because the 3rd additive power of all elements is
equal to zero%
\begin{equation}
2^{\left\langle 3\right\rangle _{+4}}=8^{\left\langle 3\right\rangle _{+4}%
}=11_{e}^{\left\langle 3\right\rangle _{+4}}=14_{e}^{\left\langle
3\right\rangle _{+4}}=5_{z}.
\end{equation}
The additive querelements are%
\begin{equation}
\tilde{2}=11_{e},\ \ \tilde{8}=14_{e},\ \ \widetilde{11}_{e}=8,\ \ \widetilde
{14}_{e}=2.
\end{equation}
The idempotence polyadic order is $\operatorname*{ord}\mathbb{F}_{\left(
4,3\right)  }^{\left[  2,3\right]  }\left(  5\right)  =2$, because for nonunit
and nonzero elements%
\begin{equation}
2^{\left\langle 2\right\rangle _{\times3}}=2,\ \ 8^{\left\langle
2\right\rangle _{\times3}}=8,
\end{equation}
and their multiplicative querelements are $\bar{2}=8$, $\bar{8}=2$. The
multiplication is given by the cyclic $3$-ary group $\mathcal{G}_{3}^{\left[
2,3\right]  }\left(  4\right)  =\left\{  \left\{  2,8,11,14\right\}  \mid
\mu_{3}\right\}  $ as:%
\begin{align*}
\mu_{3}\left[  2,2,2\right]   &  =8,\mu_{3}\left[  2,2,8\right]  =2,\mu
_{3}\left[  2,2,11\right]  =14,\mu_{3}\left[  2,2,14\right]  =11,\mu
_{3}\left[  2,8,8\right]  =8,\\
\mu_{3}\left[  2,8,11\right]   &  =11,\mu_{3}\left[  2,8,14\right]
=14,\mu_{3}\left[  2,11,11\right]  =2,\mu_{3}\left[  2,11,14\right]
=8,\mu_{3}\left[  2,14,14\right]  =2,\\
\mu_{3}\left[  8,8,8\right]   &  =2,\mu_{3}\left[  8,8,11\right]  =14,\mu
_{3}\left[  8,8,14\right]  =11,\mu_{3}\left[  8,11,11\right]  =8,\mu
_{3}\left[  8,11,14\right]  =2,\\
\mu_{3}\left[  8,14,14\right]   &  =8,\mu_{3}\left[  11,11,11\right]
=11,\mu_{3}\left[  11,11,14\right]  =14,\mu_{3}\left[  11,14,14\right]
=11,\mu_{3}\left[  14,14,14\right]  =14.
\end{align*}

We observe that, despite having two units, the cyclic $3$-ary group
$\mathcal{G}_{3}^{\left[  2,3\right]  }\left(  4\right)  $ has no
decomposition into nonintersecting cyclic $3$-ary subgroups, as in (\ref{gg}).
One of the reasons is that the polyadic field $\mathbb{F}_{\left(  7,3\right)
}^{\left[  5,6\right]  }\left(  6\right)  $ is zeroless, while $\mathbb{F}%
_{\left(  4,3\right)  }^{\left[  2,3\right]  }\left(  5\right)  $ has a zero
(see \textbf{Assertion \ref{as-gg}}).

\newpage
\mbox{}
\vskip 1cm

\mbox{}
\bigskip

\listoftables

\end{document}